\documentclass[11pt]{article}
\usepackage[utf8]{inputenc}
\usepackage{amsmath,amssymb,amsthm,graphicx}
\usepackage[all]{xy}
\usepackage{xcolor}
\usepackage[top=3cm, bottom=3cm, left=4cm, right=4cm]{geometry}
\title{Toledo invariants of Topological Quantum Field Theories}


\newcommand{\Q}{\mathbb{Q}}
\newcommand{\R}{\mathbb{R}}
\newcommand{\C}{\mathbb{C}}
\newcommand{\Z}{\mathbb{Z}}
\newcommand{\N}{\mathbb{N}}
\renewcommand{\H}{\mathbb{H}}
\renewcommand{\P}{\mathbb{P}}
\newcommand{\A}{\mathcal{A}}
\newcommand{\M}{\mathcal{M}}
\newcommand{\V}{\mathcal{V}}

\renewcommand{\S}{\mathcal{S}}
\newcommand{\Mb}{\overline{\mathcal{M}}}
\newcommand{\T}{\mathcal{T}}
\newcommand{\AT}{\overline{\mathcal T}}
\newcommand{\Tb}{\overline{\mathcal{T}}}
\newcommand{\U}{\mathrm{U}}
\newcommand{\PGL}{\mathrm{PGL}}
\newcommand{\PU}{\mathrm{PU}}
\newcommand{\SU}{\mathrm{SU}}
\newcommand{\SO}{\mathrm{SO}}

\newcommand{\BPU}{\mathrm{BPU}}
\newcommand{\ab}{\mathrm{ab}}
\newcommand{\irr}{\mathrm{irr}}

\newcommand{\Mh}{^h\!\!\M}
\newcommand{\Mbh}{^h\!\!\overline{\M}}
\newcommand{\SL}{\mathrm{SL}}

\newcommand{\E}{\mathcal E}
\renewcommand{\epsilon}{\varepsilon}
\DeclareMathOperator{\id}{Id}
\DeclareMathOperator{\Stab}{Stab}

\DeclareMathOperator{\Mod}{Mod}
\DeclareMathOperator{\Aut}{Aut}
\DeclareMathOperator{\ch}{ch}
\DeclareMathOperator{\sch}{sch}
\DeclareMathOperator{\sign}{sign}
\DeclareMathOperator{\Hom}{Hom}
\DeclareMathOperator{\End}{End}

\DeclareMathOperator{\nodes}{Sing}

\DeclareMathOperator{\tr}{Tr}

\DeclareMathOperator{\ord}{ord}
\DeclareMathOperator{\str}{sTr}
\renewcommand{\phi}{\varphi}
\renewcommand{\tilde}{\widetilde}

\newtheorem{Theorem}{Theorem}
\newtheorem*{Theorem*}{Theorem}
\newtheorem{Proposition}{Proposition}
\newtheorem{Definition}{Definition}
\newtheorem{Lemma}{Lemma}
\newtheorem{Corollary}{Corollary}
\newtheorem*{Corollary*}{Corollary}
\newtheorem{Remark}{Remark}

\begin{document}
\date{}


\author{Bertrand Deroin and
Julien Marché
}

\newcommand{\Addresses}{{
  \bigskip
  \footnotesize

  B.~Deroin, \textsc{CNRS-Laboratoire AGM - Université de Cergy-Pontoise}\par\nopagebreak
  \textit{E-mail address},  \texttt{bertrand.deroin@univ-cyu.fr}

  \medskip

  J.~Marché, \textsc{IMJ-PRG - Sorbonne Universit\'e}\par\nopagebreak
  \textit{E-mail address},  \texttt{julien.marche@imj-prg.fr}

}}

\maketitle


\abstract{We prove that the Fibonacci quantum representations $\rho_{g,n}:\Mod_{g,n}\to \PU(p,q)$ for $(g,n)\in\{(0,4),(0,5),(1,2),(1,3),(2,1)\}$ are holonomy representations of complex hyperbolic structures on some compactifications of the corresponding moduli spaces $\M_{g,n}$. As a corollary, the forgetful map between the corresponding compactifications of \(\mathcal M_{1,3}\) and \(\mathcal M_{1,2}\) is a surjective holomorphic map between compact complex hyperbolic orbifolds of different dimensions higher than one, giving an answer to a problem raised by Siu. 

The proof consists in computing their Toledo invariants: we put this computation in a broader context, replacing the Fibonacci representations with any Hermitian modular functor and extending the Toledo invariant to a full series of cohomological invariants beginning with the signature $p-q$. 

We prove that these invariants satisfy the axioms of a Cohomological Field Theory and compute the $R$-matrix at first order (hence the usual Toledo invariants) in the case of the $\SU_2/\SO_3$-quantum representations at any level.}

\section{Introduction}

\subsection{Motivation} 
The moduli spaces \(\mathcal M_{g,n} \)  of genus \(g\) curves with \(n\) marked points, do not seem to have geometric structures in general, nor their partial compactifications. However, some very interesting curiosities happen in particular cases; leading examples of this kind are the compact type partial compactification of \(\mathcal M_2\) and \(\mathcal M_3\) that carry structures locally modelled on Siegel spaces (via the Jacobian of the curve), or the examples of complex hyperbolic structures on certain partial compactifications of \(\mathcal M_{0,n}\) with \(n\leq 8\), using hypergeometric integrals (see \cite{DM, Thurston} and \cite{McMullen} for further developments and a nice historical treatment to this topic). In all these examples, a key role is played by the holonomy of the geometric structure: a linear representation of the corresponding mapping class group \(\text{Mod} _{g,n} \).    

The original motivation of this work, that emerged while the second author lectured on TQFT in Bordeaux and Paris, see \cite{Bordeaux}, is to investigate whether quantum representations  provide interesting geometric structures on moduli spaces and/or  their partial compactifications: those are representations of the mapping class groups with values in the projective linear group of vector spaces called spaces of conformal blocks. They are associated to the data of a simple compact Lie group \(G\), a level \(\ell\) (a positive integer), and some finite set $\Lambda$ of irreducible representations of $G$ (depending on $\ell$). This theory is extremely rich and have various aspects, one is analytical, based on quantization of character varieties of surface groups, the other one is combinatorial/topological, based on a modular category (constructed from the representation theory of quantum groups or from the Kauffman bracket), we refer to \cite{Bakalov-Kirillov} for a general overview. While the two points of view are equivalent, we will follow the combinatorial/topological road here.

A crucial property, which has been made explicit using the topological viewpoint as in \cite{BHMV} is that the space of conformal blocks is defined over a cyclotomic field $k$ of order \(\ell\) and the image of a quantum representation takes values in the group of projective transformations that preserves a pseudo-hermitian form on the space of conformal blocks defined over \(k\) (they also preserve an integral structure, hence taking values in an arithmetic group, as was proved by Gilmer and Masbaum in \cite{GM}). An interesting consequence is that whence we fix an embedding \(i: k\rightarrow \C\), the representation  gives rise to a representation 
\begin{equation} \label{eq: QR} \rho_{g,n}^i(\lambda_1, \ldots, \lambda_n) : \text{Mod}_{g,n}  \rightarrow \text{PU} (p ,q)\end{equation}
where \(\lambda_1,\ldots,\lambda_n\in \Lambda\) and \(p, q\) are the integers (depending highly on all data) so that \(d=p+q\) is the dimension of the space of conformal blocks and \(\sigma=p-q\) is the signature of the hermitian form.

Denote by $\H^{p,q}$ the Hermitian symmetric space associated to $\PU(p,q)$. The question that motivates this work is: does \(\mathcal M_{g,n} \), or a partial compactification of it, carries a \(\mathbb H^{p,q}\)-structure whose holonomy is given by the quantum representation \eqref{eq: QR}? The existence of geometric structures modelled on other homogeneous spaces of \(\text{PU}(p,q)\) is not addressed here but is certainly very interesting.

It turns out that in general the dimension of $\H^{p,q}$, which is equal to the product \(pq\), is much larger than the dimension of the moduli space \(\mathcal M_{g,n} \), which is \(3g-3+n\). To our knowledge, the only quantum representations where the coincidence of dimension 
\[ pq=3g-3+ n>1\]
holds are Fibonacci representations: the quantum representations associated to the compact Lie group \(\text{SO} (3)\), with level  \(\ell =5\). Notice that we do not use the specific definition of level from conformal field theory: for us it will simply denote the order of the root of unity necessary to carry over the construction. 

Later in the introduction, we provide an elementary construction of Fibonacci representations for the reader which is not familiar with TQFT, but before doing so, we present the main result of the article, namely the computation of Toledo invariants of quantum representations, which can be performed for any Hermitian modular functor. The computation of these invariants in the perspective of geometrization of quantum representations is our fundamental tool, and in some special cases permits, thanks to Siu's rigidity theory, to overcome the lack of naturally defined period maps associated to quantum representations.

\subsection{Toledo invariants of Hermitian modular functors}

A fundamental property of quantum representations of level \(\ell\) is that they map any Dehn twist to an element of order \(\ell\) in the group \(\text{PU} (p,q)\). So they can be thought of as representations defined on the quotient $\Mod_{g,n}^\ell$ of the mapping class group by the group generated by \(\ell\)-th powers of Dehn twists. This group is the orbifold fundamental group of the compact orbifold \(\Mb_{g,n}^{\ell}\) obtained from Deligne-Mumford compactification \(\Mb_{g,n}\) of moduli space by twisting the complex structure along the boundary divisors of \(\Mb_{g,n}\), namely the map \(\Mb _{g,n}^{\ell}\rightarrow \Mb_{g,n}\) is a set-theoretic bijection but in orbifold charts, it ramifies at order \(\ell\) on the boundary divisors. In terms of stacks, this is the \(\ell\)-root stack ramifying along the boundary divisors : it has already been considered in the context of TQFT  \cite{Eyssidieux-Funar} and  in the context of $r$-spin structures, see \cite{Chiodo} Section 2.1. From all this, we can see quantum representations as representations defined on the fundamental group \(\pi_1(\Mb_{g,n}^{\ell})\). 

The main topological information that detects whether a representation \(\rho : \pi_1 (X) \rightarrow \text{PU} (p,q)\) defined on the fundamental group of a compact complex manifold is or isn't the holonomy of a \(\mathbb H^{p,q}\)-structure, is contained in a characteristic class belonging to the second rational cohomology of the manifold, called the Toledo invariant.  This class is the degree two part of a higher cohomology class that is defined in the following way: suppose \(\rho\) lifts to a representation with values in \( \text{U}(p,q)\), and take a decomposition of $\E$, the flat \( \C^{p,q}\)-bundle over \(X\) with monodromy \(\rho\) as an orthogonal sum \(\E=\E^+ \oplus \E^-\) of a positive rank \(p\) subbundle \(\E^+\) and a negative rank \(q\) subbundle \(\E^-\). Then the higher cohomology class is defined by 
\[ \sch(\rho) := \ch (\E^+ ) - \ch (\E^-) \in H^* (X, \mathbb Q)\]
where \( ch\) is the Chern character and $sch$ stands for {\em super/signed Chern character}. These invariants have been introduced in the context of Hermitian K-theory with applications to algebraic topology, see \cite{Mishchenko}. If \(X\) is an orbifold, we can still define a higher Toledo class in the cohomology of the underlying topological space of \( X\) with rational coefficients. In the case of a quantum representation \(\rho : \pi _1 (\Mb _{g,n} ^{\ell}) \rightarrow \text{PU} (p,q)\), we thus have a class \(\sch (\rho) \in H^* (\Mb_{g,n}, \mathbb Q)\simeq H^* (\Mb_{g,n}^{\ell}, \mathbb Q) \). 

Recall that the representation $\rho$ depends on $g,n$ of course, but also on the particular embedding $i:k\to \C$ and the colors $\lambda_1,\ldots,\lambda_n$ attached to the marked points. Setting $V=\Q[\Lambda]$, we define a multilinear map $\omega_{g,n}:V^n\to H^*(\Mb_{g,n},\Q)$ by putting 
$$\omega_{g,n}(\lambda_1,\ldots,\lambda_n)=\sch(\rho_{g,n}^i(\lambda_1,\ldots,\lambda_n)).$$

\begin{Theorem*}
For any hermitian modular functor, the family $\omega_{g,n}$ defined above satisfies the axioms of a Cohomological Field Theory (CohFT).
\end{Theorem*}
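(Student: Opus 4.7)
The plan is to check the three defining axioms of a CohFT in turn: $S_n$-equivariance of $\omega_{g,n}$ in the labels, compatibility with the two families of clutching (boundary) maps
\[
\xi_{\mathrm{sep}}:\Mb_{g_1,n_1+1}\times \Mb_{g_2,n_2+1}\longrightarrow \Mb_{g,n},\qquad \xi_{\mathrm{nsep}}:\Mb_{g-1,n+2}\longrightarrow \Mb_{g,n},
\]
and the unit axiom for the trivial label $\mathbf{1}\in\Lambda$. The strategy is, for each axiom, to lift the statement to a property of the underlying flat pseudo-hermitian bundle $\E_{g,n}(\lambda_1,\ldots,\lambda_n)$ of conformal blocks on the orbifold $\Mb_{g,n}^{\ell}$, derive it from a defining compatibility of the Hermitian modular functor, and then apply $\sch$. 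The two formal properties of the super Chern character that we need are additivity on orthogonal direct sums and multiplicativity on external tensor products, $\sch(E\boxtimes F)=\sch(E)\times \sch(F)$; the latter comes from the natural $\Z/2$-grading decomposition $(E\otimes F)^+=E^+\otimes F^+\oplus E^-\otimes F^-$ on a tensor product of pseudo-hermitian spaces.

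The $S_n$-equivariance is essentially tautological, because the modular functor supplies canonical unitary isomorphisms between the pseudo-hermitian bundles corresponding to permuted labels. The gluing axioms constitute the substantive step: the factorization axiom of the Hermitian modular functor translates, at the level of flat bundles, into an orthogonal decomposition
\[
\xi_{\mathrm{sep}}^{*}\,\E_{g,n}(\lambda_1,\ldots,\lambda_n)\;\cong\;\bigoplus_{\mu\in\Lambda}\E_{g_1,n_1+1}(\ldots,\mu)\;\boxtimes\;\E_{g_2,n_2+1}(\mu^{*},\ldots)
\]
of pseudo-hermitian flat bundles, and analogously for $\xi_{\mathrm{nsep}}$. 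Applying $\sch$ and invoking its additive and multiplicative properties yields exactly the CohFT gluing identities, with the metric on $V=\Q[\Lambda]$ being the pairing $\langle\lambda,\mu\rangle=\delta_{\lambda,\mu^{*}}$ induced by the duality involution on $\Lambda$. The unit axiom follows from the ``trivial insertion'' property of modular functors: the bundle obtained by adding a marked point labeled by $\mathbf{1}$ is canonically isomorphic to the pullback along the forgetful map $\pi:\Mb_{g,n+1}\to\Mb_{g,n}$, so that $\omega_{g,n+1}(\lambda_1,\ldots,\lambda_n,\mathbf{1})=\pi^{*}\omega_{g,n}(\lambda_1,\ldots,\lambda_n)$, together with the computation of $\omega_{0,3}$ in genus zero with three punctures producing the prescribed metric.

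The principal obstacle is the bookkeeping at the level of root stacks. Since the quantum representation is only defined on $\pi_1(\Mb_{g,n}^{\ell})$, one has to verify that the clutching maps lift compatibly to the $\ell$-th root stacks and that the factorization isomorphism of the modular functor — canonically defined over Teichm\"uller space — descends coherently to those orbifold covers while respecting the $\ell$-twisting along the boundary divisors. This is essentially a consistency check on the way local ramification orders match across the nodal strata. Once this is in place, the identification $H^{*}(\Mb_{g,n}^{\ell},\Q)\cong H^{*}(\Mb_{g,n},\Q)$ is compatible with all clutching maps, and the CohFT axioms drop out from the chain of reductions above.
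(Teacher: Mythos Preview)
Your overall architecture is the same as the paper's: reduce each CohFT axiom to a compatibility of the modular functor, then invoke additivity and multiplicativity of $\sch$. But there is a genuine error in your identification of the pairing on $V$, and correspondingly in your factorization isomorphism.

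The bilinear form is not $\langle\lambda,\mu\rangle=\delta_{\lambda,\mu^*}$ but $\eta(\lambda,\mu)=\epsilon_\lambda\,\delta_{\lambda,\mu^*}$, where $\epsilon_\lambda\in\{\pm1\}$ is the sign of the Hermitian form on the one-dimensional space $\mathcal V(S^2,\lambda,\lambda^*)$. The gluing axiom MF2 of a Hermitian modular functor reads
\[
\mathcal V(S_\pm,\lambda)\;\simeq\;\bigoplus_{\mu\in\Lambda}\mathcal V(S,\mu,\mu^*,\lambda)\otimes\mathcal V(S^2,\mu,\mu^*),
\]
and the one-dimensional tensor factor $\mathcal V(S^2,\mu,\mu^*)$ carries a Hermitian form of sign $\epsilon_\mu$. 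When you apply $\sch$, multiplicativity produces a factor $\sch(\mathcal V(S^2,\mu,\mu^*))=\epsilon_\mu$ in each summand, so the correct gluing identity is
\[
\xi^*\omega_{g,n}(\ldots)=\sum_{\mu\in\Lambda}\epsilon_\mu\,\omega(\ldots,\mu)\otimes\omega(\mu^*,\ldots),
\]
not the unsigned version you wrote. These signs are not cosmetic: in the Fibonacci functor at $q=e^{2i\pi/5}$ one has $\epsilon_0=+1$ and $\epsilon_2=-1$, and the entire signature Frobenius algebra (the degree-zero part of the CohFT) depends on them. With your pairing the degree-zero TFT would compute dimensions rather than signatures.

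A secondary point you gloss over: the representations are only projective, so there is no honest flat Hermitian bundle $\E_{g,n}$ on which to run ``additivity on orthogonal direct sums''. One cannot add projective representations. The paper defines $\sch$ directly as a class in $H^*(\mathrm{BPU}(p,q),\Q)$ and then proves additivity for representations landing in the intermediate group $\mathrm P(\U(V)\times\U(W))$, which is exactly what the factorization provides. Your bundle-theoretic phrasing hides this step; it is repairable, but it has to be said.
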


A particular interesting instance of this is that the degree \(0\) part of this CohFT, namely the signature \(\sigma\), defines on $V$ a structure of Frobenius algebra which have not been studied before as far as we know. We prove that for the $\SU_2/\SO_3$-TQFTs at any level, these $\Q$-algebras are semi-simple. 

To illustrate this theorem, consider the example of the Fibonacci representation where $k=\Q(q)$ and $q^5=1$. 
In this case, $\Lambda$ has two elements called the trivial color and the non-trivial color and $V=\Q[\Lambda]$ is a quadratic number field. 

When $i(q)=e^{4i\pi/5}$, the representation $\rho^i_{g,n}$ becomes unitary, hence the higher cohomological invariants vanish: we have $\sch(\rho^i_{g,n})=d_{g,n}$, the dimension of the representation. Then $V=\Q(\phi)$ as an algebra where $\phi=-q-q^{-1}$ is the golden ratio and 
$$ d_{g,n}=\tr_{\Q(\phi)/\Q}(\phi^n(2+\phi)^{g-1}).$$

When $i(q)=e^{2i\pi/5}$, the representation $\rho^i_{g,n}$ is no longer unitary. We have in this case $V=\Q(j)$ where $j^2+j+1=0$ and the signature of the representation is 
$$\sigma_{g,n}=\tr_{\Q(j)/\Q}(j^n(2+j)^{g-1}).$$

We collect in the following table the complete signature $p|q$ of $(V,h)$:

\vspace{0,5cm}

\begin{tabular}{|c|c|c|c|c|c|c|c|}
\hline
&$n=0$& $n=1$ & $n=2$ & $n=3$ & $n=4$ & $n=5$ & $n=6$\\
\hline
$g=0$ & $1|0$ & $0|0$ & $0|1$ & $1|0$ & $1|1$ & $1|2$ & $3|2$ \\ 
\hline
$g=1$ & $2|0$ & $0|1$ & $1|2$ & $3|1$ & $3|4$ & $5|6$ & $10|8$ \\
\hline
$g=2$ & $4|1$ & $1|4$ & $5|5$ & $9|6$ & $11|14$ & $20|20$ & $34|31$ \\
\hline
$g=3$ & $9|6$ & $7|13$ & $19|16$ & $29|26$ & $42|48$ & $74|71$ & $119|116$\\
\hline
\end{tabular}
\vspace{0,5cm}

This table makes clearer the terminology Fibonacci. We will provide explicit formulas for the three Frobenius algebras arising at level $7$. At prime level $\ell$, these algebras seem to be particularly interesting number fields and deserve a further study. They also provide a conceptual explanation to a phenomenon observed by Funar, Pitsch and Costantino see \cite{FP}.

The general theorem opens the door to a computation of the cohomological invariants using the Givental-Teleman classification theorem, see \cite{Pandharipande}. In particular, we give in this article an algorithm for computing the $R$-matrix at first order that we implemented with Sage. The computation in the Fibonacci case $(\ell=5)$ can be performed by hand and is already much more complicated when $\ell=7$. 

The computation of this $R$-matrix reduces to the computation of Toledo invariants of representations of triangle groups in $\PU(p,q)$. We provide in Appendix \ref{Meyer} a formula generalizing Meyer's formula for the signature of $4$-manifolds which reduces the computation to the signature of some explicit Hermitian matrices. 

These CohFTs look particularly interesting: it seems difficult to compute the $R$-matrices at higher order or to find the spectral curve encoding it through Topological Recursion. Notice that CohFTs already appeared in the context of modular functors in \cite{MOPPZ,ABO} where the authors computed the Chern character of the vector bundle of conformal blocks over $\Mb_{g,n}$. 
We stress that our construction is indeed different as it highly relies on the Hermitian structure, which plays no role in the aforementioned articles.

\subsection{Interlude: a quick construction of Fibonacci representations}

We sketch here a construction which is detailed in \cite{Bordeaux} for the case of $\SU_2$-modular functors. The Fibonacci case which is treated here is indeed different, but the proofs are similar. We include it so that the unfamiliar reader get a flavour of it: we refer to \cite{BHMV} for a full account of these constructions. 

Let $S$ be a surface of genus $g$ and $P\subset S$ be a finite subset of punctures. We set $k=\Q(q)$ to be the cyclotomic field of order $5$ where $q^5=1$ and define the elements $\phi=-q-q^{-1}$ and $A=-q^3$. 

We define $\A(S,P)$ as the $k$-vector space generated by isotopy classes of finite graphs $G$ embedded in $S\setminus P$ (or equivalently $1$-dimensional sub-cell-complexes) up to the following five local moves. 

\begin{enumerate}
    \item Contraction-Deletion relation: $[G/e]=[G]+[G\setminus e]$.
    \begin{figure}[htbp]
    \centering
    \def\svgwidth{7cm}
\begingroup%
  \makeatletter%
  \providecommand\color[2][]{%
    \errmessage{(Inkscape) Color is used for the text in Inkscape, but the package 'color.sty' is not loaded}%
    \renewcommand\color[2][]{}%
  }%
  \providecommand\transparent[1]{%
    \errmessage{(Inkscape) Transparency is used (non-zero) for the text in Inkscape, but the package 'transparent.sty' is not loaded}%
    \renewcommand\transparent[1]{}%
  }%
  \providecommand\rotatebox[2]{#2}%
  \newcommand*\fsize{\dimexpr\f@size pt\relax}%
  \newcommand*\lineheight[1]{\fontsize{\fsize}{#1\fsize}\selectfont}%
  \ifx\svgwidth\undefined%
    \setlength{\unitlength}{285.63540481bp}%
    \ifx\svgscale\undefined%
      \relax%
    \else%
      \setlength{\unitlength}{\unitlength * \real{\svgscale}}%
    \fi%
  \else%
    \setlength{\unitlength}{\svgwidth}%
  \fi%
  \global\let\svgwidth\undefined%
  \global\let\svgscale\undefined%
  \makeatother%
  \begin{picture}(1,0.44103904)%
    \lineheight{1}%
    \setlength\tabcolsep{0pt}%
    \put(0,0){\includegraphics[width=\unitlength,page=1]{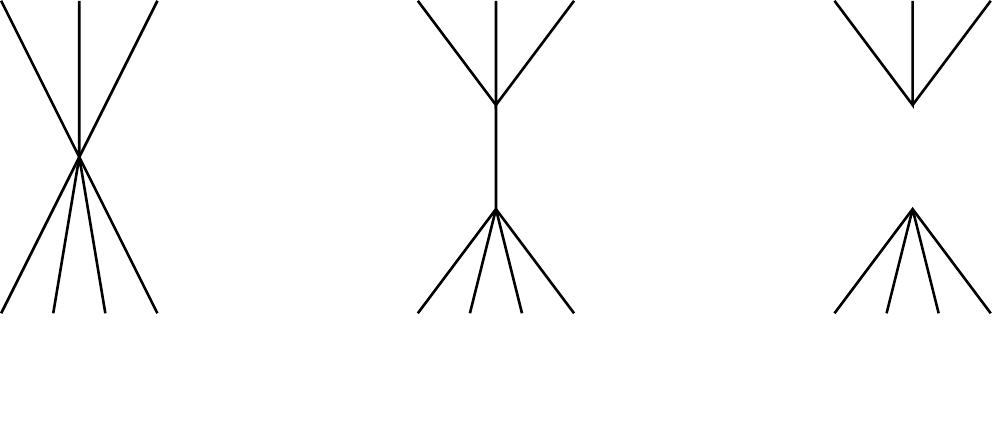}}%
    \put(0.02434753,0.05){\color[rgb]{0,0,0}\makebox(0,0)[lt]{\lineheight{1.25}\smash{\begin{tabular}[t]{l}$G/e$\end{tabular}}}}%
    \put(0.49574163,0.05){\color[rgb]{0,0,0}\makebox(0,0)[lt]{\lineheight{1.25}\smash{\begin{tabular}[t]{l}$G$\end{tabular}}}}%
    \put(0.85809198,0.05){\color[rgb]{0,0,0}\makebox(0,0)[lt]{\lineheight{1.25}\smash{\begin{tabular}[t]{l}$G\setminus e$\end{tabular}}}}%
    \put(0,0){\includegraphics[width=\unitlength,page=2]{contraction-deletion.pdf}}%
    \put(0.46821501,0.27){\color[rgb]{0,0,0}\makebox(0,0)[lt]{\lineheight{1.25}\smash{\begin{tabular}[t]{l}$e$\end{tabular}}}}%
    \put(0,0){\includegraphics[width=\unitlength,page=3]{contraction-deletion.pdf}}%
  \end{picture}%
\endgroup%

\end{figure}
    \item Fusion relation: $[G_\times]=\phi^{-1}[G_{||}]+\phi^{-1}[G_{=}]$.
    \begin{figure}[htbp]
    \centering
    \def\svgwidth{9cm}
\begingroup%
  \makeatletter%
  \providecommand\color[2][]{%
    \errmessage{(Inkscape) Color is used for the text in Inkscape, but the package 'color.sty' is not loaded}%
    \renewcommand\color[2][]{}%
  }%
  \providecommand\transparent[1]{%
    \errmessage{(Inkscape) Transparency is used (non-zero) for the text in Inkscape, but the package 'transparent.sty' is not loaded}%
    \renewcommand\transparent[1]{}%
  }%
  \providecommand\rotatebox[2]{#2}%
  \newcommand*\fsize{\dimexpr\f@size pt\relax}%
  \newcommand*\lineheight[1]{\fontsize{\fsize}{#1\fsize}\selectfont}%
  \ifx\svgwidth\undefined%
    \setlength{\unitlength}{762.81943013bp}%
    \ifx\svgscale\undefined%
      \relax%
    \else%
      \setlength{\unitlength}{\unitlength * \real{\svgscale}}%
    \fi%
  \else%
    \setlength{\unitlength}{\svgwidth}%
  \fi%
  \global\let\svgwidth\undefined%
  \global\let\svgscale\undefined%
  \makeatother%
  \begin{picture}(1,0.19642863)%
    \lineheight{1}%
    \setlength\tabcolsep{0pt}%
    \put(0,0){\includegraphics[width=\unitlength,page=1]{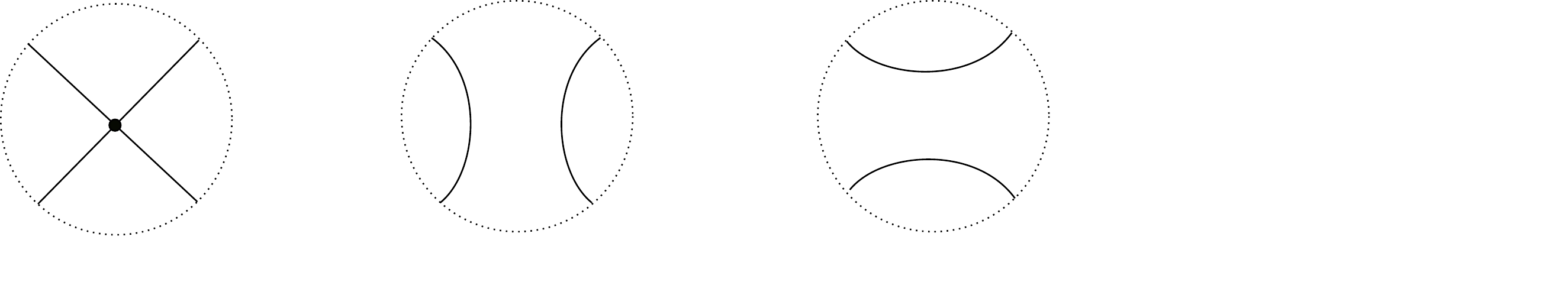}}%
    \put(0.02637126,0.00936214){\color[rgb]{0,0,0}\makebox(0,0)[lt]{\lineheight{1.25}\smash{\begin{tabular}[t]{l}$G_\times$\end{tabular}}}}%
    \put(0.30308046,0.00872734){\color[rgb]{0,0,0}\makebox(0,0)[lt]{\lineheight{1.25}\smash{\begin{tabular}[t]{l}$G_{||}$\end{tabular}}}}%
    \put(0.56455419,0.00809255){\color[rgb]{0,0,0}\makebox(0,0)[lt]{\lineheight{1.25}\smash{\begin{tabular}[t]{l}$G_{=}$\end{tabular}}}}%
    \put(0,0){\includegraphics[width=\unitlength,page=2]{fusion.pdf}}%
    \put(0.82671245,0.00752755){\color[rgb]{0,0,0}\makebox(0,0)[lt]{\lineheight{1.25}\smash{\begin{tabular}[t]{l}$G_{c}$\end{tabular}}}}%
  \end{picture}%
\endgroup%

\end{figure}
    \item Bridge and loop relation: see Figure \ref{fig:pont-boucle}.
    \begin{figure}[htbp]
    \centering
    \def\svgwidth{7cm}
\begingroup%
  \makeatletter%
  \providecommand\color[2][]{%
    \errmessage{(Inkscape) Color is used for the text in Inkscape, but the package 'color.sty' is not loaded}%
    \renewcommand\color[2][]{}%
  }%
  \providecommand\transparent[1]{%
    \errmessage{(Inkscape) Transparency is used (non-zero) for the text in Inkscape, but the package 'transparent.sty' is not loaded}%
    \renewcommand\transparent[1]{}%
  }%
  \providecommand\rotatebox[2]{#2}%
  \newcommand*\fsize{\dimexpr\f@size pt\relax}%
  \newcommand*\lineheight[1]{\fontsize{\fsize}{#1\fsize}\selectfont}%
  \ifx\svgwidth\undefined%
    \setlength{\unitlength}{856.9992522bp}%
    \ifx\svgscale\undefined%
      \relax%
    \else%
      \setlength{\unitlength}{\unitlength * \real{\svgscale}}%
    \fi%
  \else%
    \setlength{\unitlength}{\svgwidth}%
  \fi%
  \global\let\svgwidth\undefined%
  \global\let\svgscale\undefined%
  \makeatother%
  \begin{picture}(1,0.19645565)%
    \lineheight{1}%
    \setlength\tabcolsep{0pt}%
    \put(0,0){\includegraphics[width=\unitlength,page=1]{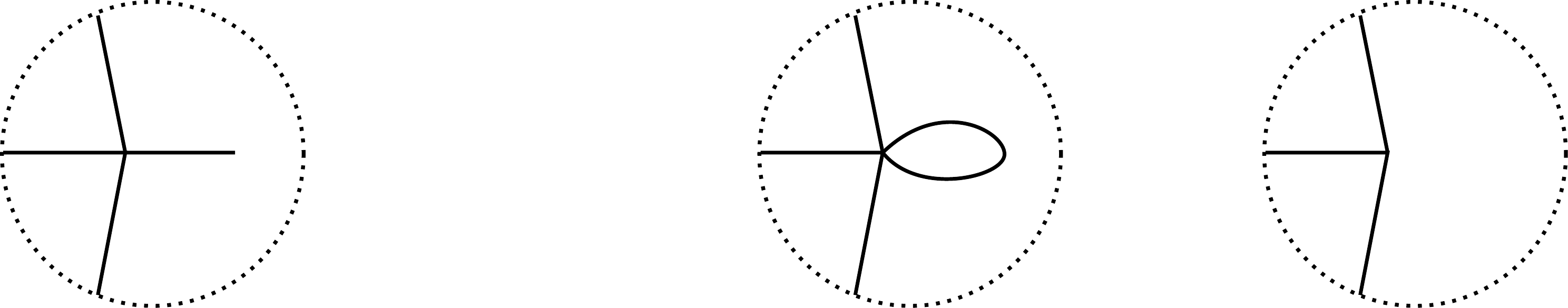}}%
    \put(0.21985781,0.08564706){\color[rgb]{0,0,0}\makebox(0,0)[lt]{\lineheight{1.25}\smash{\begin{tabular}[t]{l}$=0$\end{tabular}}}}%
    \put(0.69247529,0.08564706){\color[rgb]{0,0,0}\makebox(0,0)[lt]{\lineheight{1.25}\smash{\begin{tabular}[t]{l}$=\phi$\end{tabular}}}}%
    \put(0,0){\includegraphics[width=\unitlength,page=2]{pont-boucle.pdf}}%
  \end{picture}%
\endgroup%

    \caption{Bridge and loop relations.}
    \label{fig:pont-boucle}
\end{figure}
    \item Boundary relation: $\gamma=1-\phi$ for all curves $\gamma$ surrounding a puncture $x\in P$, that is $\gamma=\partial D^2$ where $D^2\subset S$ and $D^2\cap P=\{x\}$.
\end{enumerate}
One can easily prove that this vector space is finite dimensional and carries an action of the mapping class group $\Mod(S,P)$ by the formula $[f].[G]=[f(G)]$. 

Even more, $\A(S,P)$ has an algebra structure given by ``stacking'' and which is formally defined in the following way. Let $G_1,G_2$ be two graphs embedded in $S\setminus P$. We can ensure by an isotopy that they intersect tranversally in a finite number of points. For any $\xi:G_1\cap G_2\to \{\pm 1\}$ we define the smoothing $G_1\cup_\xi G_2$ by replacing the neighborhood of each intersection point $p\in G_1\cap G_2$ by a diagram where $G_1$ turns lefts to $G_2$ at $p$ if $\xi(p)=1$, right if $\xi(p)=-1$. 
We set then 
$$[G_1][G_2]=\sum_{\xi:G_1\cap G_2\to\{\pm 1\}} A^{\sum_{p}\xi(p)} [G_1\cup_\xi G_2].$$
One can then prove that this product induces a well-defined structure of algebra on $\A(S,P)$ which is preserved by $\Mod(S,P)$. 

Traditionally, this algebra structure is described with skein modules: we put $G_1$ ``above'' $G_2$ and apply at each crossing the Kauffman relation $[G_c]=A [G_{||}]+A^{-1}[G_{=}]$. The construction of the Fibonacci representation reduces to the following structure theorem:

\begin{Theorem*}
The algebra $\A(S,P)$ is isomorphic to $\End(V)$ for some finite dimensional $k$-vector space $V$. 
\end{Theorem*}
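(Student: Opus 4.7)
\textit{Proof sketch.} The plan is to realize $V$ as a handlebody skein module, let $\A(S,P)$ act on it by pushing graphs through a collar, and then identify the algebra with $\End(V)$ by combining injectivity of this action with a spine-based dimension count. Fix a handlebody $H$ with $\partial H = S$ and with the punctures $P$ placed on $\partial H$, and define $V = V(H,P)$ as the $k$-vector space of finite graphs embedded in $H$ that meet $\partial H$ exactly at $P$, modulo the $3$-dimensional versions of the local relations (1)--(4) together with the Kauffman crossing relation $[G_c] = A[G_{||}] + A^{-1}[G_=]$ at every transverse self-intersection. Given $G \subset S \setminus P$ and $v \in V$, we push $G$ into a collar $S \times [0,\epsilon] \subset H$ of $\partial H$ and take the union with a representative of $v$: this defines $\Phi([G])(v) \in V$. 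The formula $[G_1][G_2] = \sum_\xi A^{\sum \xi(p)} [G_1 \cup_\xi G_2]$ for the stacking product is precisely what one obtains by pushing $G_1$ above $G_2$ in the collar and resolving the resulting transverse crossings via the Kauffman relation, so $\Phi : \A(S,P) \to \End(V)$ is a well-defined $k$-algebra homomorphism.

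Next, finite dimensionality on both sides follows from a spine reduction. Fix a trivalent spine $\Gamma \subset H$; successive applications of (1) and (2) reduce every graph in $H$ to one supported on $\Gamma$ with edges colored by $\Lambda = \{0,1\}$, while (3) and (4) impose an admissibility condition at each trivalent vertex. This yields a finite spanning set of $V$, hence $\dim_k V < \infty$. A completely analogous reduction applied to the thickened surface $S \times [0,1]$, viewed as two copies of $H$ glued along a spine of $S$, produces a spanning set of $\A(S,P)$ indexed by pairs of admissible colorings of the two halves, giving $\dim_k \A(S,P) \leq (\dim_k V)^2$.

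The heart of the proof is to construct a non-degenerate pairing that identifies $V$ with its dual and forces $\Phi$ to be injective. Let $\overline{H}$ be a second copy of $H$ with the opposite orientation, form the closed $3$-manifold $M = H \cup_S \overline{H}$, and observe that the same five relations reduce any graph in $M$ to a scalar multiple of the empty graph, so the skein space of $M$ is $1$-dimensional over $k$. The gluing therefore defines a bilinear pairing $\langle \cdot, \cdot \rangle : V \otimes V(\overline{H}, P) \to k$, and twisting the gluing by $x \in \A(S,P)$ yields $(v,w) \mapsto \langle \Phi(x)(v), w \rangle$. Expanding both $V$ and $V(\overline{H},P)$ in their spine bases, one computes the matrix of $\langle\cdot,\cdot\rangle$ by evaluating the resulting closed graphs in $M$ with the relations (1)--(4); the calculation shows it is block-diagonal with non-zero entries expressed as products of quantum dimensions $\phi = -q - q^{-1}$, which is a unit of $k$. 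Hence the pairing is perfect, $V(\overline{H},P) \simeq V^*$, and $\Phi$ is injective. Combined with $\dim_k \A(S,P) \leq \dim_k \End(V)$ from the previous step, $\Phi$ must be an isomorphism.

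The principal obstacle is the non-degeneracy of this handlebody pairing: it is where the specific Fibonacci parameters genuinely enter, since the invertibility of the quantum dimension $\phi$ in $k = \Q(q)$ is precisely what prevents the algebra from degenerating and distinguishes the Fibonacci theory from other choices of parameters. Everything else in the argument is either formal (construction of $\Phi$) or a direct spine reduction using the five defining relations.
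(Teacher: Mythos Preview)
The paper does not actually prove this theorem: it is stated in the interlude as a structure result, with the remark that $V$ ``can be constructed in a way similar to $\A(S,P)$ from a handlebody bounding the surface $S$'' and a reference to \cite{Bordeaux} and \cite{BHMV} for details. Your sketch follows exactly this hint and is the standard skein-theoretic route, so in spirit you are aligned with what the authors have in mind.

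That said, two steps in your sketch are genuinely problematic and would need to be replaced. First, the sentence ``the thickened surface $S\times[0,1]$, viewed as two copies of $H$ glued along a spine of $S$'' is topologically false: $S\times[0,1]$ has disconnected boundary $S\sqcup S$, while any gluing of two handlebodies along a subset of their boundaries produces something with connected (or empty) boundary. The inequality $\dim_k\A(S,P)\le(\dim_k V)^2$ is correct, but it comes instead from reducing graphs in $S\times[0,1]$ to a fixed trivalent spine of $S$ with a pair of admissible colorings (one for each side of each edge), or equivalently from the factorization/gluing properties once the pairing is in place; it does not come from a handlebody decomposition of $S\times[0,1]$.

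Second, and more seriously, the claim that the skein space of $M=H\cup_S\overline{H}$ is $1$-dimensional is doing a lot of hidden work. Gluing by the identity gives $M\cong\#_g(S^1\times S^2)$, and the assertion that ``the same five relations reduce any graph in $M$ to a scalar multiple of the empty graph'' is essentially the well-definedness of the Reshetikhin--Turaev $3$-manifold invariant at this root of unity. This is true for the Fibonacci parameters, but it is not a consequence of the local relations alone: one needs either the handle-slide and Kirby-move invariance arguments of \cite{BHMV}, or equivalently the modularity of the underlying category. In practice the references you would be sent to (\cite{BHMV}, \cite{Bordeaux}) establish non-degeneracy of the pairing by an explicit Gram-matrix computation in the spine basis together with these invariance results, rather than by asserting $1$-dimensionality of an arbitrary closed skein module. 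Your identification of the non-degeneracy as ``the principal obstacle'' is correct; it is just deeper than your sketch suggests.
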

Let $\Phi:\A(S,P)\to \End(V)$ be such an (non canonical) isomorphism. As $\Mod(S,P)$ acts  on $\A(S,P)$ by algebra automorphisms, the Skolem-Noether theorem implies that this action is given through $\Phi$ by a conjugation, hence defining the Fibonacci representation $\rho:\Mod(S,P)\to \PGL(V)$ such that 
$$\Phi(f.x)=\rho(f)\Phi(x)\rho(f)^{-1}\quad\forall f\in \Mod(S,P),\forall x\in \A(S,P).$$
Finally, let $x\mapsto \overline{x}$ be the involution of $k$ satisfying $\overline{q}=q^{-1}$. This extends uniquely to an anti-involution of $\A(S,P)$ given by $\lambda[G]\mapsto \overline{\lambda}[G]$ for any embedded graph $G\subset S\setminus P$. This anti-involution corresponds through $\Phi$ to the adjunction with respect to a Hermitian form $h$ on $V$ preserved by $\Mod(S,P)$. In formulas, $h(\Phi(x)v,w)=h(v,\Phi(\overline{x})w)$ for all $v,w\in V$.

The involution being preserved by $\Mod(S,P)$, the Fibonacci representation is promoted to a representation $\rho:\Mod(S,P)\to \PU(V)$. 

To actually work with this construction, we need to find an explicit model for $V$: it can be constructed in a way similar to $\A(S,P)$ from a handlebody bounding the surface $S$, we refer to \cite{Bordeaux} for details. 

\subsection{Geometrization: cabinet de curiosités}

As the reader can check in the signature table of Fibonacci representations,  the coincidence of dimension \( pq = 3g-3+n\) happens only in few cases that we list here 
\begin{equation} \label{eq: coincidences} (g,n) \in \{ (0,4), \ (0,5), \ (1,2),\ (1,3), \ (2,1) \}  \end{equation} 
We prove that each of these coincidences correspond to a genuine complex hyperbolic structure on some compactification of the corresponding moduli space. 

It turns out that in the Fibonacci case \(\ell = 5\), there exists an orbifold contraction \( \Mb _{g,n}^5\rightarrow \Mb_{g,n} ^{\E}\), which contracts the boundary divisor consisting of stable nodal curves having at least one elliptic tail. This contraction was already considered  in the PhD dissertation  of Livne \cite{Livne} in the case of \( (g,n)=(1,2)\), but we provide a generalization of this result for any \( (g,n)\neq (2,0)\), see  \ref{ss: construction elliptic tail contraction}  (in the case \((g,n)=(2,0)\), the contraction leads to a quadratic singularity that will be studied in a forthcoming paper). This elliptic contraction, which might be interesting in its own, is very well adapted to the study of Fibonacci representations, since \(\pi_1 (\Mb _{g,n}^5)\simeq \pi_1( \Mb_{g,n} ^{\E})\). 
We prove (see the combination of Propositions \ref{p: uniformization_0_5}, \ref{p:M12}, \ref{p:M13} and \ref{p:M21})

\begin{Theorem*}\label{t: curiosities} In each of the cases \eqref{eq: coincidences}, the elliptic tail contraction \(\Mb_{g,n} ^{\E}\) admits a complex hyperbolic structure whose holonomy is the corresponding Fibonacci quantum representation.  \end{Theorem*}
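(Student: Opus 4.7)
The plan is to treat the five cases listed in \eqref{eq: coincidences} individually, using the Toledo invariant computation provided by the Cohomological Field Theory machinery of the preceding sections as the main piece of numerical input. In each case the goal is to exhibit an explicit complex hyperbolic orbifold structure on $\Mb_{g,n}^{\E}$ and to identify its holonomy with the Fibonacci representation $\rho_{g,n}$.

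First, for each of the five cases I would apply the CohFT computation to obtain the class $\sch(\rho_{g,n})\in H^{*}(\Mb_{g,n},\Q)$ and extract its degree-two part, the honest Toledo class $\tau$. The crucial check is that $\tau$ saturates a Milnor--Wood type inequality on $\Mb_{g,n}^{\E}$, so that it is proportional to the pullback of the universal K\"ahler class of $\H^{p,q}$ with the correct normalization. Because the Fibonacci representation sends fifth powers of Dehn twists to the identity, it descends to $\pi_{1}(\Mb_{g,n}^{\E})$, and the monodromies around the residual boundary divisors are complex reflections of order $5$, which is exactly the local behavior required for a complex hyperbolic orbifold structure.

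Second, since $\Mb_{g,n}^{\E}$ is a compact K\"ahler orbifold and $\PU(p,q)$ acts by isometries on the non-positively curved Hermitian symmetric space $\H^{p,q}$, a Corlette--Eells--Sampson type existence theorem produces an equivariant harmonic map from the universal cover of $\Mb_{g,n}^{\E}$ to $\H^{p,q}$. The extremality of the Toledo invariant together with Siu's rigidity then forces this harmonic map to be holomorphic (or antiholomorphic), and the dimension equality $pq=\dim_{\C}\Mb_{g,n}^{\E}$ upgrades holomorphicity to a local biholomorphism on a Zariski open set, producing the desired $\H^{p,q}$-structure. The identification case by case proceeds by matching with known constructions: $(0,4)$ and $(0,5)$ with suitable Deligne--Mostow hypergeometric orbifolds, $(1,2)$ with Livn\'e's construction, and $(1,3)$ and $(2,1)$ by combining the elliptic tail contraction with the forgetful maps to the lower-genus cases already settled.

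The main obstacle is expected to be verifying the extremality of $\tau$ on $\Mb_{g,n}^{\E}$: one must compare the degree-two component of the CohFT output with the hypothetical K\"ahler form of a complex hyperbolic metric with the correct proportionality constant, and do so equivariantly on the orbifold, including the elliptic tail boundary. A secondary difficulty is controlling the developing map across the non-elliptic-tail boundary divisors of $\Mb_{g,n}^{\E}$ to produce a complete complex hyperbolic orbifold structure; this amounts to checking that the degeneration of the harmonic map along the nodal strata is precisely what is resolved by the contraction $\Mb_{g,n}^{5}\to\Mb_{g,n}^{\E}$, so that no further contractions or modifications are needed.
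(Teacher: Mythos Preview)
Your plan is broadly the paper's plan: compute $\tau=\sch_1(\rho_{g,n})$ via the CohFT, produce a $\rho$-equivariant harmonic map, and invoke Siu's rigidity. Two points, however, need sharpening.

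First, the numerical criterion is not a Milnor--Wood saturation in any abstract sense; it is the concrete equality
\[
\tau(\rho_{g,n}) \;=\; \frac{2}{q+1}\, c_1(K_{\Mb_{g,n}^{\E}})
\]
in $H^2(\Mb_{g,n},\Q)$. Establishing this requires (i) the explicit CohFT output of Section~\ref{ss: R-matrix level 5}, written in the basis $\tilde{\kappa}_1,\psi,\delta_{\irr},\delta_{a,A}$, (ii) the formula for $c_1(K_{\Mb_{g,n}^{\E}})$ in Lemma~\ref{l: canonical bundle elliptic contraction}, and (iii) the genus-specific linear relations among tautological classes from \cite{AC}. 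Once this equality holds, Lemma~\ref{l: criterion} (for $(0,5)$ and $(1,2)$) converts it directly into an $\H^{1,q}$-structure, after checking $c_1(K)^q\neq 0$ and positivity of $\tau$ on some curve. Your ``secondary difficulty'' about controlling the developing map across boundary divisors does not arise: the argument is global on the compact orbifold $\Mb_{g,n}^{\E}$, and the conclusion that the harmonic map is a biholomorphism of universal covers follows from the proportionality via the effective-divisor argument in the proof of Lemma~\ref{l: criterion}.

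Second, your treatment of $(1,3)$ and $(2,1)$ via ``forgetful maps to the lower-genus cases'' does not work. Forgetful maps do not lower the genus, and for $(2,1)$ there is no forgetful map to any case already settled. What the paper does instead is restrict to the boundary divisor $\delta_{\irr}$, parametrized by a gluing map from $\Mb_{0,n+2}$. On this divisor the Fibonacci representation decomposes (by the CohFT gluing axiom) as a rank-one piece plus $\rho_{0,n+2}$, and the already established uniformization of $\Mb_{0,5}^5$ forces the equivariant harmonic map to be a biholomorphism onto a totally geodesic $\H^{1,2}$ along each lift of $\delta_{\irr}$. Irreducibility of $\rho_{g,n}$ (Roberts) then guarantees the image is not contained in that subspace, so the harmonic map has real rank $\ge 5$ somewhere; the Carlson--Toledo strengthening of Siu's theorem, rather than the bare hypothesis $c_1(K)^q\neq 0$, is what yields holomorphicity in these two cases.
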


The uniformization of \( \Mb_{0,5}^5\) by the complex hyperbolic plane has been made explicit by Deligne and Mostow in \cite{DM}, with the use of hypergeometric integrals. Together with Theorem \ref{t: curiosities} this gives a proof that the quantum representation \(\rho_{0,5}^5\) is the monodromy of the  hypergeometric function
\[F(x,y) = \int_1 ^{\infty} u^{-2/5} (u-1)^{-2/5} (u-x)^{-2/5} (u-y)^{-2/5} du .  \] 
Hirzebruch gave an alternative more abstract argument, by computing the Chern numbers of a convenient finite abelian smooth covering of the orbifold \(\Mb_{0,5}^5\) and showed that they satisfy the equality \(c_1^2 = 3 c_2 \), leading to the conclusion, thanks to Yau's theorem (solution to the Calabi's conjecture), that \( \Mb_{0,5}^5\) has a complex hyperbolic structure.
This relation of Hirzebruch complex hyperbolic orbifold with that of the orbifold \(\Mb_{0,5}^5\) has been noticed by Eyssidieux and Funar, see \cite[Example 2.7]{Eyssidieux-Funar}.

Analogously, the construction of the elliptic tail contraction together with its complex hyperbolic structure was discovered in the PhD's dissertation of Livne, \cite{Livne}. What theorem \ref{t: curiosities} says in that case is that the holonomy of this structure is in fact the corresponding Fibonacci representation.

A nice unexpected consequence of Theorem \ref{t: curiosities} is that it provides the solution of a problem raised by Siu  in the survey paper  \cite[Problem (a), p. 182]{Siu_problem}. We prove

\begin{Corollary*}\label{c: Siu}
There exists a surjective holomorphic map between connected compact complex hyperbolic manifolds, the domain and target being respectively of dimension \(3\) and \(2\).  
\end{Corollary*}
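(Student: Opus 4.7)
The plan is to exhibit the desired surjection as the forgetful map between two of the Fibonacci-uniformized moduli spaces. Consider the standard morphism $f:\Mb_{1,3}\to \Mb_{1,2}$ that forgets the third marked point (followed by stable contraction). The first step is to show that $f$ descends to a holomorphic morphism $\tilde f:\Mb_{1,3}^{\E}\to \Mb_{1,2}^{\E}$ between the elliptic-tail contractions. This requires two functoriality checks. First, $f$ must lift to a morphism of the $5$-root stacks $\Mb_{g,n}^{5}$; this reduces to the observation that $f$ pulls back each boundary divisor of $\Mb_{1,2}$ to a reduced sum of boundary divisors of $\Mb_{1,3}$, so the order-$5$ ramification imposed on the boundary of the target is matched by order-$5$ ramification on the source. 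Second, $\tilde f$ must further descend through the two elliptic-tail contractions; this amounts to checking that each elliptic-tail stratum of $\Mb_{1,3}$ is sent by $f$ either to an elliptic-tail stratum of $\Mb_{1,2}$ or to a stratum whose image in $\Mb_{1,2}^{\E}$ is already contracted, in such a way that the resulting factorization through $\Mb_{1,2}^{\E}$ is well defined.

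Granted $\tilde f$, Theorem \ref{t: curiosities} endows $\Mb_{1,2}^{\E}$ and $\Mb_{1,3}^{\E}$ with complex hyperbolic orbifold structures, of complex dimensions $2$ and $3$ respectively (the signature table reads $(p,q)=(1,2)$ for $(g,n)=(1,2)$ and $(p,q)=(3,1)$ for $(g,n)=(1,3)$), and $\tilde f$ is automatically holomorphic with respect to these structures. Surjectivity is inherited from $f$: every stable curve in $\Mb_{1,2}$ admits a stable completion to a $(1,3)$-pointed curve by choosing any extra marked point on its smooth locus, disjoint from the two existing marked points.

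To promote this orbifold surjection to a surjection between smooth complex hyperbolic manifolds, I apply Selberg's lemma to the discrete faithful holonomy representations $\rho_{1,2}:\pi_1^{\mathrm{orb}}(\Mb_{1,2}^{\E})\to \PU(1,2)$ and $\rho_{1,3}:\pi_1^{\mathrm{orb}}(\Mb_{1,3}^{\E})\to \PU(3,1)$. Choose a torsion-free normal subgroup $\Lambda_2\triangleleft \pi_1^{\mathrm{orb}}(\Mb_{1,2}^{\E})$ of finite index, and pick a torsion-free finite-index subgroup $H\leq \pi_1^{\mathrm{orb}}(\Mb_{1,3}^{\E})$ (again by Selberg). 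Set $\Lambda_3=\tilde f_*^{-1}(\Lambda_2)\cap H$. The corresponding finite étale orbifold covers $Y\to \Mb_{1,2}^{\E}$ and $X\to \Mb_{1,3}^{\E}$ are then smooth compact complex hyperbolic manifolds of dimensions $2$ and $3$, and $\tilde f$ lifts to a surjective holomorphic map $X\to Y$, which is the map sought by Siu.

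The main obstacle is the first step: the compatibility of the forgetful map with the elliptic-tail contractions on both sides. This requires a careful stratum-by-stratum analysis of the image of the boundary of $\Mb_{1,3}$ under $f$, separating the cases where the forgotten point is generic, lies on an elliptic tail, or is a point whose removal triggers a nontrivial stabilization, and verifying that the resulting set-theoretic map between the contracted spaces extends to a holomorphic orbifold morphism compatible with the $5$-root-stack structures.
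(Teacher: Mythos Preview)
Your approach is essentially the same as the paper's: the forgetful map $\Mb_{1,3}^{\E}\to\Mb_{1,2}^{\E}$ is constructed in Lemma~\ref{l: forgetful map elliptic contraction} (which handles precisely the ``main obstacle'' you identify, by lifting to the smooth Galois covers of Lemma~\ref{l: Galois covering} and checking that the $\mathbb P^1$-fibrations on the $H_E$'s are sent to each other), and the passage to smooth manifolds is done there directly via those explicit covers rather than by an abstract Selberg argument as you propose---but the two routes are equivalent. One small point: for $(g,n)=(1,3)$ the signature is $(3,1)$, so it is the \emph{conjugate} representation that uniformizes $\Mb_{1,3}^{\E}$ as a $\mathbb H^{1,3}$-orbifold (Proposition~\ref{p:M13}); this is harmless for your argument but worth stating correctly.
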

  
This map is obtained by lifting the forgetful map \(\Mb_{1,3}^{\E}\rightarrow \Mb_{1,2}^{\E}\)  to finite smooth coverings. Using forgetful maps to approach Siu's problem was already investigated in the context of Deligne-Mostow orbifolds in the work of Deraux \cite{Deraux}, although the conclusion was opposite.  We notice that Koziarz and Mok proved that such a surjective map between complex hyperbolic manifolds of different dimensions cannot be a submersion, see \cite{KM}. 

\vspace{0.5cm}

{\bf Acknowledgments:} We are indebted to many persons for numerous conversations and advices around this work, including Martin Deraux, Pascal Dingoyan, Philippe Eyssidieux, Elisha Falbel, Louis Funar, Selim Ghazouani, Alessandro Giacchetto, Vincent Koziarz, Danilo Lewanski, Ron Livne, Gregor Masbaum, Luc Pirio, Adrien Sauvaget, J\'er\'emy Toulisse, Nicolas Tholozan and Dimitri Zvonkine.

\tableofcontents

\section{Twisted orbifold structures on \(\Mb_{g,n}\)}\label{twistedmoduli}

In the context of algebraic geometry, the moduli space $\M_{g,n}$ and its compactification $\Mb_{g,n}$ are Deligne-Mumford stacks, a notion invented specially for them. The twisted version dealed with in this article has already been introduced in the context of TQFT in \cite{Eyssidieux-Funar} and in the context of $r$-spin structures, see \cite{Chiodo} Section 2.1. This notion is not very accessible, at least to the authors of this article, and is not strictly necessary for our purposes. For these reasons we define here these compactifications in an independent way. We present both the orbifold and orbispace viewpoints. Even though we will concretely mostly use the first one, the second will be useful to keep in mind for those having a more topological background/affinities. Most of the material of this section is classical, apart in the last subsection where a new construction of  a particular contraction of the twisted orbifold structure of \(\Mb_{g,n}\) in the level \(\ell =5\) case is described: the elliptic tail contraction.

\subsection{Preliminary remarks on orbifolds}

\subsubsection{Orbifold versus orbispaces}
In this article, we oscillate between two points of view on orbifolds. The first one is the usual concept of orbifold in the realm of differential complex geometry, the other one is the notion of orbispace which belongs to homotopy theory. Both are well-known, we refer to \cite{Henriques} for a nice discussion about their interplay. For the benefit of the reader, let us recall what these structures mean in the case of a developable orbifold, i.e a space of the form $X/G$ where $X$ is a complex variety and $G$ is a discrete group acting properly and holomorphically on $X$. 

An orbifold chart of $X/G$ around $[x]$ is obtained by linearizing the action of $\Stab(x)$ in a neighborhood $U$ of $x\in X$. This open set is projected to a neighborhood of $[x]\in X/G$, providing the orbifold atlas of $X/G$. A map \( f : X/ G\rightarrow Y/ H\) between two developable orbifolds is an orbifold map if it can be lifted to a map \(\tilde{f} : X\rightarrow Y\) which is equivariant with respect to a morphism \(G\rightarrow H\).

The emblematic example that will be considered here is the moduli space \(\M_{g,n}\) of algebraic curves of genus \(g\) with \(n\) marked points, assuming that the stability condition \( 3g-3+n>0\) holds. This space is the quotient of the  Teichm\"uller space \(\T_{g,n}\) by the action of the mapping class group \(\text{Mod}_{g,n}\). We recall (see e.g. \cite{Bers}) that \(\T_{g,n}\) has a structure of smooth complex manifold of dimension \(3g-3+n\) and that the action of \(\text{Mod} _{g,n} \) is properly discontinuous, so \( \M_{g,n}= \T_{g,n}/\Mod_{g,n}\) has a natural structure of developable orbifold. 

Another fundamental example is the Deligne-Mumford compactification \(\Mb _{g,n}\) of \(\M_{g,n}\). We refer to \cite{HubbardKoch}, \cite{ACG} or \cite{Zvonkine} for its definition as an orbifold, and provide a review of its construction in Section \ref{ss: Augmented Teichmuller}. Contrary to \(\M_{g,n}\), \(\Mb_{g,n}\) is not developable. However, we will work with alternative compactifications, twisted versions of \(\Mb_{g,n}\), which are developable, see section \ref{ss: Augmented Teichmuller}. 

This point of view is well-adapted for most geometric constructions involving for instance the integration of differential forms. However, the algebraic topology of $X/G$ is partially lost in the underlying topological space and not so easy to capture from the system of orbifold charts, as for instance the orbifold fundamental group. 

For this reason, what we call the orbispace is, in this case, the homotopical quotient, that is the space $X_G=EG\times X/G$ where $EG$ is a contractible space with a free and proper action of $G$. The action of $G$ is diagonal so that we have a natural projection $p:X_G\to X/G$. In the case when $G$ is a finite group acting trivially on a point $*$, this gives $*_G=EG/G=BG$, the classifying space of $G$. We observe that in general, the preimage $p^{-1}([x])$ is a classifying space for the finite group $\Stab(x)$. We refer to \cite{Henriques} for a general definition of orbispace and for the construction of the orbispace associated to an orbifold. 

The advantage of this second definition is that the orbifold fundamental group of $X/G$ is the usual fundamental group of $X_G$ and more generally, all invariants of $X/G$ coming from algebraic topology will be, by definition, the usual invariants of the homotopical quotient $X_G$.

To sum up, an orbifold is a topological space $M$ endowed with a system of orbifold charts that we denote by $M^o$. It can be converted into a (infinite dimensional) cell-complex $M^h$ by gluing the homotopical quotients of the charts. This latter space comes with a map $p:M^h \to M$ such that $p^{-1}(\{x\})\simeq B_{\Stab(x)}$. When no confusion is possible, the three structures $M,M^o,M^h$ will be denoted simply by $M$. 

In paragraphs \ref{sss: M11orbifold} and \ref{sss: M11orbispace}, we describe the twisted orbifold and orbispace structure \(\Mb_{1,1}^\ell\) in details, for the benefit of the reader who is not familiar with these notions. 

\subsubsection{Euler characteristic}
If a topological space $A$ is homeomorphic to the complement of a closed subcomplex $F$ of a finite complex $X$, we set $\chi(A)=\chi(X)-\chi(F)$. This quantity satisfies the identity $\chi(A)+\chi(B)=\chi(A\cup B)+\chi(A\cap B)$ when it makes sense and gives rise to an Eulerian integral so that $\chi(X)=\int_X d\chi$, see for instance \cite{CGR} for a full account.

If $\tilde{X}\to X$ is a finite covering of degree $d$ of finite CW-complexes, one has $\chi(\tilde{X})=d\chi(X)$. As $EG\to BG$ is a covering of degree $|G|$ and $EG$ is contractible, it is natural to set $\chi(BG)=1/|G|$. 

Finally, by integrating the Euler characteristic along the fibers of the map $p:M^h\to M$, we are led to define $$\chi(M^o)=\int_M \frac{d\chi(x)}{| \Stab(x)|}.$$

\subsubsection{The orbifold structure of $\Mb_{1,1}^\ell$}\label{sss: M11orbifold}
Consider first $\M_{1,1}$, the moduli space of elliptic curves with one marked point. As any pointed elliptic curve has the form $E_\tau=(\C / \Z\oplus \tau\Z, 0)$ for some $\tau\in \H$, two such curve \(E_\tau\) and \(E_{\tau'}\) being biholomorphic iff \(\tau'= \frac{a\tau+b}{c\tau+d}\) where 
\( \begin{pmatrix}
a & b  \\
c & d 
\end{pmatrix}
\in \text{SL}(2,\mathbb Z)\), we have $\M_{1,1}=\H/\SL_2(\Z)$ where the quotient is understood in the orbifold sense. The underlying topological space is a complex plane where the generic point has a stabilizer of order 2 and two special points have order 4 and 6. This gives $\chi(\M_{1,1})=-\frac{1}{12}$. 

In order to compactify $\M_{1,1}$, we add all rational points to the boundary of $\H$ to define $\overline{\H}=\H\cup\P^1(\Q)$ and set $\Mb_{1,1}=\overline{\H}/\SL_2(\Z)$. As these rational points form a single orbit, we have set theoretically $\Mb_{1,1}=\M_{1,1}\cup\{\infty\}$. 

As $\overline{\H}$ is no longer a complex variety, we need to explain how are defined the orbifold charts around the point $\infty$. The stabilizer of $\infty\in \overline{\H}$ is the group $\Z$ of translations by $1$: for $r>1$, the open set $U_r=\{z\in \H, \operatorname{Im} z>r\}\cup\{\infty\}$ induces a homeomorphism $U_r/\Z\to \Mb_{1,1}$ onto a neighborhood $V$ of the point at infinity. 
We fix now $\ell$ a positive integer and identify $U_r/\ell\Z$ with the disc $D$ of radius $e^{-2\pi r/\ell}$ by mapping $z$ to $e^{2i\pi z/\ell}$. By construction, there is a map $D\to V\subset \Mb_{1,1}$ which induces a homeomorphism $D/\mu_\ell\simeq V$ where $\mu_\ell$ is the group of $\ell$-th roots of unity.

\begin{Definition}
The orbifold structure on $\Mb_{1,1}^\ell$ is the unique orbifold structure on $\Mb_{1,1}$ which extends the orbifold structure on $\M_{1,1}$ and such that the orbifold chart around the point at infinity is given by the action of the group $\mu_\ell\times \Z/2\Z$ on $D$ where the second factor acts trivially.
\end{Definition}
By construction, the isotropy group at infinity is $\Z/\ell\Z\times \Z/2\Z$ which gives in particular $\chi(\Mb_{1,1}^\ell)=\frac{1}{2\ell}-\frac{1}{12}$. Its fundamental group is a double cover of the triangular group $\Delta(2,3,\ell)=\langle \alpha,\beta,\gamma| \alpha^2=\beta^3=\gamma^\ell=\alpha\beta\gamma=1\rangle$. It is well-known that this orbifold is developable, its universal covering being the hyperbolic plane $\H$ if $\ell>6$, the complex plane $\C$ if $\ell=6$ and the Riemann sphere $\mathbb P^1$ if $\ell<6$. Notice that in the case \(\ell =5\), the fundamental group of \(\Mb_{1,1}^\ell\) is the binary icosahedral group \(\mathrm{I}2\subset \text{SU} (2)\). This orbifold plays a fundamental role in the article, notably in section \ref{ss: construction elliptic tail contraction}.

We also observe that the topological space underlying $\Mb_{1,1}^\ell$ is homeomorphic to $S^2$. As the map $p:\Mbh_{1,1}^\ell\to \Mb_{1,1}^\ell$ induces an isomorphism in rational (co-)homology and cohomology (as shown by the spectral sequence of equivariant (co-)homology), we get $H^*(\Mb_{1,1}^\ell,\Q)\simeq H^*(S^2,\Q).$

\subsubsection{The orbispace $\Mb_{1,1}^\ell$ as a classifying space} \label{sss: M11orbispace}
Recall that the classifying space of a category is a simplicial set whose vertices are the objects of the category and $n$-simplices are parametrized by chains of maps $C_0\to\cdots\to C_n$. We refer to \cite{segal} for this notion and recall the two following basic facts. Two equivalent categories have homotopically equivalent classifying spaces and the classifying space of the category with one object $*$ with $\Hom(*,*)=G$ is the classifying space $BG$.

Consider a category $SC_\ell$ whose objects consists in pairs $(S,\gamma)$ where $S$ is a closed surface of genus $1$ and $\gamma$ is an essential simple closed curve, possibly empty. 

Denote by $\Gamma_\ell(S,\gamma)$ the group of homeomorphisms of $S$ preserving $\gamma$ and isotopic to some power of $T_\gamma^\ell$, where $T_\gamma$ denotes the Dehn twist along $\gamma$.
A morphism $f:(S,\gamma)\to (S',\gamma')$ is a homeomorphism such that $f(\gamma)\subset \gamma'$ with the relation that $f\sim \phi'\circ f \circ \phi$ for any $\phi\in \Gamma_\ell(S,\gamma)$ and $\phi'\in \Gamma_\ell(S',\gamma')$.

The classsifying space of this category is precisely the space $\Mbh_{1,1}^\ell$. 
The classifying space of the subcategory of pairs of the form $(S,\emptyset)$ is the space $B_{\SL_2(\Z)}$ which is homotopic to $\Mh_{1,1}$. The classifying space of the subcategory of pairs of the form $(S,\gamma)$ with $\gamma\ne\emptyset$ is the space $B_{\Z/2\Z\times\Z/\ell\Z}$. 

This construction is a $\ell$-twisted version of the construction given in \cite{CL}.

\subsection{Construction of the twisted compactification $\Mb_{g,n}^l$}\label{ss: Augmented Teichmuller}


\subsubsection{The orbifold structure \(\Mb _{g,n}^l\)} 
We review an analytic construction of a twisted version of Deligne-Mumford's orbifold, which was considered in the work of Eyssidieux and Funar \cite{Eyssidieux-Funar}. Our point of view is slightly different, and instead of using the stack road, we use the augmented Teichm\"uller space. 

Fix integers \(g,n\geq 0\), \( \ell\geq 1\), such that \( 2g-2+ n >0\), and let  \(S\) be a reference oriented closed surface of genus \(g\) with a subset \(P\subset S\) of cardinality \(n\). We denote by \(\Tb(S, P)\) the augmented Teichm\"uller space, namely the set of equivalence classes of couples \( (C, f)\) where $C$ is a stable nodal curve of genus $g$ and $f:S\to C$ is a pinching map, which means

\begin{enumerate}
    \item $f:S\to C$ is a continuous map such that $f(P)\cap \nodes{C}=\emptyset$ where $\nodes(C)$ is the set of nodes of $C$.
    \item For all $x\in \nodes{C}$, $\alpha_x=f^{-1}(x)$ is a simple curve and, setting $\alpha=f^{-1}\nodes(C)$, $f$ induces a homeomorphism from $S\setminus \alpha$ to $C\setminus\nodes(C)$.
\end{enumerate}
The isotopy class of $\alpha$ will be referred to as the pinched set of $f$. The stability condition is that each component of $C\setminus \nodes{C}\cup f(P)$ has negative Euler characteristic. 
Finally, two pairs $(C,f)$ and $(C',f')$ are equivalent if there exists a biholomorphism $\phi:C\to C'$ such that $\phi\circ f$ and $f'$ are isotopic. 

The augmented Teichm\"uller space is not a manifold, but it carries a natural stratification by sets having a complex manifold structure. Given an isotopy class of one dimensional submanifold \( \alpha\subset S \setminus P\) whose complementary regions have negative Euler characteristic, let \(B_\alpha\) be the stratum corresponding to curves whose pinched set is \(\alpha\). Each stratum \(B_\alpha\) is naturally identified with a product of usual Teichm\"uller spaces, and acquires a structure of complex manifold. For instance, the strata of maximal dimension \(B_{\emptyset}\) is identified with the usual Teichm\"uller space \(\mathcal T(S,P)\). For the topology on \(\overline{\mathcal T}(S,P)\) we refer to \cite{Abikoff, Bers} (see also the more recent treatments \cite{ACG} and \cite{HubbardKoch}). The naive quotient \(\Mb(S,P)=\overline{\mathcal T} (S,P)/\Mod(S,P)\) of the augmented Teichm\"uller space by the modular group is a compact space homeomorphic to the underlying topological space of Deligne-Mumford compactification \( \Mb_{g,n}\) of the moduli space of curves, see \cite{Harvey}. 




We now review the complex orbifold structures on \(\Mb(S,P)\) inherited from Deligne-Mumford, and its twisted versions. Let \(\Gamma_\alpha\subset \Mod (S , P)\) be the subgroup generated by the Dehn twists along the components of \(\alpha\), and by \(\Mod (S , P, \alpha)\subset \Mod (S ,P)\) the subgroup of elements that fix \(\alpha\) (the components might be permuted). Notice that $\Gamma_\alpha$ is a free abelian group of rank $|\alpha|$, the number of components of $\alpha$. We have an exact sequence 
\[0\rightarrow \Gamma_\alpha \rightarrow \Mod (S ,P,\alpha)\rightarrow \Mod (S /\alpha, P)\rightarrow 0\]
where $S/\alpha$ is obtained from $S$ by collapsing each connected component of \(\alpha\) to a point. If $(C,f)$ is an element of $\overline{\T}(S,P)$ whose pinched set is $\alpha$, we define $\Aut(C,f)$ so that it fits in the following exact sequence
\[0 \to \Gamma_\alpha\to \operatorname{Stab}(C,f)\to \Aut(C,f)\to 0\]

Let \(U_\alpha\) be the open subset of \(\overline{\mathcal T}(S, P)\) formed by stable marked curves whose pinched set is contained in \(\alpha\) up to isotopy. We denote by \(V_\alpha\) the quotient of \(U_\alpha\) by \(\Gamma_\alpha\). The following result allows to define an orbifold structure on the quotient \(\Mb(S, P)\) 
which recovers Deligne-Mumford's orbifold \(\Mb_{g,n}\), see \cite{HubbardKoch}:
 
\begin{Theorem}  \label{t: Deligne-Mumford's orbifold}
\begin{enumerate}
    \item \(V_\alpha\) has a unique structure of complex manifold so that the natural map \( \mathcal T(S,P) \rightarrow V_\alpha\) is holomorphic. 
    \item The projections in \(V_\alpha\) of the strata \(B_{\alpha'}\), with \(\alpha'\) describing the components of \(\alpha\), is a family of normal crossing divisors. 
    \item The natural projection \( V_\alpha\rightarrow \Mb(S,P)\) has,  locally around the class of \((C,f)\) in \(V_\alpha  \), fibers given by the orbits of the group \( \Aut (C,f)\).
    \item These charts provide an orbifold structure on \(\Mb(S, P)\) which is  biholomorphic to Deligne-Mumford's orbifold \(\Mb_{g,n}\). 
\end{enumerate}
\end{Theorem}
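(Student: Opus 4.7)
The plan is to construct explicit plumbing charts around each point of $U_\alpha$ and show that they descend to a complex manifold atlas on $V_\alpha=U_\alpha/\Gamma_\alpha$. Fix a point $(C_0,f_0)\in U_\alpha$ with pinched set $\alpha_0\subseteq\alpha$, set $k=|\alpha_0|$, and $m=3g-3+n-k$. Around each node $p_i$ of $C_0$ one fixes once and for all local holomorphic coordinates $(z_i,w_i)$ in which the node reads $\{z_iw_i=0\}$. For small $(t_1,\ldots,t_k)\in \Delta^k$ one replaces the model neighborhood of $p_i$ by the smooth annulus $\{z_iw_i=t_i\}$; combined with Ahlfors--Bers quasiconformal deformations of the normalized irreducible components of $C_0$ (marked by the preimages of the nodes and of $f_0(P)$), this produces a holomorphic family of $n$-pointed stable nodal curves $\pi\colon\mathcal{X}\to\Delta^k\times\Delta^m$ specializing to $C_0$ at the origin.

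The next step is to relate this family to the augmented Teichm\"uller space. Each fibre $\mathcal{X}_{(t,s)}$ inherits a pinching map $f_{(t,s)}\colon S\to \mathcal{X}_{(t,s)}$ obtained by gluing $f_0$ restricted to the components of $C_0$ across a standard model pinching on each plumbed annulus. Because the gluing depends on the argument of $t_i$ when $t_i\ne 0$, $f_{(t,s)}$ is only defined up to powers of the Dehn twist $T_{\alpha_i}$; equivalently, the monodromy of $\pi$ around $\{t_i=0\}$ is exactly $T_{\alpha_i}$. Consequently the association $(t,s)\mapsto [\mathcal{X}_{(t,s)},f_{(t,s)}]$ descends to a continuous bijection from a neighborhood of the origin in $\Delta^k\times\Delta^m$ onto a neighborhood of $(C_0,f_0)$ in $V_\alpha$. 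Declaring this to be a local chart yields item (1); the uniqueness clause comes from the fact that over the open stratum $\T(S,P)$ any compatible complex structure must coincide with this one by the Ahlfors--Bers description, and then extends uniquely across the boundary hyperplanes $\{t_i=0\}$ by Riemann's removable singularity theorem.

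Items (2), (3) and (4) are then read directly off these charts. For (2), the substratum where the pinched set contains $\alpha_i$ is precisely $\{t_i=0\}$, so the family of such strata is a union of coordinate hyperplanes, i.e.\ a normal-crossings divisor. For (3), any $\phi\in\Aut(C_0,f_0)$ permutes the nodes and acts on the local coordinates $(z_i,w_i)$, hence linearly on $(t,s)$; the fiber of $V_\alpha\to \Mb(S,P)$ near $[C_0]$ is exactly this orbit, which matches $\Stab(C_0,f_0)/\Gamma_{\alpha_0}\simeq \Aut(C_0,f_0)$. For (4), the plumbing family $\pi$ is a versal deformation of $C_0$ in the category of stable nodal curves; this is exactly the local model that appears in the algebraic construction of $\Mb_{g,n}$, so the two atlases determine canonically biholomorphic orbifolds via GAGA.

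The hard part is the holomorphicity contained in the middle step: showing that the plumbing parametrization $\Delta^k\times\Delta^m\to V_\alpha$ is holomorphic with respect to the complex structure inherited from Teichm\"uller space on the open stratum. This demands a precise comparison between the plumbing parameters $t_i$ and the analytic moduli on $\T(S,P)$ (for instance via the Bers embedding or Fenchel--Nielsen coordinates), together with control of their asymptotic behavior as $t_i\to 0$ and of the monodromy identification $t_i\mapsto e^{2\pi i}t_i$ under $T_{\alpha_i}$. Once this compatibility is in place, the remainder of the proof is essentially a matter of cataloging the resulting orbifold charts and comparing them with the Deligne--Mumford construction.
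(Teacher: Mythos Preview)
The paper does not give its own proof of this theorem: it is stated as a known result and attributed to \cite{HubbardKoch} (with \cite{ACG} in the background). Your proposal is a faithful outline of exactly that plumbing-coordinates argument --- build a local holomorphic family by opening the nodes, identify the monodromy around $\{t_i=0\}$ with the Dehn twist $T_{\alpha_i}$ so that the parametrization descends to $V_\alpha$, and read off the normal-crossings and orbifold-chart structure from the coordinate hyperplanes. You have also correctly isolated the genuinely hard analytic step, namely the holomorphic compatibility between the plumbing parameters and the Ahlfors--Bers complex structure on $\T(S,P)$; this is the core of the Hubbard--Koch paper and cannot be dispatched in a paragraph.

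One point worth tightening: when the base point $(C_0,f_0)$ has pinched set $\alpha_0\subsetneq\alpha$, your chart as written models a neighborhood in $U_\alpha/\Gamma_{\alpha_0}$, not yet in $V_\alpha=U_\alpha/\Gamma_\alpha$. You need the (standard) observation that the Dehn twists along the non-pinched curves in $\alpha\setminus\alpha_0$ act properly discontinuously and freely on a neighborhood of $(C_0,f_0)$ in $U_\alpha$, so that locally the further quotient by $\Gamma_\alpha/\Gamma_{\alpha_0}$ is a local homeomorphism and the two quotients coincide near that point. Once this is said, your sketch matches the cited construction.
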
 

We now define, for any \(\ell\geq 1\), a twisted orbifold structure \(\Mb^\ell(S,P)\) which recovers the previous one when $\ell=1$ and share the same underlying topological space. 
We define, for any \((C,f)\in \overline{\mathcal T} (S,P) \) whose pinched set is \(\alpha\), the group \(\Aut^\ell (C,f)=\operatorname{Stab}(C,f)/\ell\Gamma_\alpha\); this group is a central extension 
\begin{equation}\label{auto-l}
0 \rightarrow \Gamma_\alpha/\ell\Gamma_\alpha \rightarrow \Aut^\ell (C,f) \rightarrow \Aut (C,f)\rightarrow 0.
\end{equation}

\begin{Corollary} \label{c: structure of l-twisted compactification}
\begin{enumerate}
    \item There is a unique complex manifold structure on \( V_\alpha^\ell = U_\alpha/\ell\Gamma_\alpha\) such that the ramified covering \( V_\alpha ^\ell \rightarrow V_\alpha\) of group \(\Gamma_\alpha/\ell\Gamma_\alpha\) is holomorphic. 
    \item The projection of the strata \( B_{\alpha'}\) in \(V_{\alpha }^\ell \) is a normal crossing family of divisors. 
    \item For any \( (C,f)\in \overline{\mathcal T} (S,P)\) whose pinched set is \(\alpha\), the natural quotient map \(V_{\alpha}^\ell \rightarrow \Mb(S,P)\) has local fibers around the class of \((C,f) \in V_\alpha ^\ell \) given by the orbits of the group \( \Aut^\ell (C,f)\).  
    \item These charts provide an orbifold structure  \(\Mb^\ell(S, P)\) which is  biholomorphic to the construction given by Eyssidieux and Funar in \cite{Eyssidieux-Funar}. 
\end{enumerate}
\end{Corollary}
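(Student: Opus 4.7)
The plan is to deduce each part from the corresponding statement of Theorem 1 by passing to plumbing coordinates around a point of $B_\alpha$ and taking $\ell$-th roots of the plumbing parameters. Fix $(C,f)\in U_\alpha$ with pinched set $\alpha'\subset\alpha$. By Theorem 1 one can choose plumbing parameters at the nodes of $C$ so that a neighborhood of $[C,f]$ in $V_\alpha$ has holomorphic coordinates $(s_1,\ldots,s_k,t_1,\ldots,t_m)$, where $s_i$ is the plumbing parameter at the $i$-th node (vanishing on $B_{\alpha_i}$) and the $t_j$ parametrize deformations of the normalization of $C$ with its marked points and nodes. Because the Dehn twist $T_{\alpha_i}$ acts on $U_\alpha$ by $\log s_i \mapsto \log s_i + 2i\pi$, the function $u_i := \exp(\log s_i/\ell)$ is well-defined on $V_\alpha^\ell = U_\alpha/\ell\Gamma_\alpha$ and satisfies $u_i^\ell = s_i$.

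I would then declare $(u_1,\ldots,u_k,t_1,\ldots,t_m)$ to be holomorphic coordinates on a neighborhood of $[C,f]\in V_\alpha^\ell$. In these coordinates the covering $V_\alpha^\ell \to V_\alpha$ reads $(u,t)\mapsto (u^\ell,t)$ and is tautologically holomorphic, giving existence in (1). Uniqueness is forced by the fact that this covering is étale off the divisor $\prod_i u_i = 0$, so any two complex structures for which it is holomorphic agree on a dense open subset and hence on all of $V_\alpha^\ell$ by Riemann extension. Part (2) is then immediate, since $B_{\alpha''}$ for $\alpha''\subset\alpha$ is cut out by $\{u_i=0 : \alpha_i\in\alpha''\}$. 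For (3), $\Stab(C,f)\subset\Mod(S,P,\alpha)$ acts on $U_\alpha$ preserving the $\ell\Gamma_\alpha$-orbits, and descends to an action of $\Aut^\ell(C,f)=\Stab(C,f)/\ell\Gamma_\alpha$ on the chart whose orbits are precisely the local fibers of $V_\alpha^\ell \to \Mb(S,P)$, by combining Theorem 1(3) with the identification $|\Gamma_\alpha/\ell\Gamma_\alpha|=\ell^k$.

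The main obstacle will be (4). On the overlap side, I would show that for $\alpha'\subset\alpha$ there is a natural holomorphic embedding $V_{\alpha'}^\ell \hookrightarrow V_\alpha^\ell$ compatible with the stabilizer actions, obtained by restricting the plumbing description above to parameters where $s_i\neq 0$ for $\alpha_i\notin\alpha'$, with $\Gamma_\alpha/\Gamma_{\alpha'}$ acting by roots of unity in the complementary $u$-directions. Matching the resulting atlas with Eyssidieux--Funar's construction then amounts to recognizing our charts as a local model for the iterated $\ell$-th root stack along the boundary of $\Mb_{g,n}$: their stack is characterized étale-locally by adjoining $\ell$-th roots of the defining equations of the boundary divisors, which in our plumbing coordinates is exactly the passage from $s_i$ to $u_i$. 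Verifying the universal property of the root stack from the augmented Teichmüller description, and thereby translating between the algebraic (stack) and analytic (augmented Teichmüller) descriptions, is where I expect the real technical work to lie.
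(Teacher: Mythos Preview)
The paper states this result as an immediate corollary of Theorem~1 and gives no proof at all; your plumbing-coordinate argument is precisely the computation one is meant to carry out, and parts (1)--(3) are handled correctly.

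One slip worth flagging in your treatment of (4): the natural map $V_{\alpha'}^\ell \to V_\alpha^\ell$ is \emph{not} an embedding. Since $\ell\Gamma_{\alpha'}\subset \ell\Gamma_\alpha$ with quotient $\ell\Gamma_\alpha/\ell\Gamma_{\alpha'}\simeq (\ell\Z)^{|\alpha\setminus\alpha'|}$ acting freely on $U_{\alpha'}$, this map is an infinite cyclic covering onto its image, and the group $\Gamma_\alpha/\Gamma_{\alpha'}$ you invoke is $\Z^{|\alpha\setminus\alpha'|}$, not a group of roots of unity. The orbifold atlas is not assembled from the full spaces $V_\alpha^\ell$ but from small neighbourhoods of points of $B_\alpha$ in $V_\alpha^\ell$, and compatibility is checked via the $\Aut^\ell(C,f)$-actions exactly as in the $\ell=1$ case of Theorem~1. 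With that correction your identification of the local model $(u,t)\mapsto(u^\ell,t)$ with the $\ell$-th root stack along the boundary is the right endpoint, and the paper does not spell out that comparison either.
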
 

\subsubsection{The orbifold $\Mb^\ell_{g,n}$ is uniformizable for \(\ell\geq 5\) odd}\label{ss: uniformizable}

We recall that an orbifold is uniformizable if it carries a finite orbifold covering which is smooth, in the sense that the isotropy groups are trivial. This is equivalent to saying that the orbifold is the quotient of a smooth manifold by a finite group acting by biholomorphisms. Eyssidieux and Funar proved that the orbifold \( \Mb_{g,n}^\ell \) is uniformizable, at least if \(\ell\geq 5\) is an odd integer, see \cite[Proposition 4.10]{Eyssidieux-Funar}  (In the case \(n=0\), this is a consequence of Pikaart and de Jong's work \cite{PdJ}: the smooth covering is a moduli space of curves with nilpotent level structures.). They notice that the \(\text{SO}(3)\)-quantum representations of level \(\ell\) with all colors equal to \(1\) are injective in restriction to the isotropy groups \(\Aut^\ell (C,f)\) of the orbifold \(\Mb_{g,n}^\ell \), defined in \eqref{auto-l}, hence this is a consequence of Selberg's lemma applied to the image of the relevant quantum representation. 

\subsubsection{Forgetful map} \label{sss: forgetful map twisted}

\begin{Lemma}\label{l: forgetful map} The natural forgetful map \( \Mb _{g,n+1} ^{\ell} \rightarrow \Mb _{g,n} ^{\ell} \) is a holomorphic orbifold map. \end{Lemma}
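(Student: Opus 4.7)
The strategy is to work at the level of augmented Teichmüller spaces. Define $\pi: \Tb(S, P\cup\{p\}) \to \Tb(S, P)$ by sending $(C,f)$, with pinched set $\alpha$, to the pair $(C',f')$ where $C'$ is obtained from $C$ by forgetting the marked point $f(p)$ and contracting the irreducible component $C_0$ containing $f(p)$ if it becomes unstable, and $f'$ is the composition of $f$ with the contraction. A direct stability count shows that $C_0$ becomes unstable in exactly two cases: (i) $C_0$ is rational with one node and one other marked point $p_i$, in which case the pinched curve $a\in\alpha$ bounding $C_0$ is removed from $\alpha$; or (ii) $C_0$ is rational with two nodes and no other marked point, in which case the pair of pinched curves $a_1, a_2 \in \alpha$ bounding $C_0$ (which are isotopic in $S\setminus P$) are identified. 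Denoting the resulting pinched set by $\beta$, the map $\pi$ is continuous for the Bers topology \cite{HubbardKoch}, equivariant under the natural morphism $\Mod(S,P\cup\{p\})\to\Mod(S,P)$ obtained by forgetting $p$, and therefore descends to the usual continuous forgetful map $\Mb(S,P\cup\{p\})\to\Mb(S,P)$.

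To promote $\pi$ to a holomorphic orbifold map for the twisted structures, I verify that around each $(C,f)$ the map $\pi$ lifts to a holomorphic map $V_\alpha^\ell\to V_\beta^\ell$ equivariant under a group morphism $\Aut^\ell(C,f)\to\Aut^\ell(C',f')$. Equivariance follows because the forgetful morphism $\Mod(S,P\cup\{p\})\to\Mod(S,P)$ sends $\Stab(C,f)$ into $\Stab(C',f')$ and carries $\Gamma_\alpha$ into $\Gamma_\beta$: the twists along curves absent from $\beta$ in case (i) become trivial in $\Mod(S,P)$, while the two twists in case (ii) both map to the twist along the common image curve. This descends modulo $\ell\Gamma_\alpha$ and $\ell\Gamma_\beta$, giving the required morphism on $\Aut^\ell$. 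Holomorphicity of the underlying map $V_\alpha\to V_\beta$ is the classical Hubbard-Koch construction \cite{HubbardKoch,ACG}: in plumbing coordinates, the pinching parameter $z_b$ on $V_\beta$ pulls back to $z_b$ (up to a unit) when $b\in\alpha\cap\beta$ and to the product $z_{a_1}z_{a_2}$ in case (ii), while each forgotten coordinate on $V_\alpha$ is an unconstrained extra parameter encoding the position of $p$ on the universal curve. After extracting $\ell$-th roots $w_a^\ell = z_a$, the lift $V_\alpha^\ell\to V_\beta^\ell$ reads $w_b\mapsto w_b$ or $w_b\mapsto w_{a_1}w_{a_2}$ respectively, plus projection onto the forgotten coordinates.

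The only non-routine point is the annulus case (ii): the formula $w_b = w_{a_1}w_{a_2}$ defines a genuinely holomorphic (rather than multivalued) lift only because contracting a two-node rational bubble multiplies the plumbing smoothing parameters on the nose, so that $(w_{a_1}w_{a_2})^\ell = z_{a_1}z_{a_2} = z_b$. Granting this compatibility, which is a standard consequence of the plumbing construction, the remaining verifications are local and mechanical.
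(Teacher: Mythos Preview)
Your argument is correct and self-contained, but it takes a noticeably more hands-on route than the paper does. You work locally in the orbifold charts $V_\alpha^\ell \to V_\beta^\ell$, track the combinatorics of how the pinched multicurve changes under forgetting (your cases (i) and (ii)), and verify holomorphicity by an explicit plumbing-coordinate computation, including the key identity $z_b = z_{a_1}z_{a_2}$ when a two-noded rational bubble is contracted. The paper instead argues globally: it observes that the forgetful morphism $\Mod_{g,n+1}\to\Mod_{g,n}$ carries the normal subgroup generated by $\ell$-th powers of Dehn twists into the corresponding subgroup, so the continuous map of augmented Teichm\"uller spaces descends to a continuous map $\AT_{g,n+1}^\ell \to \AT_{g,n}^\ell$; since this map is visibly holomorphic on the Zariski-dense open Teichm\"uller part and the targets are smooth complex manifolds by Corollary~\ref{c: structure of l-twisted compactification}, Riemann's extension theorem gives holomorphicity everywhere, with no coordinate computation needed. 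Your approach buys an explicit local description of the map (useful if one later needs to compute pullbacks of divisors or normal bundles along boundary strata), while the paper's approach is shorter and sidesteps the case analysis and the plumbing identity entirely.
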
 

\begin{proof} Given \(g,n\) with \( 2g-2+n>0\), we fix a subset $P=\{x_1,\ldots,x_{n+1}\}\subset S_g$ and define a continuous forgetful map (see \cite{ACG}) 
\begin{equation} \label{eq: forgetful AT}  \AT_{g,n+1} \rightarrow \AT_{g,n} \end{equation}
which assigns to a marked stable curve of genus \(g\) with \(n+1\) marked numbered points, \((C, f, Q=f(P) )\), the curve \( (C', f' , Q')\) where \((C',Q')\) is the stabilization of the curve  \((C, \{f(x_1),\ldots,f(x_{n})\}) \), and \( f'\) is 
the composition of $f$ with the stabilisation map \( (C,Q) \rightarrow (C',Q')\). 
The map \eqref{eq: forgetful AT} is equivariant with respect to the morphism 
\begin{equation} \label{eq: forgetful morphism} \text{Mod}_{g,n+1} \rightarrow \text{Mod}_{g,n}\end{equation} 
which sends the subgroup \(\Gamma_{g,n+1}^{\ell} \subset \text{Mod} _{g,n+1} \) generated by the \(\ell\)-powers of  Dehn twists in \(\text{Mod} _{g,n+1}\) to the corresponding subgroup \(\Gamma_{g,n} ^{\ell}  \subset \text{Mod} _{g,n} \).
Hence, denoting \(\AT_{g,n}^{\ell}:= \AT _{g,n}/\Gamma_{g,n} ^{\ell}\),  the map \eqref{eq: forgetful AT} induces a map 
\[ \AT_{g,n+1}^{\ell} \rightarrow \AT_{g,n}^{\ell} \] 
which is holomorphic with respect to the smooth complex structures on \(\AT_{g,n+1}^{\ell}\) and \(\AT_{g,n}^{\ell}\) given by Corollary \ref{c: structure of l-twisted compactification}; indeed, it is continuous and holomorphic in restriction to  \(\mathcal T_{g,n+1}/\Mod_{g,n+1} \), which is Zariski dense in \(\AT_{g,n+1}^{\ell}\), so this is a consequence of Riemann's extension theorem.  We deduce that the forgetful map 
\[ \Mb _{g,n+1} ^{\ell}\simeq  \AT_{g,n+1}^{\ell}/\text{Mod} _{g,n+1} ^{\ell} \rightarrow \Mb _{g,n} ^{\ell}\simeq   \AT_{g,n}^{\ell}/\text{Mod} _{g,n} ^{\ell} \]
is a holomorphic orbifold map as we wanted to prove. 
\end{proof}

\subsubsection{The $\ell$-twisted compactification as a classifying space}
We present here a construction of the homotopical version of $\Mb_{g,n}^\ell$ which is purely topological and makes clear the formal properties of these spaces. It is sufficient for defining the higher Toledo invariants of quantum representations and showing that they satisfy the axioms of a CohFT.

Let $SC_\ell$ be the category whose objects are triples $(S,P,\alpha)$ where $S$ is a closed oriented surface of genus $g$, $P\subset S$ a finite set of cardinality $n$ and $\alpha\subset S\setminus P$ a collection of disjoint simple curves such that each component of $S\setminus (P\cup\alpha)$ has negative Euler characteristic. 
We denote by $\Gamma_{\alpha}^\ell(S,P)$ the group of homeomorphisms of $S$ fixing $P$, preserving $\alpha$ and generated up to isotopy by $\ell$-th powers of Dehn twists along the components of $\alpha$. Finally we define a morphism $(S_1,P_1,\alpha_1)\to (S_2,P_2,\alpha_2)$ as a homeomorphism $f:S_1\to S_2$ mapping $P_1$ to $P_2$ and $\alpha_1$ into $\alpha_2$, up to the relation $f\sim \phi_2\circ f\circ \phi_1$ for $\phi_i\in \Gamma_{\alpha_i}^\ell(S_i,P_i)$.

\begin{Proposition}
The classifying space of the category $SC_\ell$ is the orbispace associated to the orbifold $\Mb^\ell(S,P)$. 
\end{Proposition}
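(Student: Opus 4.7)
The strategy is to realize both sides as homotopy colimits over the same combinatorial index, namely the poset $\mathcal{P}$ of $\Mod_{g,n}$-orbits of isotopy classes of admissible curve systems $\alpha\subset S\setminus P$ ordered by inclusion. The essential geometric input is that each stratum $B_\alpha\subset \AT(S,P)$ is a product of classical Teichm\"uller spaces and therefore contractible.

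The first step is to establish a dictionary between $SC_\ell$ and the orbifold stratification of $\Mb^\ell(S,P)$. Objects of $SC_\ell$ up to isomorphism are parametrized by $\mathcal{P}$, and since $\Gamma_\alpha$ is normal in $\Mod(S,P,\alpha)$ a direct computation identifies the automorphism group of an object $(S,P,\alpha)$ in $SC_\ell$ with $\Mod(S,P,\alpha)/\ell\Gamma_\alpha$, in agreement with the orbifold isotropy group $\Aut^\ell(C,f)$ appearing in \eqref{auto-l}. The projection $\pi:SC_\ell\to \mathcal{P}$ sending $(S,P,\alpha)$ to its orbit class is then a functor, and its fiber over $[\alpha]$ is a groupoid equivalent to the one-object groupoid with automorphism group $\Mod(S,P,\alpha)/\ell\Gamma_\alpha$.

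The second step is a double homotopy colimit identification. On the categorical side, Thomason's theorem applied to $\pi$ expresses the classifying space as $|SC_\ell|\simeq \operatorname{hocolim}_{\mathcal{P}} F$, where $F(\alpha)=B\bigl(\Mod(S,P,\alpha)/\ell\Gamma_\alpha\bigr)$ and the face maps are induced by the morphisms $(S,P,\alpha)\to(S,P,\beta)$ for $\alpha\subsetneq\beta$. On the geometric side, the orbifold charts $V_\alpha^\ell=U_\alpha/\ell\Gamma_\alpha$ of Corollary~\ref{c: structure of l-twisted compactification} provide an open cover of $\Mb^\ell(S,P)$ indexed by $\mathcal{P}$; passing to the orbispace, each chart deformation retracts onto the homotopy quotient of the contractible stratum $B_\alpha$ by its stabilizer, which is homotopy equivalent to $F(\alpha)$. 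A standard homotopy-colimit gluing argument then yields $\Mbh^\ell(S,P)\simeq \operatorname{hocolim}_{\mathcal{P}} F$, and comparing with the categorical description gives the statement.

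The main obstacle is the geometric half of the second step: justifying that the natural map from $\operatorname{hocolim}_{\mathcal{P}} F$ to $\Mbh^\ell(S,P)$ is an equivalence rather than merely a filtration. The subtlety is that the charts $V_\alpha^\ell$ overlap in non-trivial ways corresponding to higher-codimension strata, and one must verify that the contraction of each $B_\alpha$ to a point in its homotopy quotient is compatible with these overlaps and with the permutations of components of $\alpha$ that can occur in the $\Mod$-action. The genus one case of \S\ref{sss: M11orbispace} provides a sanity check: there $\mathcal{P}$ has two elements, and the homotopy colimit reduces to the pushout of $B\SL_2(\Z)$ and $B(\Z/2\Z\times\Z/\ell\Z)$ along the classifying space of the parabolic subgroup, reproducing the description of $\Mbh_{1,1}^\ell$ given in that paragraph.
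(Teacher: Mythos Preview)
The paper's own proof is a single sentence: ``This is a direct adaptation of Theorem 6.1.1 in \cite{CL}.'' Your plan is precisely such an adaptation spelled out in modern language: Charney--Lee's argument realizes the classifying space of their stable-curve category as a homotopy colimit over the poset of curve systems, using contractibility of Teichm\"uller strata, and your use of Thomason's theorem together with the chart system $V_\alpha^\ell$ is the $\ell$-twisted version of exactly that. The ``main obstacle'' you flag---compatibility of the stratum contractions with the overlaps of the $V_\alpha^\ell$ and with the component-permuting part of the $\Mod$-action---is indeed where the technical work sits in \cite{CL}, and is handled there by their careful analysis of the simplicial structure of the curve complex; the adaptation to the $\ell$-twisted setting introduces no new difficulty since passing from $\Gamma_\alpha$ to $\ell\Gamma_\alpha$ only changes the isotropy groups by the finite central factor $\Gamma_\alpha/\ell\Gamma_\alpha$, leaving the combinatorics of the cover untouched.
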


\begin{proof}
This is a direct adaptation of Theorem 6.1.1 in \cite{CL}.
\end{proof}

\subsection{The elliptic tail contraction}\label{ss: elliptic tail contraction}

An alternative compactification of the moduli space will be useful to study the Fibonacci quantum representations, namely those corresponding to the group \(\text{SO}(3)\) and the level \(\ell=5\): it is obtained from \(\Mb_{g,n}^5\) by contracting the elliptic tail divisor \(\delta_{1,\emptyset}^5\). This divisor is made of nodal curves having at least one singular point separating the curve into two components, one of which being an elliptic curve without marked point. 
The main property of this compactification is that it still has a natural orbifold structure. In this section we provide the construction of this compactification, prove that it has some good functoriality properties with respect to forgetful maps, and finally we compute the first Chern class of its canonical bundle. 

\subsubsection{The construction}\label{ss: construction elliptic tail contraction}

\begin{Theorem} \label{t: elliptic contraction} 
For \( (g,n)\neq (2,0)\), there exists an orbifold \(\Mb_{g,n}^{\mathcal E}\) and an orbifold holomorphic (contraction) map \(c:\Mb_{g,n}^5 \rightarrow \Mb_{g,n}^{\mathcal E} \) whose fibers consist of equivalence classes of stable curves that are isomorphic after taking out the elliptic tails while keeping their attaching points. The map \(c\) induces an isomorphism at the fundamental group level. 
\end{Theorem}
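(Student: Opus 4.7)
The starting point is the observation, made explicit in Section~2.1.3, that $\Mb_{1,1}^5$ is a compact \emph{spherical} orbifold: its universal cover is the Riemann sphere $\mathbb P^1$ and its orbifold fundamental group is the finite binary icosahedral group $I_2 \subset \SU(2)$. The plan is to realize $c$ as the simultaneous contraction of every $\Mb_{1,1}^5$-fiber of $\delta_{1,\emptyset}^5$ to a single (orbifold) point, using a Grauert--Mumford type argument on a finite smooth cover that lifts $\Mb_{1,1}^5$ to $\mathbb P^1$.

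\textbf{Step 1 (Local contraction over the generic part of the divisor).} Near a point representing a generic elliptic-tail curve $E\cup_p C_0$, the clutching morphism identifies $\Mb_{g,n}^5$ orbifold-locally with the product of $\Mb_{1,1}^5$, a disc $D_s$ for the twisted smoothing parameter at the node $p$, and a local neighborhood of $[C_0,p,\ldots]$ in $\Mb_{g-1,n+1}^5$. Using uniformizability (Section~2.2.2), I pass to a finite smooth \'etale cover on which $\Mb_{1,1}^5$ lifts to $\mathbb P^1$ and the neighborhood becomes a smooth $\mathbb P^1$-bundle over a smooth base. A direct normal bundle calculation (keeping track of the fifth-root twist) shows that the normal line bundle of the elliptic tail divisor restricts on each such $\mathbb P^1$-fiber to a line bundle of negative degree, essentially a fifth root of $-\psi$ at the elliptic side of the node. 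The classical Grauert--Mumford criterion then contracts each $\mathbb P^1$-fiber to a smooth point, and because the construction is equivariant under the deck transformations of the cover, it descends to an orbifold contraction on this open locus.

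\textbf{Step 2 (Globalization and the exclusion $(g,n)=(2,0)$).} Along deeper strata of $\delta_{1,\emptyset}^5$, the same product picture persists: at a curve with $k\geq 1$ elliptic tails attached to an arbitrary (possibly itself nodal) body, the local orbifold chart factors as $(\Mb_{1,1}^5)^k$ times the local moduli of the pointed body, up to a finite symmetry permuting the tails. The contractions in the different $\Mb_{1,1}^5$-factors are independent and mutually compatible, hence the local models glue into a globally defined orbifold $\Mb_{g,n}^{\mathcal E}$ together with a holomorphic orbifold map $c$. This construction requires a stable ``body'' to exist, which fails precisely when $(g,n)=(2,0)$: there the most degenerate configuration consists of two elliptic components linked by a rational bridge, and the simultaneous contraction of both tails produces a quadratic cone (ordinary double point) rather than an orbifold singularity, explaining the hypothesis $(g,n)\neq(2,0)$.

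\textbf{Step 3 (Fundamental group).} By construction each fiber of $c$ is a finite quotient of a product of copies of $\Mb_{1,1}^5$, hence a compact orbifold with \emph{finite} orbifold fundamental group. Pulled back to the finite uniformizing cover from Step~1, these fibers become products of $\mathbb P^1$'s, which are simply connected, and the Grauert contraction of a proper flat family of simply connected fibers between smooth complex manifolds induces an isomorphism on ordinary fundamental groups. Taking equivariant quotients by the deck group yields the claimed isomorphism $\pi_1(\Mb_{g,n}^5)\simeq \pi_1(\Mb_{g,n}^{\mathcal E})$ at the level of orbifolds. The main technical obstacle throughout is the normal bundle computation in Step~1 and its correct interpretation in the twisted orbifold setting, together with the uniform verification of the local product structure at the deep intersections of $\delta_{1,\emptyset}^5$ with the remaining boundary divisors of $\Mb_{g,n}^5$; that the argument is possible at all is a feature of the level $\ell=5$, as $\Mb_{1,1}^\ell$ becomes Euclidean at $\ell=6$ and hyperbolic for $\ell\geq 7$, where no such contraction can exist.
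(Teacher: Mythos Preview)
Your outline follows the same strategy as the paper: pass to a smooth finite cover, lift the elliptic-tail divisor to a union of $\mathbb P^1$-fibered hypersurfaces, contract the fibers, and descend equivariantly. However, two points in your execution are genuine gaps that the paper handles carefully.

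First, and most importantly, ``negative degree'' of the normal bundle along the $\mathbb P^1$-fibers is not enough. Grauert's criterion only produces a contraction to an analytic space; to obtain a \emph{smooth} target (so that the quotient is an orbifold rather than something singular) you need the normal bundle to restrict to $\mathcal O(-1)$ on each fiber, i.e.\ the Nakano--Fujiki criterion for the inverse of a monoidal transformation. The paper devotes Lemma~\ref{l: icosahedral} to computing $H_E\cdot F=-1$ exactly, and the computation is not just ``a fifth root of $-\psi$'': in the base case $(g,n)=(1,2)$ one gets $[R]^2 = 5\cdot|\mathrm I2|\cdot[\delta_{1,\emptyset}^5]^2 = 5\cdot 120\cdot\frac{1}{25}\cdot(-\frac{1}{24})=-1$, where the order $120$ of the binary icosahedral group is essential. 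The general case is then reduced to this one by embedding a copy of $\Mb_{1,2}^5$ transverse to the divisor. Your sketch does not carry out this computation and invokes the wrong contraction criterion.

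Second, merely passing to a uniformizing cover does not guarantee that the lifted elliptic-tail divisor has \emph{smooth} irreducible components, which is needed for the Nakano--Fujiki theorem to apply. The paper (Lemma~\ref{l: Galois covering}) augments the cover by the $H_1(S,\mathbb Z/5\mathbb Z)$-marking precisely so that each component $H_E$ is indexed by a symplectic $2$-plane $E$ and is forced to be smooth: two non-isotopic separating curves bounding genus-one pieces with the same mod-$5$ homology must intersect, so distinct branches cannot meet. Your local product description in Step~2 presupposes this smoothness without justifying it.
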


The space \( \Mb _{g,n}^{\mathcal E} \), as a set, might be identified with the set of stable nodal curves of genus \( g- k \) with \(n+k\) marked points (\( k\le g\)), \(n\) of which (the marked points) being numbered, not the $k$ remaining ones (the tail points). The map \( \Mb_{g,n} ^5\simeq \Mb _{g,n} \rightarrow \Mb _{g,n} ^{\mathcal E}\) associates to a stable nodal curve of genus \(g\) with \(n\) numbered marked points, having \(k\) elliptic tails, the curve obtained by contracting each of its elliptic tails to a tail point.

\begin{Remark}
In the case \( (g,n)=(2,0)\), the elliptic tail divisor is still contractible, but its contraction leads to a quadratic singularity. This will be investigated in a forthcoming work.
\end{Remark}

It will be convenient to use an appropriate smooth finite orbifold Galois covering of \(\Mb_{g,n}^5\), and to construct the contraction in an equivariant way with respect to the Galois group:

\begin{Lemma}\label{l: Galois covering} There exists a finite Galois orbifold covering  \( X \rightarrow \Mb_{g,n}^5\) having the property that \(X\) is smooth, and that  the preimage in \(X\) of the elliptic tail divisor \(\delta_{1,\emptyset}^5\) is a normal crossing divisor \(\delta_{1,\emptyset}^X\) whose irreducible components are smooth hypersurfaces. 
\end{Lemma}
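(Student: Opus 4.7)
The plan is as follows. First, I would invoke the Eyssidieux--Funar uniformization result recalled in Subsection 2.2.2: since $\ell = 5$ is odd and $\geq 5$, the orbifold $\Mb_{g,n}^5$ admits a finite Galois orbifold covering $\pi\colon Y_0 \to \Mb_{g,n}^5$ with $Y_0$ smooth and Galois group $G_0$. Because the orbifold boundary of $\Mb_{g,n}^5$ is normal crossing (Corollary \ref{c: structure of l-twisted compactification}(2)) and $\pi$ is étale in the orbifold sense, the preimage $\pi^{-1}(\delta_{1,\emptyset}^5)$ is automatically a normal crossing divisor in $Y_0$. It only remains to arrange that each irreducible component is smooth.

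The obstruction to smoothness of an irreducible component $D\subset \pi^{-1}(\delta_{1,\emptyset}^5)$ is the presence of self-intersections, which occur exactly at those points of $Y_0$ where the underlying stable curve carries two or more elliptic tails whose nodes lie in a single $G_0$-orbit along $D$. To remedy this, I would construct a further finite cover which labels the elliptic-tail nodes. Concretely, in a local $V_\alpha^5$-chart (Corollary \ref{c: structure of l-twisted compactification}(1)) around a stable curve with $k$ elliptic tails, the $k$ local branches of the pullback of $\delta_{1,\emptyset}^5$ appear as smooth coordinate hyperplanes, and a symmetric group $S_k$-cover picking an ordering on them separates the branches. These local covers patch into a global finite cover because the moduli data of an "ordering of the elliptic-tail nodes'' is well-defined stratum by stratum on $Y_0$. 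Taking the Galois closure of the resulting cover over $\Mb_{g,n}^5$, composed if necessary with an equivariant (Bierstone--Milman) resolution of singularities that preserves the normal crossing structure, produces the desired finite Galois orbifold cover $X$; each irreducible component of the preimage of $\delta_{1,\emptyset}^5$ now corresponds to a distinguished labeled elliptic-tail node, hence is smooth.

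The main obstacle will be this patching step: one must check that the local $S_k$-covers globalize into a finite orbifold cover of $\Mb_{g,n}^5$ across the deeper boundary strata, where several degeneration types occur simultaneously, rather than merely topologically or stratum-wise. The cleanest route is moduli-theoretic: introduce the moduli problem parametrizing stable curves together with a linear ordering of their elliptic-tail nodes, show that it is representable by a proper Deligne--Mumford stack admitting a twisted $\ell$-th root compactification as in Corollary \ref{c: structure of l-twisted compactification}, and verify that the forgetful map to $\Mb_{g,n}^5$ is finite and étale in the orbifold sense outside the locus where the ordering is ambiguous, so that its Galois closure--fibered with $Y_0$--produces the required $X$ with all stated properties.
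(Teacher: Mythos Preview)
Your first paragraph is fine and matches the paper's start: take the Eyssidieux--Funar smooth Galois cover $Y_0$, so that the preimage of $\delta_{1,\emptyset}^5$ is already normal crossing. The gap is in your mechanism for separating the irreducible components. An ``ordering of the elliptic-tail nodes'' cannot globalize to a finite \emph{cover} of $Y_0$ (or of $\Mb_{g,n}^5$): over the open stratum of smooth curves there are no elliptic tails, so the degree there is $1$, while over a point with $k$ elliptic tails you want degree $k!$. A map whose degree jumps across strata is not an \'etale cover, and neither Galois closure nor equivariant resolution repairs this. Your proposed moduli problem (stable curves with an ordering of their elliptic-tail nodes) has the same defect: it is set-theoretically a bijection over the smooth locus and a nontrivial $S_k$-torsor over deeper strata, so the forgetful map is not finite \'etale in the orbifold sense anywhere interesting.

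The paper's fix is to replace your local, stratum-dependent label by a \emph{global} one. One passes to the further cover associated to the mod~$5$ homology representation $\rho'\colon \pi_1(\Mb_{g,n}^5)\to \Aut(H_1(S,\Z/5\Z))$, taking $X$ to be the cover given by $\rho\times\rho'$. Now every elliptic tail of a curve in $X$ carries an intrinsic label: the symplectic $2$-plane $E\subset H_1(S,\Z/5\Z)$ onto which its homology maps. The components of $\delta_{1,\emptyset}^X$ are the sets $H_E$ for varying $E$, and smoothness of each $H_E$ follows from an elementary topological fact: two disjoint non-isotopic separating curves in $S$ bounding genus-one pieces give orthogonal symplectic $2$-planes, so a single curve cannot have two elliptic tails with the same label $E$. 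This is exactly the global labeling you were looking for; the missing idea is that it must come from a structure defined on \emph{all} of $\Mb_{g,n}^5$, not just on the boundary.
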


\begin{proof} Let \( \rho : \pi _1 (\Mb_{g,n}^5) \rightarrow G\) be a morphism to a finite group having the property that it is injective in restriction to the isotropy groups of \( \Mb_{g,n}^5\), see section \ref{ss: uniformizable} for its existence, and \( \rho':  \pi_1 ( \Mb_{g,n}^5) \rightarrow \text{Aut} (H_1(S,\mathbb Z/ 5\mathbb Z)) \) be the morphism induced by the action on the homology of \(S\) modulo \(5\) (it is a priori defined on the mapping class group; the fact that it descends to a morphism defined on \(\pi_1 ( \Mb_{g,n}^5)\) comes from that Dehn twists are mapped to elements of order \(1\) or \(5\) in \(\text{Aut} (H_1(S,\mathbb Z/ 5\mathbb Z))\)). 

Let \(X\) 
be the covering of \( \Mb_{g,n}^5 \) corresponding to the morphism \( \rho \times \rho' : \pi_1 (\Mb_{g,n}^5) \rightarrow G\times \text{Sp} (2g, \mathbb Z/ 5\mathbb Z)\). Since \(\rho\times \rho'\) is injective on isotropy groups of \(\Mb_{g,n}^5\), the covering \(X\) is a smooth orbifold. In particular, denoting by \(\pi: X\rightarrow \Mb_{g,n}^5\) the natural projection map, the second item of Corollary \ref{c: structure of l-twisted compactification} tells us that the preimage \(\delta_{1,\emptyset} ^X := \pi ^{-1} (\delta _{1,\emptyset}^5)\subset X\) is a normal crossing divisor.  

Any point \( x\in X\) corresponds to an equivalence class of pinching maps $f:S\to C_x$. Denote by \(f_x:=f_* : H_1 (S, \mathbb Z/5\mathbb Z) \rightarrow H_1 (C , \mathbb Z/5\mathbb Z)\) the map induced by the pinching map. As $X$ is the quotient of the augmented Teichmüller space 
by the kernel of the morphism $\rho\times \rho'$, the map \(f_x\) is well-defined since the kernel of \( \rho\times \rho'\) contains the kernel of \(\rho'\).

For any symplectic subspace \( E \subset H_1 (S,\mathbb Z/5\mathbb Z)\) of dimension \(2\), denote by \(H_E\subset X\) the set of elements \(x\in X\) so that \(C_x\) is a nodal curve having an elliptic tail whose first homology modulo \(5\) is the subspace \(f_x (E)\); the union of all \( H_E \)'s is the divisor \( \delta_{1,\emptyset}^X\). So it suffices to prove that \(H_E\) is a smooth hypersurface to conclude the proof of the lemma. 

The preimage of \(H_E\) in the augmented Teichm\"uller space is the union of all strata \(B_\alpha\) where some component \(\beta\) of \(\alpha\) is a simple closed curve that separates \(S\) in two components, one of which being homeomorphic to a torus minus a disc \(T_\beta \subset S\)  whose homology modulo \(5\) maps via inclusion onto the subspace \(E\). We denote by \( \mathcal C_E\) the set of all these \(\alpha\)'s. The closure of \(B_{\alpha}\) is the union of strata \(B_{\alpha'}\) where \(\alpha'\) contains an element $\alpha\in \mathcal{C}_E$ up to isotopy. 
Hence it suffices to prove that two non isotopic simple closed curves \(\beta,\beta'\in \mathcal C_E\) cannot  be components of a same \(\alpha \in \mathcal C_E\), or which is equivalent, that \(\beta\) and \(\beta'\) intersect. Suppose by contradiction that this happens.  Then \(\beta'\) being separating and not isotopic to \(\beta\) it cannot be contained in \( T_\beta\). Reversing the role of \(\beta\) and \(\beta'\) shows that \( T_\beta\) and \(T_{\beta'}\) are disjoint; in particular the image of their homology  group in \(H_1 (S,\mathbb Z/5\mathbb Z)\) are orthogonal, which is contradictory to the fact that they are both equal to \(E\).  \end{proof} 

The elliptic tail divisor \(\delta_{1,\emptyset}^5\) is parametrized in a natural way by the moduli space \( \Mb_{1,1}^5 \times \Mb_{g-1,n+1}^5 \) via an attaching map. Although this parametrization is not injective as soon as \(g\geq 2\), and so cannot be inverted at the level of \(\delta_{1,\emptyset}^5\subset\Mb_{g,n}^5\), it can be done at the level of \(H_E\subset X\) in the following sense: one has a natural covering map \(r_E: H_E\rightarrow \Mb_{1,1}^5\times \Mb_{g-1,n+1}^5\), which assigns to an element of \( H_E\) the unique elliptic tail whose homology modulo \(5\) is \(E\) as the first coordinates in \(\Mb_{1,1}^5\), and the contraction of this latter as the second coordinates in \(\Mb_{g-1,n+1}^5\). Since the unique smooth cover of \(  \Mb_{1,1}^5 \) is its universal cover 
which is biholomorphic to the Riemann sphere, this equip each (smooth)  hypersurface \(H_E\) of \(\delta_{1,\emptyset}^X\) with a locally trivial fibration \begin{equation}\label{eq: f_E} \widetilde{\Mb_{1,1}^5 }\simeq \mathbb P^1 \rightarrow H_E\stackrel{f_E}{\rightarrow} K_E\end{equation} 
over a smooth manifold \( K_E\). The fibers of \(f_E\) are the connected components of \( r_E^{-1} ( \Mb_{1,1}^5\times \{*\} )\).

Those fibrations on different \(H_E\)'s are compatible in the following sense:   

\begin{Lemma}\label{l: fibrations}
Given \(E_1,\ldots, E_r\) a collection of symplectic \(2\)-dimensional subspaces of \( H_1 (S, \mathbb Z/5\mathbb Z)\), the intersection \(H_{E_1, \ldots ,E_r}:= H_{E_1}\cap \ldots \cap H_{E_r}\) (it is not empty if and only if the \(E_i\)'s are orthogonal wrt the intersection form), there is a fibration of \( H_{E_1}\cap \ldots \cap H_{E_r}\) by  \((\mathbb P^1)^r\) (with a natural identification of the fibers with \( \widetilde{\Mb_{1,1}^5}^r\) up to the action of \( \pi_1 (\Mb_{1,1}^5)^r\), in particular the monodromy does not permute the \(\mathbb P^1\)-factors) so that the \(i\)-th \(\mathbb P^1\)-subfibration corresponds to the fibration \( f_{E_i}\) of \(H_{E_i}\) restricted to the intersection \( H_{E_1}\cap \ldots \cap H_{E_r}\). 
\end{Lemma}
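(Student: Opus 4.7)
The plan is to extend the construction of the $\P^1$-fibration $f_E:H_E\to K_E$ built in \eqref{eq: f_E} to the intersection locus. First I establish the non-emptiness criterion. As at the end of the proof of Lemma~\ref{l: Galois covering}, two distinct elliptic tails of a stable nodal curve correspond to disjoint separating curves $\beta_i,\beta_j\subset S$ bounding disjoint tori $T_{\beta_i},T_{\beta_j}$, and their mod-$5$ homologies $E_i,E_j$ are therefore symplectically orthogonal in $H_1(S,\Z/5\Z)$. Conversely, a standard symplectic basis argument shows that any orthogonal family $E_1,\ldots,E_r$ is realized by disjoint non-isotopic separating curves $\beta_1,\ldots,\beta_r\subset S$ that are each of elliptic type, producing points of $H_{E_1,\ldots,E_r}$.

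Second, I generalize $r_E$ as follows. Each $x\in H_{E_1,\ldots,E_r}$ yields a curve $C_x$ with $r$ canonically labeled elliptic tails (the tail whose mod-$5$ homology is $f_x(E_i)$ is unambiguously the $i$-th one). I define a natural holomorphic map
\[
r_{E_1,\ldots,E_r}\colon H_{E_1,\ldots,E_r}\longrightarrow (\Mb_{1,1}^5)^r\times \Mb_{g-r,n+r}^5
\]
sending $x$ to the tuple of these labeled tails together with the stabilization of the curve obtained by contracting them all. A local analysis at a point whose pinched set $\alpha$ contains the components realizing the $E_i$'s (and possibly additional nodes) uses the orbifold charts of Corollary~\ref{c: structure of l-twisted compactification} on both sides to identify $r_{E_1,\ldots,E_r}$ locally with a product map. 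The local isotropy groups $\Gamma_\alpha/5\Gamma_\alpha$ on the source and target differ by the factor $(\Z/5\Z)^r$ attached to the $r$ contracted nodes; this factor is exactly killed in the first $r$ factors after passing to the smooth cover $\widetilde{\Mb_{1,1}^5}\simeq\P^1$, which is the key point making $r_{E_1,\ldots,E_r}$ a well-defined covering map.

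Third, I pull back along $r_{E_1,\ldots,E_r}$ the étale covering $\widetilde{\Mb_{1,1}^5}^{\,r}\times\Mb_{g-r,n+r}^5 \to (\Mb_{1,1}^5)^r\times\Mb_{g-r,n+r}^5$ and take a connected component; this yields the desired locally trivial $(\P^1)^r$-fibration
\[
(\P^1)^r\longrightarrow H_{E_1,\ldots,E_r}\longrightarrow K_{E_1,\ldots,E_r},
\]
whose fibers are identified with $\widetilde{\Mb_{1,1}^5}^{\,r}$ up to $\pi_1(\Mb_{1,1}^5)^r$. The monodromy cannot permute the $\P^1$-factors because the labels $E_1,\ldots,E_r$ are intrinsically fixed by the action on $H_1(S,\Z/5\Z)$, and $X$ was built in Lemma~\ref{l: Galois covering} as the cover trivializing precisely this action via $\rho'$. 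Compatibility with $f_{E_i}$ is immediate: projecting the pulled-back fibration to its $i$-th $\widetilde{\Mb_{1,1}^5}$-coordinate recovers the restriction of $r_{E_i}$ to $H_{E_1,\ldots,E_r}\subset H_{E_i}$.

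The main obstacle is the covering-map claim for $r_{E_1,\ldots,E_r}$ in the second step, since it requires a careful bookkeeping of the local isotropy groups $\Aut^5(C,f)$ on both sides and a verification that after contracting the $r$ labeled elliptic tails one obtains the expected product local model; the other steps are then formal consequences. The orthogonality argument for the $E_i$'s used in the first step relies only on the fact that disjoint subsurfaces have symplectically orthogonal homologies, exactly as in the proof of Lemma~\ref{l: Galois covering}.
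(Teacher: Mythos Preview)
Your proposal is correct and follows essentially the same route as the paper: define the natural map $r_{E_1,\ldots,E_r}\colon H_{E_1,\ldots,E_r}\to(\Mb_{1,1}^5)^r\times\Mb_{g-r,n+r}^5$, observe it is an orbifold covering, and use smoothness of $H_{E_1,\ldots,E_r}$ together with the fact that the only smooth connected cover of $(\Mb_{1,1}^5)^r$ is $(\P^1)^r$ to obtain the desired fibration; you add welcome detail on the orthogonality criterion and the local isotropy bookkeeping that the paper leaves implicit. One minor remark on your third step: phrasing it as ``pull back the cover $(\P^1)^r\times\Mb_{g-r,n+r}^5\to(\Mb_{1,1}^5)^r\times\Mb_{g-r,n+r}^5$ along $r_{E_1,\ldots,E_r}$ and take a connected component'' only yields the fibration on $H_{E_1,\ldots,E_r}$ itself once you note that some component of the pullback maps isomorphically to $H_{E_1,\ldots,E_r}$, i.e.\ that $r_{E_1,\ldots,E_r}$ lifts to $(\P^1)^r\times\Mb_{g-r,n+r}^5$; this is exactly the smoothness argument the paper uses directly (the connected fibers of $H_{E_1,\ldots,E_r}\to\Mb_{g-r,n+r}^5$ are smooth covers of $(\Mb_{1,1}^5)^r$, hence copies of $(\P^1)^r$), so you may find it cleaner to phrase it that way.
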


\begin{proof}
The intersection \( H_{E_1, \ldots ,E_r}\) has a natural map to \( (\Mb_{1,1}^5) ^r \times \Mb_{g-r, n+r}^5\), whose first \(r\) coordinates are given by the elliptic tails corresponding to each \(E_i\)'s, and the last one is the contraction of those elliptic tails. This map is an orbifold covering map, and since \(H_{E_1, \ldots ,E_r}\) is smooth, and the unique smooth covering of \( (\Mb_{1,1}^5) ^r\) is its universal cover biholomorphic to the \(r\)-power of the Riemann sphere, the lift of the fibration of  \( (\Mb_{1,1}^5) ^r \times \Mb_{g-r, n+r}^5\) by  \( (\Mb_{1,1}^5) ^r \) defines a fibration of \(H_{E_1, \ldots ,E_r}\) whose fibers hare naturally universal covers of \( (\Mb_{1,1}^5) ^r \).  The last statement of the lemma is then obvious.
\end{proof}

\begin{Lemma} \label{l: icosahedral} 
For any fiber \(F\) of the fibration \eqref{eq: f_E}, we have \( H_E \cdot F = - 1\). 
\end{Lemma}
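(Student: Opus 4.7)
The plan is to compute $H_E\cdot F$ as the degree on $F$ of the normal bundle $N_{H_E/X}$, using the classical formula for the normal bundle of the boundary divisor in the Deligne--Mumford compactification, together with the fact that $F\simeq \widetilde{\Mb_{1,1}^5}\simeq \mathbb P^1$ is the universal cover of $\Mb_{1,1}^5$, whose orbifold fundamental group is the binary icosahedral group of order $120$ (whence the name of the lemma).

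First, I would recall the standard identification on $\Mb_{g,n}$,
\[
N_{\delta_{1,\emptyset}/\Mb_{g,n}}\;\cong\;\mathbb L_1^\vee\otimes \mathbb L_2^\vee,
\]
where $\mathbb L_1$ (resp.\ $\mathbb L_2$) denotes the cotangent line at the elliptic tail node on the genus one (resp.\ genus $g-1$) side. Since $p:\Mb_{g,n}^5\to\Mb_{g,n}$ is the fifth root stack along the boundary, we have $p^*\mathcal O(\delta_{1,\emptyset})=\mathcal O(5\,\delta_{1,\emptyset}^5)$, hence
\[
N_{\delta_{1,\emptyset}^5/\Mb_{g,n}^5}^{\otimes 5}\;\cong\; p^*\bigl(\mathbb L_1^\vee\otimes \mathbb L_2^\vee\bigr).
\]
By Lemma \ref{l: Galois covering}, the pullback $\pi^{-1}(\delta_{1,\emptyset}^5)=\sum_E H_E$ is a reduced normal crossing divisor, so at a generic point of $H_E$ (not on any other $H_{E'}$) the identification $N_{H_E/X}^{\otimes 5}=\pi^*\bigl(\mathbb L_1^\vee\otimes \mathbb L_2^\vee\bigr)$ holds.

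Next I would restrict everything to $F$. Since $r_E$ sends $F$ to a single point of the second factor $\Mb_{g-1,n+1}^5$, the pullback of $\mathbb L_2$ to $F$ is trivial, whereas the pullback of $\mathbb L_1$ is identified, via the universal cover $\tilde p:F\to\Mb_{1,1}^5$, with $\tilde p^{\,*}\psi$, where $\psi=c_1(\mathbb L_1)$ is the usual $\psi$-class on $\Mb_{1,1}^5$. The orbifold degree of this class equals the classical value $1/24$, because $p:\Mb_{1,1}^5\to\Mb_{1,1}$ is an isomorphism on coarse moduli and $\psi_{\Mb_{1,1}^5}=p^*\psi_{\Mb_{1,1}}$. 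Integrating along the universal cover multiplies the degree by the order of the covering group, namely $120$, so that $\deg_F\tilde p^{\,*}\psi=120\cdot \tfrac{1}{24}=5$.

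Combining, $5\,(H_E\cdot F)=\deg_F\bigl(N_{H_E/X}^{\otimes 5}|_F\bigr)=-5$, whence $H_E\cdot F=-1$. The only slightly subtle point in this outline is the identification $\psi_{\Mb_{1,1}^5}=p^*\psi_{\Mb_{1,1}}$ as rational classes; this follows from uniqueness of the extension of the Hodge line bundle from $\M_{1,1}$ across the cusp, together with the fact that $H^2(\Mb_{1,1}^5,\mathbb Q)$ is one dimensional.
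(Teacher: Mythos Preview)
Your argument is correct and takes a genuinely different route from the paper. The paper argues in two stages: it first treats the case $(g,n)=(1,2)$ by a direct self-intersection computation (using $[\delta_{1,\emptyset}]^2=-\tfrac{1}{24}$ on $\Mb_{1,2}$, the scaling $[\delta_{1,\emptyset}^5]=\tfrac{1}{5}[\delta_{1,\emptyset}]$, and the order $5\cdot 120$ of the stabiliser of a component $R$ in the cover), and then for general $(g,n)$ constructs a two-dimensional test surface $S\subset X$ (by attaching a copy of $\Mb_{1,2}^5$ to a fixed curve) so that $F\subset S\cap H_E$ is a $(-1)$-curve in $S$, reducing to the special case. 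Your approach bypasses this reduction entirely by using the standard normal-bundle formula $N_{\delta_{1,\emptyset}}\cong\mathbb L_1^\vee\otimes\mathbb L_2^\vee$ together with the root-stack relation $N_{\delta_{1,\emptyset}^5}^{\otimes 5}=p^*N_{\delta_{1,\emptyset}}$ and the degree $120$ of the universal cover $F\to\Mb_{1,1}^5$; the numerics $120\cdot\tfrac{1}{24}=5$ and $5\cdot(H_E\cdot F)=-5$ then give the result uniformly in $(g,n)$. Both proofs ultimately rest on the same arithmetic input ($\int_{\Mb_{1,1}}\psi=\tfrac{1}{24}$ and $|\mathrm{I}2|=120$), but yours is more streamlined and avoids the case analysis and the construction of test surfaces. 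Two small remarks: (i) your identification $N_{H_E/X}^{\otimes 5}\simeq\pi^*(\mathbb L_1^\vee\otimes\mathbb L_2^\vee)$ should be read along a fiber $F$ not meeting any other $H_{E'}$ (which is fine, since $H_E\cdot F$ is constant in the fibration); (ii) the ``subtle point'' you flag is immediate, since the cotangent line $\mathbb L_1$ is defined by the universal family, which pulls back from $\Mb_{1,1}$ to $\Mb_{1,1}^5$.
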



\begin{proof} 
Although this lemma is true in general, we explain the proof only when $(g,n)\ne (2,0)$. 
In the case where \((g,n)=(1,2)\), this result can be found in Livne's PhD dissertation \cite{Livne}, but for completeness we recall the proof here. The preimage \(\delta_{1,\emptyset} ^X\) of   \(\delta_{1,\emptyset}^5\simeq \Mb _{1,1}^5\) in \(X\) is a finite union of rational curves in this case, the stabilizer of each of those being a subgroup of \(H\) isomorphic to \(\pi_1(\mathcal V)\), where \(\mathcal V\) is a small tubular neighborhood \(\mathcal V\) of \(\delta_{1,\emptyset}^5\). Notice that the fundamental group of \(\mathcal V\) is a \(\mathbb Z/5\mathbb Z\)-extension of the fundamental group of \(\delta_{1,\emptyset}^5\), which is the binary icosahedral group \(\mathrm{I}2 \subset \SU(2)\), see Subsection \ref{sss: M11orbifold}.  Denoting by \(R\) a component of \( \delta_{1,\emptyset} ^X\), we then have
\[ [R]^2= 5\times |\mathrm{I}2|\times [\delta_{1,\emptyset}^5 ] ^2 . \]
But \( [\delta_{1,0} ^5 ]=\frac{1}{5}\delta_{1,\emptyset}\), so \([\delta_{1,\emptyset}^5]^2 = \frac{1}{5^2} [\delta_{1,\emptyset}]^2 = -\frac{1}{5^2  \times 24}\), see \cite[Part 2.2.2]{Zvonkine}, and finally we find \([R]^2 = -1\) as claimed since \(|\mathrm{I}2|= 120\).

\begin{Remark}
Using Castelnuovo's contraction theorem, one can deduce from this computation that the neighborhood of the divisor \( \delta_{1,\emptyset}^5\) in \(\Mb_{1,2}^5 \) is isomorphic as an orbifold to the neighborhood of the exceptional divisor in the quotient of the blow-up of \(\mathbb C^2\) at the origin by the group generated by multiplication by \( \mu_5 \id\) and by the binary icosahedral group \( \mathrm{I}2 \subset \SU(2)\). We leave the details for the reader. 
\end{Remark}

Let us now consider the general case. First notice that if \(g=0\) there is nothing to prove since \(\delta_{1,\emptyset}^5\) is empty. So in the sequel we suppose that \(g \geq 1\). The strategy is to construct a complex surface \(S\subset X\) which intersects \(H_E\) transversally in a curve whose components are fibers \(F\) of \(f_E\). Then the statement of the lemma is equivalent to saying that \(F\) is a \((-1)\)-rational curve in \(S\).

The construction of \(S\) depends on \((g,n)\). In the sequel we assume that \( (g,n)\notin \{(1,1),(1,2), (2,0)\}\).

If \(n\geq 1\), fix an element \( C_0 \in \Mb _{g-1,n}^5\) (notice that the stability condition \( 2(g-1) - 2 +n>0\) is satisfied by our assumptions), and define \(S\) as being the pull-back in \(X\) of the submanifold \(R\subset \Mb _{g,n}^5\) formed by nodal curves obtained by attaching a curve \(C\in \Mb_{1,2}^5\) to \(C_0\) by identifying fixed marked points. Then \(R\) is isomorphic to \(\Mb _{1,2}^5\), and under this identification, its intersection with \(\delta_{1,\emptyset }^5\subset \Mb_{g,n}^5\) is the elliptic tail divisor of \(\Mb_{1,2}^5\). Hence, the intersection of \( S\) with \(H_E\) are made of fibers \(F\) of \(f_E\), and the previous considerations in the particular case \((g,n)=(1,2)\) show that in \(S\) those \(F\)'s
 are \((-1)\)-curves. Hence we are done in that case.
 
If \(n=0\) and \( g\geq 3\). Fix smooth curves \(C_1\in \Mb_{g-1,1}^5\) and \(C_2\in \Mb_{1,1}^5\), and let \( R\subset \Mb_{g,n}^5\) be the surface formed by nodal curves obtained by attaching \(C_1\) and \(C_2\) to a curve  \(C\in  \Mb_{1,2}^5\).  Let now \(S\subset X\) be the set of curves that project to a curve of \(R\), in such a way that the homology of the component \(C\) modulo \(5\) is equal to \(E\). Then \(S\) is a covering of \(\Mb_{1,2}^5\), and its intersection with \(H_E\) is the pull back of the tail divisor of \(\Mb_{1,2}^5\). Hence the lemma is proved in that case too. 
\end{proof} 

\begin{proof}[Proof of Theorem \ref{t: elliptic contraction}]
Suppose that we have a smooth compact complex manifold  \(Y\), a finite family of normal crossing smooth hypersurfaces \( H_i \subset Y\), and  for any subset of indices \(J\subset I \), fibrations \( (\mathbb P^1)^{J} \rightarrow  H_J:=\bigcap\limits_{j\in J}  H_{j} \stackrel {f_J}{\rightarrow}   K_J \),  in such a way that 
\begin{enumerate}
    \item the monodromy of \(f_J\) does not exchange the factors of the fibers \(\simeq (\mathbb P^1)^{J}\), so given any \(J'\subset J\), the \(J'\)-coordinate part of the fibration \(f_J\) is a well-defined \((\mathbb P^1 )^{J'}\)-fibration , 
    \item given disjoint subsets \(J_1, J_2\subset I\) the intersection \( H_{J_1,J_2}= H_{J_1} \cap H_{J_2}\) is invariant by the fibration \(f_{J_1}\) (resp. \(f_{J_2}\)), and its restriction to \(H_{J_1,J_2}\) is the \(J_1\)-coordinate part (resp. the \(J_2\)-coordinate part) of the fibration \(f_{J_1,J_2}\), 
    \item for any \(i\in I\), and any fiber \(F= f_i ^{-1} (*)\subset H_i\), we have \(H_i \cdot F=-1\).  
\end{enumerate}

Choose such a data and enumerate \(I=\{i_1,\ldots, i_r\}\).  By \cite{Nakano} and its supplement \cite{FN}, we can find a contraction map \( c_1 : Y\rightarrow Y_1 \) to a smooth complex compact manifold, which has the property that it maps \(H_{i_1}\) to a codimension two submanifold of \(Y_1\), contracting each fibers of \(f_{i_1}\) to a point and not more. The \(c_1\)-images of the hypersurfaces \(H_{i_k}\) for \(k\geq 2\) form a smooth family of normal crossing hypersurfaces in \(Y_1\), and their intersections are naturally endowed with fibrations that satisfy all the previous properties 1., 2. and 3.. We can then define inductively smooth compact complex spaces \(Y_k\) and contractions \( c_k : Y_{k-1} \rightarrow Y_k\) that contract the \(\mathbb P^1\)-fibration of the hypersurface \(  c_{k-1}\circ\ldots \circ c_1 (H_{i_k})\). At the end we obtain a space \(Y_r\) which is topologically the quotient of \(Y\) by the equivalence class given by \(x\sim y\) iff for each \(i\) we have that \( f_i(x)=f_i(y)\) as soon as both \(x,y\in H_i\).  This space does not depend on the enumeration of \(I\) that we have chosen; indeed, this is clear at the topological level, and at the analytical one this is a consequence of Riemann's extension theorem and of the fact that the map \(c_{r} \circ \ldots \circ c_{1}\) is injective apart from a codimension one analytic set. 

Applying this to the covering \(X\) of \(\Mb_{g,n}^5\) constructed before, together with the family of smooth hypersurfaces \(H_E\) and the fibrations of their intersections given by Lemma \ref{l: fibrations}, we find a contraction map \( c: X \rightarrow Z \) to a smooth space \(Z\), which by the aforementioned unicity is equivariant with respect to a morphism \( \text{Gal} (X\rightarrow \Mb_{g,n}^5) \rightarrow \text{Aut} (Z)\). The quotient of \(Z\) by the image of the previous morphism is the desired quotient of \( \Mb_{g,n}^5\).    
\end{proof}

\subsubsection{Forgetful map between Fibonacci elliptic tail contractions}

In the Fibonacci case \( \ell = 5\), this enables to construct natural forgetful maps between the elliptic tail contractions. 

\begin{Lemma}\label{l: forgetful map elliptic contraction}
The forgetful map of Lemma \ref{l: forgetful map} induces an orbifold holomorphic map 
\begin{equation} \label{eq: forgetful map elliptic contraction} \Mb _{g,n+1}^{\E}\rightarrow \Mb _{g,n} ^{\E}\end{equation} 
which is onto. 
\end{Lemma}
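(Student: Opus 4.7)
The plan is to show that the forgetful map $F\colon \Mb_{g,n+1}^5 \to \Mb_{g,n}^5$ of Lemma \ref{l: forgetful map} descends through the elliptic tail contractions $c_{g,n+1}$ and $c_{g,n}$ of Theorem \ref{t: elliptic contraction}, producing a commutative square of holomorphic orbifold maps whose bottom arrow $\bar F$ is the desired map.

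First, I would verify the set-theoretic descent by checking that $c_{g,n}\circ F$ is constant on each fiber of $c_{g,n+1}$. A fiber of $c_{g,n+1}$ consists of stable curves obtained from a fixed ``core'' curve by attaching elliptic curves of varying moduli at a fixed set of attaching points. The key observation is that by definition an elliptic tail carries no marked point, so the $(n+1)$-th point must lie on the core, and forgetting it and stabilizing affects only the core. The originally attached elliptic tails are untouched. New elliptic tails may appear (typically when the $(n+1)$-th point lay on a genus-one component meeting the rest at a single node, or when the stabilization produces such a configuration), but such new tails depend only on the core, not on the varying tails. Hence, after contracting all elliptic tails of $F(C)$ in $\Mb_{g,n}^{\E}$, the result depends only on the class $c_{g,n+1}(C)$, yielding a well-defined set-theoretic map $\bar F\colon \Mb_{g,n+1}^{\E} \to \Mb_{g,n}^{\E}$.

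For holomorphicity, I would invoke the universal property of the contraction $c_{g,n+1}$. The composition $c_{g,n}\circ F$ is a holomorphic orbifold map that, by the previous step, is constant on the fibers of $c_{g,n+1}$; since the latter is a contraction onto a smooth orbifold obtained as a successive Nakano contraction of $\mathbb P^1$-fibrations (as in the proof of Theorem \ref{t: elliptic contraction}), the descended map $\bar F$ is automatically holomorphic. It is an orbifold map because both contractions induce isomorphisms on orbifold fundamental groups, so the orbifold structure of $\bar F$ is read off from that of $F$. Surjectivity of $\bar F$ follows from surjectivity of its constituents: given $y \in \Mb_{g,n}^{\E}$, lift it to $\tilde C \in \Mb_{g,n}^5$, then choose a preimage $C \in \Mb_{g,n+1}^5$ under the surjective forgetful map (by adjoining a generic extra marked point), and finally set $x = c_{g,n+1}(C)$, so that $\bar F(x)=y$.

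The principal obstacle is the set-theoretic verification in the first step: ensuring that $F$ does not introduce new identifications beyond those already captured by $c_{g,n}$. This reduces to a careful case analysis of how stabilization interacts with elliptic tails, showing in particular that any newly created elliptic tail depends only on the combinatorial type of the core and not on the moduli of the pre-existing elliptic tails in the fiber of $c_{g,n+1}$.
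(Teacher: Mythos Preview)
Your outline is sound, but it diverges from the paper's proof and has one genuine soft spot. The paper does not argue at the orbifold level: it passes to the smooth Galois covers $X_{g,n+1}\to\Mb_{g,n+1}^5$ and $X_{g,n}\to\Mb_{g,n}^5$ of Lemma~\ref{l: Galois covering} (chosen compatibly with the $H_1(S,\Z/5\Z)$-marking), lifts the forgetful map to an equivariant holomorphic map $h\colon X_{g,n+1}\to X_{g,n}$, and observes that $h$ carries each hypersurface $H_{E,g,n+1}$ into $H_{E,g,n}$ while respecting the $\mathbb P^1$-fibrations. This is the same geometric fact you isolate (elliptic tails carry no marked point, so forgetting and stabilizing leaves them untouched), just checked on the covers. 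One then gets a continuous, hence by Riemann extension holomorphic, map $Z_{g,n+1}\to Z_{g,n}$ between the smooth contractions, and Galois equivariance descends it to the orbifold map $\Mb_{g,n+1}^{\E}\to\Mb_{g,n}^{\E}$.

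Your set-theoretic descent and the holomorphicity of $\bar F$ on the underlying complex spaces are fine (the latter by a Stein-factorization argument: $c_{g,n+1}$ is proper with connected fibers and the target is normal). The weak point is the sentence ``It is an orbifold map because both contractions induce isomorphisms on orbifold fundamental groups''. In the paper's framework an orbifold map must lift equivariantly to smooth covers, and this does not follow formally from a $\pi_1$-statement; knowing $\pi_1(c_{g,n+1})$ is an isomorphism tells you what the induced homomorphism $\pi_1(\bar F)$ \emph{should} be, not that $\bar F$ actually lifts. To produce the lift you need an equivariant map between finite smooth covers of $\Mb_{g,n+1}^{\E}$ and $\Mb_{g,n}^{\E}$, and those covers are precisely the contractions $Z_{g,n+1}$, $Z_{g,n}$ of the $X$'s --- so completing your argument forces you back onto the paper's route. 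What your approach buys is a clean conceptual picture of why the descent exists; what the paper's buys is that the orbifold structure comes for free from equivariance at the smooth level.
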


\begin{proof}
We consider the finite Galois orbifold coverings \( X_{g,n+1}\rightarrow \Mb_{g, n+1}^{5}\) and \( X_{g,n}\rightarrow \Mb_{g, n}^{5}\) constructed in subsection \ref{ss: construction elliptic tail contraction}, with smooth underlying spaces \(X_{g,n+1}\) and \(X_{g,n}\). As in Lemma \ref{l: Galois covering}, we choose the coverings in such a way that that they cover the covering of \(\Mb_{g,n+1}^5\) or \(\Mb_{g,n}^5\) made of pointed curves whose underlying curve is marked by \( H_1 (S, \mathbb Z/ 5\mathbb Z)\). In particular the pull back of the elliptic tail divisor is the union of hypersurfaces \( H_{g,n}=\cup _E H_{E,g,n}\), with \(E\) varying over the set of symplectic dimension two symplectic submodules of \(H_1(S,\mathbb Z/5\mathbb Z)\),  with \(H_{E,g,n}\) being the subset of \(\Mb_{g,n}^5\) having an elliptic tail whose homology group maps onto  \(E\) by inclusion. 

Up to taking a larger covering of \( X_{g,n+1}\) if necessary, we can assume that the forgetful map \( \Mb_{g,n+1}^5\rightarrow \Mb_{g,n}^5\) (constructed in \ref{sss: forgetful map twisted})   lifts to a holomorphic map \(h: X_{g,n+1}\rightarrow X_{g,n} \) equivariant wrt to the actions of the Galois groups of the coverings \(X_{g,n+1}\rightarrow \Mb_{g,n+1}^5\) and  \(X_{g,n}\rightarrow \Mb_{g,n}^5\). By construction, the map \(h \) maps \( H_{E,g,n+1}\) to \(H_{E,g,n}\), sending the \(\mathbb P^1\)-fibration of \(H_{E,g,n+1}\) to the one of \( H_{E,g,n}\). So it induces a continuous (and hence holomorphic by Riemann's extension theorem) map from the contraction \(Z_{g,n+1}^E\) of the \(H_{E,g,n+1}\)'s to the contraction \(Z_{g,n}^E\) of \(H_{E,g,n}\). Reasoning inductively as in the proof of Theorem \ref{t: elliptic contraction} shows that the map \(h\) induces a holomorphic map  from the contraction \(Z_{g,n+1}\) of all \(H_{E,g,n+1}\) to the corresponding space \(Z_{g,n}\). This map is equivariant with respect to the natural actions of the Galois group of \(X_{g,n+1}\rightarrow \Mb_{g,n+1}^5\) (resp. of \(X_{g,n}\rightarrow \Mb_{g,n}^5\)) acting on \(Z_{g,n+1}\) (resp. \(Z_{g,n}\)). This ends the proof of the lemma.\end{proof}

\begin{Remark} The kernel of the forgetful morphism \eqref{eq: forgetful morphism} 
is isomorphic to the fundamental group \(F_{g,n}=\pi_1(S\setminus \{x_1,\ldots, x_n\})\) of the genus \(g\) surface minus \(n\) points, by Birman's exact sequence. So the kernel of the \(\ell\)-twisted forgetful morphism 
\begin{equation} \label{eq: twisted forgetful morphism} \Mod _{g,n+1} ^{\ell} \rightarrow \Mod _{g,n} ^{\ell} \end{equation}
is the quotient of \(F_{g,n}\) by the group generated by \(\ell\)-powers of elements freely homotopic  to simple closed curves. It would be interesting to compute this group for \( (g,n)=(1,2)\) (in this case \(\Mb_{g,n+1}^{\mathcal E} \) and \(\Mb_{g,n}^{\mathcal E}\) are both complex hyperbolic orbifolds of respective dimension  \(3\) and \(2\), and the forgetful map answers in the negative Siu's problem \cite[Problem (a)]{Siu_problem}, see Corollary \ref{c: Siu}). 
\end{Remark}

\subsection{The canonical bundles of  \(\Mb_{g,n}^\ell\) and \(\Mb_{g,n}^{\E} \)}\label{canonical_bundle}

We recall that the second cohomology group of \(\Mb_{g,n}\) with rational coefficients is generated by the \(\psi\)-classes \( \psi_i\), \(i=1,\ldots, n\), the class \(\kappa_1\), and the classes of the boundary divisors: \( \delta_{irr}\) and \(\delta_{a, A}\), where \( 0\leq a\leq g\) and \(A\subset P\) is a subset satisfying the inequalities \(2a-2 + |A|\geq 0\) and \(2(g-a)-2 + (n-|A|) \geq 0\) (see \cite{AC}).   

In the sequel we denote by \(\delta\) the sum of all boundary divisors and by \(\psi=\sum_i \psi_i\) the sum of \(\psi\)-classes. It will be convenient for us to introduce the class 
\[ \tilde{\kappa}_1 = \kappa_1 - \psi\] 
since the Toledo invariants of quantum representations are better expressed in the basis formed by \(\psi\)-classes, boundary classes, and \(\tilde{\kappa}_1\).

\begin{Lemma}\label{l: canonical bundle}
\(K_{\Mb_{g,n}^\ell} = \frac{13}{12} \tilde{\kappa}_1 + \left( \frac{1}{12}- \frac{1}{\ell}\right)  \delta + \psi \)
\end{Lemma}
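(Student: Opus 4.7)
The plan is to reduce to the known case $\ell=1$ and then account for the extra contribution coming from the orbifold structure along the boundary via a Riemann--Hurwitz-type computation.

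First I would recall the classical formula $K_{\Mb_{g,n}} = 13\lambda - 2\delta + \psi$, where $\lambda$ is the first Chern class of the Hodge bundle, together with Mumford's relation in the form $12\lambda = \tilde{\kappa}_1 + \delta$; the latter follows from the identity $\pi_*(c_1(\omega_\pi)^2) = 12\lambda - \delta$ applied to the universal curve $\pi : \Mb_{g,n+1} \to \Mb_{g,n}$, together with a short computation showing that Arbarello--Cornalba's $\kappa_1$ satisfies $\kappa_1 - \tilde{\kappa}_1 = \psi$. Substituting immediately yields the $\ell = 1$ case of the claim,
\[
K_{\Mb_{g,n}} = \frac{13}{12}\tilde{\kappa}_1 - \frac{11}{12}\delta + \psi.
\]

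Next, I would compare $K_{\Mb_{g,n}^\ell}$ and $K_{\Mb_{g,n}}$. By Corollary \ref{c: structure of l-twisted compactification}, the projection $\pi : \Mb_{g,n}^\ell \to \Mb_{g,n}$ is an $\ell$-th root stack along the normal crossing divisor $\delta$. Locally around a smooth point of $\delta$ with transverse coordinate $z$, the orbifold chart is $z = w^\ell$ with $\mu_\ell$ acting by rotation on $w$, so the identity $dz = \ell w^{\ell-1} dw$ gives $\pi^* K_{\Mb_{g,n}} = K_{\Mb_{g,n}^\ell} - (\ell-1)[\tilde{\delta}]$, where $\tilde{\delta}$ is the reduced preimage of $\delta$. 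Since $\pi^*[\delta] = \ell[\tilde{\delta}]$ as orbifold divisor classes, and since $\pi_* : H^*(\Mb_{g,n}^\ell, \Q) \to H^*(\Mb_{g,n}, \Q)$ is a cohomology isomorphism identifying $\pi^*\alpha$ with $\alpha$, I would conclude
\[
K_{\Mb_{g,n}^\ell} = K_{\Mb_{g,n}} + \left(1 - \frac{1}{\ell}\right)\delta.
\]
Combining this with the $\ell = 1$ formula above gives the stated identity.

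The only point that needs care is extending the local Riemann--Hurwitz computation to the deeper boundary strata: at a point lying on $r$ distinct boundary components, the orbifold chart is a product of $r$ independent one-dimensional $\ell$-th root models, so the contributions $(\ell-1)[\tilde{\delta}_j]$ from the various components simply add and no extra mixing terms appear. This is exactly what is encoded in the construction of $\Mb_{g,n}^\ell$ as a simultaneous root stack along the irreducible boundary divisors.
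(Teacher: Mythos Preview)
Your proof is correct and follows essentially the same route as the paper: first express $K_{\Mb_{g,n}}$ via the Harris--Mumford formula and Mumford's relation $12\lambda=\tilde{\kappa}_1+\delta$, then apply the Riemann--Hurwitz correction $K_{\Mb_{g,n}^\ell}=K_{\Mb_{g,n}}+(1-\tfrac{1}{\ell})\delta$ coming from the order-$\ell$ ramification along the boundary. Your extra remarks (the local $z=w^\ell$ computation and the check at deeper strata) simply spell out what the paper takes for granted; the only quibble is notational---you write $\pi_*$ where you mean the identification of rational cohomology induced by the bijection of underlying spaces (so that $\pi^*\alpha\leftrightarrow\alpha$).
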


\begin{proof}
Harris and Mumford proved that  (see \cite[Theorem 7.15]{ACG})
\[ K_{\Mb_{g,n}} = 13\lambda_1 + \psi -2\delta\]
where \(\lambda_1\) is the first Chern class of the Hodge bundle, and this latter is expressed in our basis by the formula (see \cite[Theorem 7.6]{ACG}).
\[ \tilde{\kappa}_1 = 12 \lambda_1 - \delta. \]
So we have: 
\[K_{\Mb_{g,n}} = \frac{13}{12} \tilde{\kappa}_1 -\frac{11}{12} \delta +\psi . \]

Now, the natural map \(\Mb_{g,n}^\ell \rightarrow \Mb_{g,n}\) is holomorphic and ramifies on the boundary divisors at the order \(\ell\), so we get  
\[  K_{\Mb_{g,n}^\ell} = K _{\Mb_{g,n}} + (1-\frac{1}{\ell}) \delta\]
and the result follows.
\end{proof}

\begin{Lemma}\label{l: canonical bundle elliptic contraction}
For \((g,n)\neq (2,0)\), denote by \(c : \Mb_{g,n}^5 \rightarrow \Mb_{g,n}^{\E}\)  the blow-down (see Theorem \ref{t: elliptic contraction}). We have 
\[ c^* K_{\Mb_{g,n}^{\E}} = K_{\Mb_{g,n}^5 } - \delta_{1,\emptyset}^5 \]
so its first Chern class in \( H^2 (\Mb_{g,n}^5,\mathbb Q) \simeq H^2 (\Mb_{g,n},\mathbb Q)\) 
satisfies 
\[ c_1(c^* K_{\Mb_{g,n}^{\E}}) = c_1 (K_{\Mb_{g,n}^5 } ) - \frac{1}{5} \delta_{1,\emptyset} \]
\end{Lemma}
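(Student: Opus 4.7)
The plan is to establish the first equality on the Galois cover $X \to \Mb_{g,n}^5$ from Lemma~\ref{l: Galois covering}, then descend it equivariantly. Writing $c_X : X \to Z$ for the iterated Nakano contraction constructed in the proof of Theorem~\ref{t: elliptic contraction}, we have a collection of smooth hypersurfaces $H_E \subset X$ forming the normal crossing divisor $\delta_{1,\emptyset}^X$, each carrying a $\mathbb{P}^1$-fibration $f_E : H_E \to K_E$ whose fibers $F$ satisfy $H_E \cdot F = -1$ by Lemma~\ref{l: icosahedral}. This last property says exactly that the normal bundle $N_{H_E/X}$ restricts to $\mathcal{O}_{\mathbb{P}^1}(-1)$ along each fiber, which is the hypothesis of the Nakano--Fujiki contraction formula.

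The key step is then to apply, inductively for $E \in I$, the adjunction-type relation $K_{Y_{k-1}} = c_k^* K_{Y_k} + H_{i_k}^{(k-1)}$ for each Nakano contraction $c_k : Y_{k-1} \to Y_k$. Because the successive fibrations are compatible (Lemma~\ref{l: fibrations}), the strict transform of $H_{i_j}$ in each $Y_k$ still has the right $\mathbb{P}^1$-fibration with normal bundle of degree $-1$ along the fibers, so the formula applies at each step. Summing telescopically yields
\[ K_X = c_X^* K_Z + \sum_{E} H_E = c_X^* K_Z + \delta_{1,\emptyset}^X. \]
Since $c_X$ and $\delta_{1,\emptyset}^X$ are equivariant with respect to the Galois group $G = \mathrm{Gal}(X \to \Mb_{g,n}^5)$, and since pullback of canonical bundles commutes with taking $G$-quotients in the orbifold sense, we obtain
\[ K_{\Mb_{g,n}^5} = c^* K_{\Mb_{g,n}^{\E}} + \delta_{1,\emptyset}^5, \]
which is the first claimed identity.

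For the second identity, I use the natural map $\pi : \Mb_{g,n}^5 \to \Mb_{g,n}$, which is a set-theoretic bijection but ramifies at order $5$ along each boundary divisor. Consequently, in the canonical identification $H^2(\Mb_{g,n}^5, \Q) \simeq H^2(\Mb_{g,n}, \Q)$ used throughout the paper, the orbifold divisor class $\delta_{1,\emptyset}^5$ corresponds to $\tfrac{1}{5} \delta_{1,\emptyset}$ (this is the same convention that underlies Lemma~\ref{l: canonical bundle}, where the factor $1 - 1/\ell$ appears). Substituting this into the first identity and taking first Chern classes gives
\[ c_1(c^* K_{\Mb_{g,n}^{\E}}) = c_1(K_{\Mb_{g,n}^5}) - \tfrac{1}{5}\delta_{1,\emptyset}, \]
as required. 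The main potential obstacle is verifying that the successive strict transforms in the iterated Nakano contraction retain the ``$-1$'' normal bundle property on fibers, but this is ensured by the compatibility statement of Lemma~\ref{l: fibrations}: when $H_{i_k}$ meets an already-contracted $H_{i_j}$, the intersection is swept out by fibers of $f_{i_j}$, so after contracting $H_{i_j}$ the fibration structure on $H_{i_k}$ descends cleanly and its normal bundle restriction along fibers of $f_{i_k}$ is unchanged.
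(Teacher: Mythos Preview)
Your proof is correct and follows essentially the same approach as the paper: apply the Nakano--Fujiki blow-down formula $c_k^* K_{Y_k} = K_{Y_{k-1}} - H_{i_k}$ iteratively on the smooth cover $X$, then descend through the Galois group. You are in fact more careful than the paper in justifying why the inductive step is valid (the compatibility of the $\mathbb{P}^1$-fibrations from Lemma~\ref{l: fibrations} ensuring the normal-bundle condition persists on strict transforms), and in explaining the $\tfrac{1}{5}$ factor for the second identity; the paper's proof is terser on both points.
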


\begin{proof}
If \(Y\) is a smooth complex analytic space, with \(\mathbb P^1\)-fibered hypersurfaces \(H_i\) as in the proof of Theorem \ref{t: elliptic contraction}, we have \( c_1^* (K_{Y_1}) = K_Y - H_1\). By induction, if we denote \(Z= Y_r\), \(H=\cup_i H_i\), and \(c= c_r \circ \cdots \circ c_1: Y\rightarrow Z\)  the composition of all contractions, we get \( c^* K_{Z} = K_Y - H\). If \(Y\), \(H\) and the \(\mathbb P^1 \)-fibrations on the \(H_i\)'s are invariant by a finite group \(G\subset \text{Aut}(Y) \), denoting by \(G'\subset \text{Aut} (Z)\) the image of the action of \(G\) on the quotient \( Z\) (which is unique), we then have 
\[ K_{Y/G} = c^* K_{Z/G'}  - H/G ,\]
which implies the lemma by construction of the elliptic contraction.
\end{proof}

\subsection{The Euler characteristic of $\Mb^\ell_{g,n}$} 

Let $x=(C,f)$ be a stable curve of genus $g$ with $n$ marked points that we think as a point in $\Mb_{g,n}$. 
We define $|\nodes(x)|$ to be the number of nodal points of $x$ and $|\Stab^\ell(x)|$ to be the size of the isotropy group of $x$ in $\Mb^\ell_{g,n}$ (set $\Stab^1=\Stab$). From the exact sequence \eqref{auto-l}, we get $|\Stab^\ell(x)|=\ell^{|\nodes(x)|}|\Stab(x)|$.
This suggest to define the polynomial 
$$\overline{\chi}_{g,n}(\kappa)=\int_{\Mb_{g,n}} \frac{\kappa^{|\nodes(x)|}}{|\Stab(x)|}d\chi(x)\in \Q[\kappa]$$
so that $\chi(\Mb_{g,n}^\ell)=\overline{\chi}_{g,n}(\frac 1 \ell)$. This polynomial satisfies the following quadratic recursion relation which allows to compute it effectively, see \cite{WZ} Theorem 3.6 and the formulas following the theorem (notice that they use instead $\tilde{\chi}_{g,n}=\overline{\chi}_{g,n}/n!$):

$$\overline{\chi}_{g,n}=\int_0^\kappa\Big(\overline{\chi}_{g-1,n+2}+\!\!\sum_{\underset{n+2=n_1+n_2}{g=g_1+g_2}}\!\!n_1n_2\binom{n}{n_1}\overline{\chi}_{g_1,n_1}\overline{\chi}_{g_2,n_2}\Big)d\kappa+\chi(\M_{g,n}).$$
This formula, together with Harer-Zagier formula $$\chi(\M_{g,n})=(-1)^n\frac{(2g-1)B_{2g}}{(2g)!}(2g+n-3)!$$
allows to compute the Euler characteristic of $\Mb_{g,n}^\ell$. 
We find for instance $\overline{\chi}_{0,5}(\kappa)=2-10\kappa+15\kappa^2$ hence $\chi(\Mb_{0,5}^5)=\frac{3}{5}$. 
\section{Hermitian cohomological invariants}\label{toledo}

Let $V$ be a finite dimensional complex vector space endowed with a non-degenerate Hermitian form $h$ of signature $(p,q)$. This means that there are coordinates $(x_1,\ldots,x_p,y_1,\ldots,y_q)$ such that 
\begin{equation}\label{eq: standard hermitian form}  
h=\sum_{i=1}^p|x_i|^2-\sum_{j=1}^q|y_j|^2.
\end{equation} 
For simplicity, we will often remove $h$ from the notation. Depending on the purpose, we will denote by $\PU(V)$ or $\PU(p,q)$ the group of projective unitary transformations of $V$. We will also write $d(V)=p+q$ and $\sigma(V)=p-q$. 

\subsection{Definition of the invariants}

Let $X$ be a connected topological space endowed with a representation $\rho:\pi_1(X)\to \PU(p,q)$.
The purpose of this section is to define a family $$\sch(\rho)=\sum_{k\ge 0}\sch_k(\rho)\in \prod_{k\ge 0}H^{2k}(X,\Q)$$
which is natural in the sense that whenever there is $f:X\to Y$ and $\rho:\pi_1(Y)\to \PU(p,q)$ then $\sch(\rho\circ f_*)=f^*\sch(\rho)$. 

One may think of $\rho$ as the holonomy of a flat $\PU(p,q)$-principal bundle over $X$. By forgetting the flat structure, this bundle is obtained by pulling back a universal $\PU(p,q)$-principal bundle $\mathrm{EPU}(p,q)\to \BPU(p,q)$ by a map $f:X\to \BPU(p,q)$, well-defined up to homotopy. We are then reduced to defining a class $\sch\in H^*(\BPU(p,q),\Q)$ and set $\sch(\rho)=f^*\sch$.

Recall that the natural map $\pi:\SU(p,q)\to \PU(p,q)$ fits into the following central exact sequence
$$0\to \mu_{p+q}\to \SU(p,q)\overset{\pi}{\to} \PU(p,q)\to 0$$
where $\mu_{p+q}\subset \C^*$ is the group of roots of unity of order $p+q$. This exact sequence gives rise to a fibration $\mathrm{B}\mu_{p+q}\to \mathrm{BSU}(p,q)\to \BPU(p,q)$. 
As $H^{k}(\mathrm{B}\mu_{p+q},\Q)=0$ for $k>0$, a usual argument involving the Leray-Serre spectral sequence gives that the map $$\pi^*:H^*(\BPU(p,q),\Q)\to H^*(\mathrm{BSU}(p,q),\Q)$$
is an isomorphism. Hence it is sufficient to define $\sch\in H^*(\mathrm{BSU}(p,q),\Q)$. 

Consider $P\to X$ a principal $\SU(p,q)$-bundle and form the Hermitian bundle $\mathcal{E}\to X$ associated to the tautological action of $\SU(p,q)$ on $V$. We can find an orthogonal decomposition $\mathcal{E}=\mathcal{E}^+\oplus\mathcal{E}^-$ such that the restriction of the Hermitian structure to $\mathcal{E}^{+}$ (resp. $\mathcal{E}^-$) is positive (resp. negative). 
Then we set $$\sch(\rho)=\ch(\mathcal{E}^+)-\ch(\mathcal{E}^-)$$
where $\ch$ denotes the Chern character. In particular, we have $\sch_0(\rho)=\sch_0(P)=p-q=\sigma(V)$. We also observe that if $q=0$ then $\mathcal{E}=\mathcal{E}^{+}$. If this bundle is constructed from a representation, it has a flat connection, giving $\sch(\rho)=d(V)$. The same argument works if $p=0$, giving $\sch(\rho)=-d(V)$. Hence in the sequel, we suppose that $pq>0$. 

Denote by $\H(V)=\H^{p,q}$ the space of orthogonal decompositions $V=V^+\oplus V^-$. It is the symmetric space associated to $\PU(p,q)$, in particular it is contractible. Given $P$, one can form the bundle $\mathcal H\to X$ associated to the action of $\SU(p,q)$ on $\H^{p,q}$. The decompositions $\mathcal{E}=\mathcal{E}^{+}\oplus \mathcal{E}^-$ are in bijection with the sections of $\mathcal{H}$, hence are unique up to homotopy. This shows that the class $\sch(\rho)$ is well-defined.

Notice that in order to compute the higher Toledo invariants, we need to linearize the representation $\rho:\pi_1(X)\to \PU(p,q)$, i.e. to lift it to $\SU(p,q)$. In the cases we will encounter, this is not possible unless we modify the space $X$. However, in some cases handled in the following proposition, we can find a short-cut. 

\begin{Proposition}\label{projplat}
Let $\mathcal{E}\to X$ be a Hermitian bundle endowed with a projectively flat connexion with holonomy $\rho:\pi_1(X)\to \PU(p,q)$. Then, given a decomposition $\mathcal{E}=\mathcal{E}^+\oplus \mathcal{E}^-$ as before, we have:
$$\sch(\rho)=(\ch(\mathcal{E}^+)-\ch(\mathcal{E}^-))e^{-\frac{c_1(\mathcal{E})}{p+q}}.$$
\end{Proposition}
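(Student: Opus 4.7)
The plan is to reduce the proposition to a universal identity at the level of classifying spaces and then pull back. Since $\mathcal{E}$ is a topological $\U(p,q)$-bundle, it is classified by a map $f:X\to \BU(p,q)$; the projective flatness of $\nabla$ ensures that the composition $Bq\circ f:X\to \BPU(p,q)$, where $Bq$ is induced by the quotient $\U(p,q)\to\PU(p,q)$, factors through $B\pi_1(X)$ and coincides with the classifying map of the flat $\PU(p,q)$-bundle with holonomy $\rho$. By naturality, $\sch(\rho)=f^*(Bq)^*\sch$, where $\sch\in H^*(\BPU(p,q),\Q)$ is the universal class. Since $\ch(\mathcal{E}^\pm)=f^*\ch(V^\pm)$ and $c_1(\mathcal{E})=f^*c_1(V)$ for the tautological decomposition $V=V^+\oplus V^-$ over $\BU(p,q)$, the proposition reduces to the universal identity
\begin{equation*}
(Bq)^*\sch=(\ch V^+-\ch V^-)\,e^{-c_1(V)/(p+q)}\quad\text{in } H^*(\BU(p,q),\Q).
\end{equation*}

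To prove this identity I would verify two properties shared by both classes. First, they agree in restriction along $B\iota:\BSU(p,q)\hookrightarrow\BU(p,q)$: the right-hand side pulls back to $\ch V^+_{SU}-\ch V^-_{SU}=\sch$ because $c_1(V)$ vanishes on $\BSU$, while the left-hand side pulls back to $(B\pi)^*\sch=\sch$ via the rational isomorphism $(B\pi)^*:H^*(\BPU(p,q),\Q)\xrightarrow{\sim}H^*(\BSU(p,q),\Q)$ used earlier in the paper (since $B\pi=Bq\circ B\iota$). Second, both are invariant under the $\BU(1)$-action on $\BU(p,q)$ given by tensoring $V$ with a line bundle $L$: the left tautologically, because it factors through $\BPU$; the right by the direct calculation
\begin{equation*}
\ch(V^\pm\otimes L)\,e^{-c_1(V\otimes L)/(p+q)}=\ch(V^\pm)\,e^{c_1(L)}\cdot e^{-c_1(V)/(p+q)}\,e^{-c_1(L)}=\ch(V^\pm)\,e^{-c_1(V)/(p+q)}.
\end{equation*}
The Leray--Serre spectral sequence of the fibration $\BU(1)\to\BU(p,q)\to\BPU(p,q)$ identifies the image of $(Bq)^*$ with the twist-invariant subalgebra of $H^*(\BU(p,q),\Q)$, and on this subalgebra $(B\iota)^*$ is injective (its composition with the iso $(B\pi)^*$ realizes the injective map $(Bq)^*$); hence the two twist-invariant classes with common restriction to $\BSU$ coincide.

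The main obstacle I anticipate is this uniqueness step, i.e., identifying the image of $(Bq)^*$ with the twist-invariant classes via the spectral sequence. A more concrete alternative is via Chern--Weil: writing $\nabla$ in block form $\nabla=\bigl(\begin{smallmatrix}D^+&A\\B&D^-\end{smallmatrix}\bigr)$, the identity $F(\nabla)=\alpha\id_{\mathcal{E}}$ (with $\alpha$ a closed $2$-form by Bianchi, satisfying $[\alpha/(2\pi i)]=c_1(\mathcal{E})/(p+q)$) gives $F(D^\pm)=\alpha\id-AB$ (resp.\ $-BA$). Since $\alpha\id$ is central, $\ch(\mathcal{E}^\pm)$ is represented by $e^{\alpha/(2\pi i)}\tr\exp(-AB/(2\pi i))$ (resp.\ with $BA$); multiplying by $e^{-c_1(\mathcal{E})/(p+q)}=e^{-\alpha/(2\pi i)}$ produces a closed form that coincides with the Chern--Weil representative of $\sch(\rho)$ computed on the flat $\SU(p,q)$-bundle $\mathcal{E}\otimes L$ obtained by twisting with a Hermitian line bundle $L$ of curvature $-\alpha$ satisfying $L^{\otimes(p+q)}\cong\det(\mathcal{E})^{-1}$. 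When such $L$ does not exist on $X$, one passes to a cover that trivializes the obstruction and descends via injectivity of rational cohomology---a step cleanly avoided by the universal classifying-space route above.
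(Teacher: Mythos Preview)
Your argument is correct and takes a genuinely different route from the paper. The paper does not work universally: instead it constructs, for the given $X$, the homotopy fibre $\tilde X$ of the obstruction map $X\to K(\mu_{p+q},2)$. On $\tilde X$ the representation lifts to $\SU(p,q)$, so the flat bundle $\mathcal E_\rho$ exists there; since $\P\mathcal E\simeq\P\mathcal E_\rho$ one finds a line bundle $L$ with $\mathcal E_\rho=\mathcal E\otimes L$ and $c_1(L)=-c_1(\mathcal E)/(p+q)$, and the formula drops out of multiplicativity of $\ch$. One then descends back to $X$ because $\tilde X\to X$ is a rational cohomology isomorphism (the fibre $K(\mu_{p+q},1)$ being rationally acyclic). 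Your approach bypasses this auxiliary space entirely by proving the identity once and for all in $H^*(\BU(p,q),\Q)$; this is cleaner and makes the twist-invariance of the formula manifest, whereas the paper's argument is more hands-on and keeps the geometry of the specific bundle $\mathcal E$ in view (which is convenient when one later wants Chern--Weil representatives).

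Your anticipated ``main obstacle'' is not really one. Rather than invoking the Leray--Serre spectral sequence of $\BU(1)\to\BU(p,q)\to\BPU(p,q)$, use directly the multiplication $\U(1)\times\SU(p,q)\to\U(p,q)$, which is a $\mu_{p+q}$-cover and hence induces a rational equivalence $\BU(1)\times\BSU(p,q)\to\BU(p,q)$. Under this splitting the twist action becomes translation on the $\BU(1)$ factor alone, so the twist-invariant subalgebra is exactly $1\otimes H^*(\BSU(p,q),\Q)$; on the other hand $Bq$ composed with this equivalence is projection to $\BSU(p,q)$ followed by $B\pi$, so $\im(Bq)^*$ is also $1\otimes H^*(\BSU(p,q),\Q)$. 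This makes your uniqueness step immediate and removes the need for your Chern--Weil fallback (which, incidentally, is essentially a differential-geometric rendering of the paper's line-bundle-twist argument).
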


We can check that this quantity does not change if we replace $\mathcal{E}$ by $\mathcal{E}\otimes L$ for some Hermitian line bundle $L$. In particular, if the representation lifts to $\U(p,q)$, and we take $\mathcal{E}$ to be the associated flat bundle, then $c_1(\mathcal{E})=0$ and the two formulas coincide. We insist on a crucial property: $\sch(\rho)$ is independent of the lift.

\begin{proof}[Proof of Proposition \ref{projplat}]
Let $\mathcal{E}\to X$ be as in the proposition. In order to compute $\sch(\rho)$, we need to linearize $\rho$, which is generally impossible. However we can look for a map $f:\tilde{X}\to X$ such that $f^*:H^*(X,\Q)\to H^*(\tilde{X},\Q)$ is an isomorphism and such that there is a diagram
$$\xymatrix{
\pi_1(\tilde{X})\ar[r]^{\tilde{\rho}}\ar[d]& \SU(p,q)\ar[d]\\ 
\pi_1(X)\ar[r]^\rho& \PU(p,q)}$$
Suppose first that we have solved this problem. The naturality of the construction gives $\sch(\tilde{\rho})=f^*\sch(\rho)$. Hence we are reduced to the case when $\rho$ takes its values in $\SU(p,q)$. 
In that case, we may compare the projectively flat bundle $\mathcal{E}$ with the flat Hermitian bundle $\mathcal{E}_\rho$ associated to $\rho$. 
The corresponding projective bundles are isomorphic: this implies that there is a Hermitian line bundle $L$ such that $\mathcal{E}_\rho=\mathcal{E}\otimes L$. As $c_1(\mathcal{E}_\rho)=0$, we get $c_1(L)=-\frac{c_1(\mathcal{E})}{p+q}$.

We check that $\sch(\mathcal{E}_\rho)=\ch(\mathcal{E}_\rho^+)-\ch(\mathcal{E}_\rho^-)=\ch(\mathcal{E}^+\otimes L)-\ch(\mathcal{E}^-\otimes L)=(\ch(\mathcal{E}^+)-\ch(\mathcal{E}^-))\ch(L)$. This coincides with the formula of Proposition \ref{projplat}.  

We now prove the existence of $f:\tilde{X}\to X$ with a twist: we will replace $X$ by a space homotopically equivalent to it. Recall that the obstruction of lifting $\rho:\pi_1(X)\to \PU(p,q)$ is a class $o(\rho)\in H^2(X,\mu_{p+q})$, represented by a map $f:X\to K(2,\mu_{p+q})$.
Replacing $X$ by the (homotopically equivalent) mapping path space 

$$E_f=\{(x,\gamma), x\in X, \gamma:[0,1]\to K(2,\mu_{p+q}), \gamma(0)=f(x)\},$$
\noindent
the map $g:E_f\to K(2,\mu_{p+q})$ given by $g(x,\gamma)=\gamma(1)$ is a fibration homotopic to $f$. Its fiber $F$ solves the problem. Indeed, the composition $F\to E_f\to K(2,\mu_{p+q})$ is constant, meaning that the obstruction $o(\rho)$ vanishes on $F$. Moreover, as the rational cohomology of $K(2,\mu_{p+q})$ is trivial, the inclusion $F\subset E_f\simeq X$ induces an isomorphism in rational cohomology (from the Leray-Serre spectral sequence). 
\end{proof}

For the sake of completeness, we study the problem of realizing a projective representation as the holonomy of a projectively flat bundle. 
\begin{Lemma}\label{projplatobs}
Given any representation $\rho:\pi_1(X)\to \PU(p,q)$, there exists a Hermitian complex bundle $\mathcal{E}\to X$ endowed with a projectively flat connection whose monodromy is conjugate to $\rho$ if and only if some obstruction class in $H^3(X,\Z)$ vanishes. 
\end{Lemma}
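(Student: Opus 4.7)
The plan is to reinterpret the existence of a projectively flat Hermitian bundle with projective monodromy $\rho$ as a topological lifting problem for the flat principal $\PU(p,q)$-bundle $P\to X$ attached to $\rho$, along the central extension
$$1\to \U(1)\to \U(p,q)\to \PU(p,q)\to 1.$$
The key observation is that such a Hermitian bundle $\E$ is the same datum as a principal $\U(p,q)$-bundle $\tilde P\to X$ lifting $P$ topologically, together with a connection on $\tilde P$ covering the flat connection on $P$: any such connection automatically has curvature valued in the centre $i\R\cdot\mathbf{1}$ of $\mathfrak u(p,q)$, so $\tilde P$ is projectively flat, and its projectivised holonomy agrees with that of $P$, hence is conjugate to $\rho$.

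First I would check that once a topological lift $\tilde P$ exists, a connection of the required type can always be constructed. On a trivialising cover the flat $\PU(p,q)$-connection lifts locally, since $\mathfrak u(p,q)\to \mathfrak{pu}(p,q)$ is a surjective linear map; as the kernel $i\R$ is abelian, a partition-of-unity argument glues these local lifts into a global connection on $\tilde P$ whose curvature lies in the associated line bundle with fibre $i\R$, proving projective flatness.

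Next I would identify the obstruction. The central extension above induces a long exact sequence of non-abelian sheaf cohomology, equivalently encoded by the fibration $\BU(1)\to \BU(p,q)\to \BPU(p,q)$, and yields a coboundary map $\delta:H^1(X,\underline{\PU(p,q)})\to H^2(X,\underline{\U(1)})$ that sends the isomorphism class of $P$ to the obstruction to its $\U(p,q)$-lifting. The exponential sequence $0\to \Z\to \R\to \U(1)\to 0$ provides, on a paracompact $X$, a natural isomorphism $H^2(X,\underline{\U(1)})\simeq H^3(X,\Z)$, so one may take $o(\rho):=\delta([P])\in H^3(X,\Z)$ as the required obstruction class. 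The reverse implication is immediate: a projectively flat Hermitian bundle with projective monodromy conjugate to $\rho$ gives, by forgetting the connection, a topological lift of $P$, forcing $\delta([P])=0$.

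The main obstacle I expect is not the obstruction-theoretic identification, which is standard, but the verification that the projectivised holonomy of the patched connection on $\tilde P$ is genuinely conjugate to $\rho$ rather than some twist. This reduces to comparing two a priori different flat $\PU(p,q)$-bundles --- the one coming from $\rho$ and the one obtained by projectivising the constructed connection on $\tilde P$ --- and checking in local trivialisations that they are canonically isomorphic, using that the gluing $\mathfrak u(1)$-forms measure precisely the failure of the local connection lifts to agree and are killed upon projectivisation.
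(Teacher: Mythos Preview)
Your proposal is correct and follows essentially the same route as the paper: both identify the obstruction to lifting the flat $\PU(p,q)$-bundle to a $\U(p,q)$-bundle as a class in $H^2(X,\underline{\U(1)})$ and then use the exponential sequence to move it into $H^3(X,\Z)$. The paper carries this out concretely with \v{C}ech cocycles on a good cover (lifting the constant transition functions $g_{ij}$ to $h_{ij}\in\U(p,q)$ and measuring the failure of the cocycle condition), whereas you phrase the same computation in the language of non-abelian cohomology and classifying spaces; your additional remarks on constructing the connection by partition of unity and on the holonomy comparison are more explicit than the paper, which leaves these points implicit.
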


\begin{proof}
Recall that the representation $\rho:\pi_1(X)\to \PU(p,q)$ gives rise to a flat $\PU(p,q)$-bundle $P\to X$. 
Consider a good open covering $(U_i)_{i\in I}$ of $X$ with trivializations of $P|_{U_i}$. The transition functions are constant maps $g_{ij}:U_i\cap U_j\to \PU(p,q)$ satisfying a cocycle condition. A Hermitian bundle $\mathcal{E}$ may be constructed by taking continuous maps $h_{ij}:U_i\cap U_j\to U(p,q)$ satisfing the same cocycle condition. The condition that $\mathcal{E}$ has a projectively flat connection with monodromy $\rho$ means that $\pi\circ h_{ij}=g_{ij}$ where $\pi:U(p,q)\to \PU(p,q)$ is the obvious projection. 

As $U_i\cap U_j$ is contractible, one can find such a map $h_{ij}$ independently for all $i\ne j$. The cocycle condition gives a map $U_i\cap U_j\cap U_k\to \U(1)=\ker \pi$ which has to vanish in order to prove the lemma. This defines a class in $H^2(X,C_X(\U(1)))$ where for any topological abelian group $G$, $C_X(G)$ denotes the sheaf of continuous $G$-valued functions. 
From the exact sequence of sheaves $0\to C_X(\Z)\to C_X(\R)\to C_X(\U(1))\to 0$ and the vanishing of $H^*(X,C_X(\R))$, we find an obstruction in $H^3(X,C_X(\Z))=H^3(X,\Z)$.
\end{proof}
\subsection{Compatibility with operations}
Let $V$ and $W$ be two finite dimensional Hermitian spaces. If we have two representations $\rho_V:\pi_1(X)\to \PU(V)$, $\rho_W:\pi_1(X)\to \PU(W)$, we cannot make sense of their sum but we can make sense of their tensor product $\rho_V\otimes \rho_W:\pi_1(X)\to \PU(V\otimes W)$. 

\begin{Proposition}\label{multiplicativity}
Given two projective representations as above we have 
$$\sch(\rho_V\otimes \rho_W)=\sch(\rho_V)\smile\sch(\rho_W)\in H^*(X,\Q).$$
\end{Proposition}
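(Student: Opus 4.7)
The plan is to reduce the identity to the case where both representations lift to linear ones with values in $\SU$, after which it follows from the multiplicativity of the Chern character applied to an explicit orthogonal decomposition of $\mathcal{E}_V\otimes \mathcal{E}_W$.

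First, following the mapping path space construction used in the proof of Proposition \ref{projplat}, I would replace $X$ by a space $\tilde X$ equipped with a map $f\colon \tilde X\to X$ which induces an isomorphism on rational cohomology, and such that the pullback $\rho_V\circ f_\ast$ admits a lift $\tilde\rho_V\colon \pi_1(\tilde X)\to \SU(V)$. Iterating the construction once more, applied now to the obstruction class of $\rho_W\circ f_\ast$, I can arrange that both representations lift simultaneously to linear ones. By naturality of $\sch$ and of the cup product under continuous maps, and because $f^\ast$ is an isomorphism on rational cohomology, it suffices to prove the identity when $\rho_V$ and $\rho_W$ already take values in $\SU(V)$ and $\SU(W)$.

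In this linear setting, let $\mathcal{E}_V$, $\mathcal{E}_W$ denote the flat Hermitian bundles on $\tilde X$ associated to $\tilde\rho_V$ and $\tilde\rho_W$, with orthogonal decompositions $\mathcal{E}_V=\mathcal{E}_V^+\oplus \mathcal{E}_V^-$ and $\mathcal{E}_W=\mathcal{E}_W^+\oplus \mathcal{E}_W^-$. The bundle attached to $\tilde\rho_V\otimes \tilde\rho_W$ is $\mathcal{E}_V\otimes \mathcal{E}_W$ endowed with the tensor Hermitian form, which on pure tensors reads $h_V(v,v)\,h_W(w,w)$. Hence the positive and negative parts of this decomposition are
\[
(\mathcal{E}_V\otimes\mathcal{E}_W)^+=(\mathcal{E}_V^+\otimes\mathcal{E}_W^+)\oplus (\mathcal{E}_V^-\otimes\mathcal{E}_W^-),\qquad (\mathcal{E}_V\otimes\mathcal{E}_W)^-=(\mathcal{E}_V^+\otimes\mathcal{E}_W^-)\oplus (\mathcal{E}_V^-\otimes\mathcal{E}_W^+).
\]
Applying the multiplicativity $\ch(A\otimes B)=\ch(A)\smile \ch(B)$ and expanding, the difference $\ch((\mathcal{E}_V\otimes\mathcal{E}_W)^+)-\ch((\mathcal{E}_V\otimes\mathcal{E}_W)^-)$ factorizes as $(\ch(\mathcal{E}_V^+)-\ch(\mathcal{E}_V^-))\smile(\ch(\mathcal{E}_W^+)-\ch(\mathcal{E}_W^-))$, which is precisely $\sch(\tilde\rho_V)\smile \sch(\tilde\rho_W)$, and the formula follows by pulling back to $X$.

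The main obstacle is the absence of a simultaneous linear lift of $\rho_V$ and $\rho_W$ on $X$ itself; this is resolved cleanly by the iterated mapping path space construction above, the same device already used to handle Proposition \ref{projplat}. A parallel route would be to realize $\rho_V$ and $\rho_W$ by projectively flat Hermitian bundles (when the obstructions of Lemma \ref{projplatobs} vanish), exploit the identity $c_1(\mathcal{E}_V\otimes\mathcal{E}_W)=\dim(W)\,c_1(\mathcal{E}_V)+\dim(V)\,c_1(\mathcal{E}_W)$ to see that the correcting factor $e^{-c_1(\mathcal{E}_V\otimes\mathcal{E}_W)/\dim(V\otimes W)}$ of Proposition \ref{projplat} splits as a product of the two individual corrections, and then reduce to the same linear tensor computation; but this variant is less general due to the obstruction.
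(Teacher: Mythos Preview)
Your proof is correct and follows essentially the same route as the paper: reduce to the linearized case (the paper simply invokes ``as explained in the previous section'', which is exactly your iterated mapping path space argument), then use the explicit orthogonal decomposition of $\mathcal{E}_V\otimes\mathcal{E}_W$ together with the multiplicativity of the Chern character. Your write-up is in fact more careful than the paper's about the simultaneous linearization of both representations.
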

\begin{proof}
As explained in the previous section, one can suppose that the representations are linearized in the sense that $\rho_V:\pi_1(X)\to \SU(V)$, $\rho_W:\pi_1(X)\to \SU(W)$. One may form the associated bundle $\mathcal{E}_{V\otimes W}$ of $\rho_V\otimes \rho_W$ by taking the tensor product of $\mathcal{E}_V$ and $\mathcal{E}_W$, the Hermitian bundles associated respectively to $\rho_V$ and $\rho_W$. Taking a decomposition $\mathcal{E}_V=\mathcal{E}_V^+\oplus \mathcal{E}_V^-$ and $\mathcal{E}_W=\mathcal{E}_W^+\oplus \mathcal{E}_W^-$, we get a decomposition
$$\mathcal{E}_{V\otimes W}=\Big(\mathcal{E}_V^+\otimes \mathcal{E}_W^+\oplus \mathcal{E}_V^-\otimes \mathcal{E}_W^-\Big)\oplus\Big(\mathcal{E}_V^+\otimes \mathcal{E}_W^-\oplus \mathcal{E}_V^-\otimes \mathcal{E}_W^+\Big).$$
From the properties of the Chern character, we readily get
$\ch(\mathcal{E}_V^+\otimes \mathcal{E}_W^+\oplus \mathcal{E}_V^-\otimes \mathcal{E}_W^-)-\ch(\mathcal{E}_V^+\otimes \mathcal{E}_W^-\oplus \mathcal{E}_V^-\otimes \mathcal{E}_W^+)=(\ch(\mathcal{E}_V^+)-\ch(\mathcal{E}_V^-))(\ch(\mathcal{E}_W^+)-\ch(\mathcal{E}_W^-))$ from which the result follows. 
\end{proof}

Let us now deal with the more subtle sum of two Hermitian spaces $V$ and $W$. We set $\PU(V,W)=\mathrm{P}( \U(V)\times \U(W))$. There are two natural projections $p_V:\PU(V,W)\to \PU(V)$, $p_W:\PU(V,W)\to \PU(W)$ and an inclusion $i:\PU(V,W)\to \PU(V\oplus W)$. 

\begin{Proposition}\label{additivity}
Given a representation $\rho:\pi_1(X)\to \PU(V,W)$, we have
$$\sch(p_V\circ \rho)+\sch(p_W\circ\rho)=\sch(i\circ \rho).$$
\end{Proposition}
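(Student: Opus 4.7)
The plan is to mimic the lifting trick from the proof of Proposition~\ref{projplat}: replace $X$ by a rationally equivalent space on which $\rho$ can be linearized, apply additivity of the Chern character, then push the equality back to $X$.

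First I would look for a Lie subgroup $H\subset \U(V)\times \U(W)$ that surjects onto $\PU(V,W)$ with \emph{finite} kernel. The natural candidate is $S(\U(V)\times \U(W))=\{(u_V,u_W):\det u_V\cdot \det u_W=1\}$. Its image under the projection $\U(V)\times \U(W)\twoheadrightarrow \PU(V,W)$ is surjective, and the kernel consists of the diagonal scalars $(z\cdot\id_V,z\cdot\id_W)$ with $z^{d(V)+d(W)}=1$, that is $\mu_{d(V\oplus W)}$. Consequently the obstruction to lifting $\rho$ through this surjection lies in $H^2(X,\mu_{d(V\oplus W)})$, which has trivial rational cohomology since the group is finite. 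Exactly as in the proof of Proposition~\ref{projplat}, the mapping-path-space construction then yields a map $f:\tilde X\to X$ inducing an isomorphism in rational cohomology such that $\rho\circ f_*$ lifts to a representation $\tilde\rho=(\tilde\rho_V,\tilde\rho_W):\pi_1(\tilde X)\to S(\U(V)\times \U(W))\subset \U(V)\times \U(W)\subset \U(V\oplus W)$.

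On $\tilde X$ the situation is now fully linearized. I would form the flat Hermitian bundles $\mathcal E_V$ and $\mathcal E_W$ associated respectively to $\tilde\rho_V$ and $\tilde\rho_W$; by construction $\mathcal E_V\oplus\mathcal E_W$ is the flat Hermitian bundle associated to $i\circ\tilde\rho$. Picking orthogonal positive/negative splittings $\mathcal E_V=\mathcal E_V^+\oplus\mathcal E_V^-$ and $\mathcal E_W=\mathcal E_W^+\oplus\mathcal E_W^-$, the splitting $(\mathcal E_V^+\oplus\mathcal E_W^+)\oplus(\mathcal E_V^-\oplus\mathcal E_W^-)$ is valid for $\mathcal E_V\oplus\mathcal E_W$, and any other choice is homotopic since the relevant symmetric space is contractible. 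Additivity of the Chern character on direct sums then gives, on $\tilde X$, the identity $\sch(i\circ\rho\circ f_*)=\sch(p_V\circ\rho\circ f_*)+\sch(p_W\circ\rho\circ f_*)$.

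Finally, by naturality, this equality is $f^*$ of the desired relation on $X$, and the injectivity of $f^*$ in rational cohomology concludes. I do not foresee a serious obstacle: the only point to verify carefully is that the mapping-path-space argument of Proposition~\ref{projplat} really applies with the kernel $\mu_{d(V\oplus W)}$ in place of $\mu_{p+q}$, which is immediate since what is used there is only the finiteness of the kernel group.
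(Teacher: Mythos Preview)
Your proposal is correct and follows essentially the same route as the paper: lift $\rho$ to $\SU(V,W)=S(\U(V)\times\U(W))$ via the mapping-path-space trick of Proposition~\ref{projplat}, split each flat summand, and use additivity of the Chern character. The only point the paper makes more explicit is that, since $\tilde\rho_V$ lands in $\U(V)$ rather than $\SU(V)$, one invokes Proposition~\ref{projplat} (with $c_1(\mathcal E_V)=0$ rationally because $\mathcal E_V$ is flat) to justify $\sch(p_V\circ\rho\circ f_*)=\ch(\mathcal E_V^+)-\ch(\mathcal E_V^-)$.
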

\begin{proof}
Again we can suppose that $\rho$ takes its values in $\SU(V,W)$. Its associated bundle is the sum $\mathcal{E}_V\oplus \mathcal{E}_W$ of the bundle associated to the projections $p_V:\SU(V,W)\to U(V)$, $p_W:\SU(V,W)\to U(W)$. We may decompose the bundles $\mathcal{E}_V$ and $\mathcal{E}_W$ as usual: this gives $\sch(i\circ \rho)=\ch(\mathcal{E}_V^+)+\ch(\mathcal{E}_W^+)-\ch(\mathcal{E}_V^-)-\ch(\mathcal{E}_W^-)$. From Proposition \ref{projplat} and the fact that $\mathcal{E}_V$ and $\mathcal{E}_W$ are flat, we get $\sch(p_V\circ \rho)=\ch(\mathcal{E}_V^+)-\ch(\mathcal{E}_V^-)$ and the same for $W$, showing the result.  
\end{proof}
\subsection{A differential definition of the super Chern character}

In this subsection, we give an alternative definition of the super Chern character having a differential flavour, in the case of a representation defined on the fundamental group of a smooth orbifold.

Recall that for a Hermitian vector space $V$ of signature $(p,q)$ we denoted by $\H^{p,q}$ the space of either, positive $p$-dimensional subspaces $V^+$, negative $q$-dimensional subspaces $V^-$, or orthogonal decompositions $V=V^+\oplus V^-$. 
The group $\PU(p,q)$ acts transitively on these decompositions and the stabilizer of $V^+\oplus V^-$ is the maximal compact subgroup $\PU(V^+,V^-)=\mathrm{P}(\U(p)\times \U(q))$, showing that $\H^{p,q}$ is the symetric space of $\PU(p,q)$.  

Let us define a family of \(\PU (p,q)\)-invariant differential forms \(\omega_k\) of degree \(2k\) on \(\H^{p,q}\).  The tangent space of \( \H^{p,q}\) at a point \(V^+\) is naturally identified with the space \( \Hom (V^+, V^-)\) and the complex structure on this space induces a complex structure on $\H^{p,q}$: together with $\omega_1$, this gives the Kähler structure on $\H^{p,q}$. The adjonction map $\alpha\mapsto \alpha^*$ gives an anti-linear isomorphism $\Hom(V^+,V^-)\simeq \Hom(V^-,V^+)$. 

\begin{Remark}
It is also possible to identify $T_{(V^+,V^-)}\H^{p,q}$ with $\Hom(V^-,V^+)$, but it gives the opposite complex structure on $\H^{p,q}$. This also corresponds to changing the Hermitian form $h$ to its opposite, or said informally, exchanging $p$ and $q$. We have to take great care of this subtlety which occurs everywhere in the article.
\end{Remark}

For any family \(\xi_1, \ldots, \xi_{2k}\in \Hom (V^+, V^-)\), we set: 
\begin{small}
\[ \omega_k ( \xi_1, \ldots, \xi_{2k})=
\frac{2^{1-k}}{(2i\pi)^k k !} \text{Tr} \sum _{\sigma\in S_{2k} } \varepsilon(\sigma)  \prod_{i=1}^k \Big(\xi_{\sigma(2i-1)}^* \xi_{\sigma(2i)} - \xi_{\sigma(2i)}^* \xi_{\sigma(2i-1)}\Big).\]
\end{small}
In this formula \(S_{2k}\)  is the group of permutations of \(\{1,\ldots, 2k\}\) and \(\varepsilon (\sigma)\) is the signature of \(\sigma\in S_{2k}\).

\begin{Lemma}
Assume that \(X\) is a developable orbifold and \(\rho : \pi _1 (X) \rightarrow \SU (p,q)\) is a morphism. Then, for any smooth \(\rho \)-equivariant map \( f: \widetilde{X} \rightarrow \H^{p,q}\), 
the form \( f^* \omega_k \), which is invariant by \(\pi _1(X)\) and thus descends to a differential form of degree \(2k\) on \(X\), is a De Rham representative of \( \sch_k (\rho)\) in \( H^{2k}(X,\mathbb R) \).
\end{Lemma}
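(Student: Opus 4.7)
The plan is to interpret $f^*\omega_k$ as a Chern--Weil representative of $\sch_k(\rho)$ built from the connections on $\mathcal E^\pm$ determined by the section. The $\rho$-equivariant map $f$ is the same datum as a smooth section $s$ of the associated bundle $\mathcal H=\widetilde X\times_{\pi_1(X)}\H^{p,q}\to X$, which in turn determines an orthogonal decomposition $\mathcal E=\mathcal E^+\oplus \mathcal E^-$ of the flat Hermitian bundle $\mathcal E=\widetilde X\times_{\pi_1(X)}V$. By definition $\sch_k(\rho)$ is the degree-$2k$ part of $\ch(\mathcal E^+)-\ch(\mathcal E^-)$, so it is enough to produce a de~Rham representative of this difference whose value is $f^*\omega_k$.

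I would next produce such a representative by Chern--Weil theory. Let $\nabla$ be the flat Hermitian connection on $\mathcal E$. Its orthogonal projections $\nabla^\pm=p^\pm\circ\nabla$ are metric connections on $\mathcal E^\pm$, and in block form on $\mathcal E^+\oplus\mathcal E^-$ one has
\[
\nabla=\begin{pmatrix}\nabla^+ & \beta^*\\ \beta & \nabla^-\end{pmatrix},
\]
with $\beta\in\Omega^1(X,\Hom(\mathcal E^+,\mathcal E^-))$ and $\beta^*$ the $h$-adjoint of $\beta$. Flatness of $\nabla$ yields the Gauss--Codazzi identities $(\nabla^+)^2=-\beta^*\wedge\beta$ and $(\nabla^-)^2=-\beta\wedge\beta^*$, hence
\[
\ch_k(\mathcal E^+)-\ch_k(\mathcal E^-)=\frac{(-1)^k}{k!(2i\pi)^k}\bigl[\tr(\beta^*\wedge\beta)^k-\tr(\beta\wedge\beta^*)^k\bigr].
\]

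The essential geometric input is the identification $\beta=df$: under the canonical isomorphism $T_{f(x)}\H^{p,q}\cong\Hom(V^+(x),V^-(x))$, the derivative of $f$ at a point $x\in\widetilde X$ is precisely the second fundamental form of $\mathcal E^+$ inside $\mathcal E$, since the infinitesimal variation of $\mathcal E^+(x)$ along $\nabla$ is by construction read off from the motion of the subspace $V^+(f(x))$ inside $V$. Substituting $\beta=df$ and $\beta^*=(df)^*$ into the above, then expanding each wedge power as an alternating multilinear form in $2k$ tangent vectors and using cyclicity of the trace together with anticommutativity of $1$-forms, yields precisely the pullback $f^*\omega_k$. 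The combinatorial prefactor $2^{1-k}/(k!(2i\pi)^k)$ appearing in the definition of $\omega_k$ is accounted for by the antisymmetrization within each pair $(\sigma(2i-1),\sigma(2i))$ in the sum over $S_{2k}$.

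The main obstacle is this last combinatorial step: one has to match term by term the expansion of $\tr(\beta^*\wedge\beta)^k-\tr(\beta\wedge\beta^*)^k$ with the alternating sum $\sum_{\sigma\in S_{2k}}\varepsilon(\sigma)\prod_i(\xi^*_{\sigma(2i-1)}\xi_{\sigma(2i)}-\xi^*_{\sigma(2i)}\xi_{\sigma(2i-1)})$, keeping track of all sign subtleties arising from the indefiniteness of $h$ and from wedging $1$-forms valued in homomorphism bundles. Once this is done, the $\PU(p,q)$-invariance of $\omega_k$ ensures that the computation is independent of the local trivialization chosen and defines a global closed form on $X$ whose de~Rham class equals $\sch_k(\rho)$ as required.
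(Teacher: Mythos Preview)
Your proposal is correct and follows essentially the same Chern--Weil strategy as the paper: compute the curvature of the connection on $\mathcal E^+$ obtained by orthogonally projecting the flat connection, identify the second fundamental form with $df$, and read off $\omega_k$ from the curvature formula $\Omega(\xi,\eta)=\eta^*\xi-\xi^*\eta$. The paper streamlines the final step: rather than computing $\ch_k(\mathcal E^+)-\ch_k(\mathcal E^-)$ as a difference of two traces and matching it combinatorially against $\omega_k$, it observes that $\ch(\mathcal E^+)+\ch(\mathcal E^-)=p+q$ (the total bundle being flat), so $\sch_k(\rho)=2\ch_k(\mathcal E^+)$ for $k>0$; then the single identity $\tr\exp(-\Omega_\nabla/2i\pi)=\tfrac12\sum_k\omega_k$ on $\H^{p,q}$ finishes the proof after pullback, bypassing the bookkeeping you flag as the ``main obstacle''.
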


\begin{proof}
Recall that by assumption, the orbifold universal cover $\tilde{X}$ of $X$ is smooth. By \cite[Theorem 2.4]{Yamasaki}, there exists smooth $\rho$-equivariant maps $f:\tilde{X}\to \H^{p,q}$ and those are unique up to homotopy.

Let \(\mathcal{E}^+\rightarrow \H^{p,q}\) be the rank $p$ positive tautological vector bundle, whose fiber over the point $(V^+,V^-)$ is the subspace \(V^+\). We observe that \(\mathcal E^+\) is naturally a sub-bundle of the trivial bundle \(\V=V\times \H^{p,q}\) and denote by \(\pi : \V \rightarrow \mathcal{E}^+\) the orthogonal projection with respect to the Hermitian form. We use the trivial connection \(D\) on \(\V\) to define a connection \(\nabla\) on \(\mathcal E^+\)  by 
\[\nabla_\xi s = \pi  D_\xi s ,\]
where \(s\) is any smooth section of \(\mathcal E^+\) and \(\xi\) any vector field on \(\H^{p,q}\). An painful but elementary computation shows that the curvature \(\Omega_{\nabla}(\xi,\eta) =\nabla _\xi\nabla _\eta - \nabla _\eta \nabla _\xi - \nabla _{[\xi,\eta]} \) of this connection is given by the simple formula
\[ \Omega _{\nabla} (\xi,\eta) = \eta^* \xi - \xi^* \eta ,\]
where as before \(\xi,\eta\) are considered as elements of \( \Hom (\mathcal E^+, \mathcal E^-)\). 
Notice that 
\[ \text{Tr} \exp  \left( \frac{-\Omega_\nabla }{2i\pi} \right)  = \frac{1}{2}\sum_{k\ge 0}\omega_k \]
hence by Chern-Weil theory, the forms $\frac{1}{2}\omega_k$ represent the Chern character of $\mathcal{E}^+$ on $\H^{p,q}$. 
Consider a smooth \(\rho\)-equivariant map \( f: \widetilde{X} \rightarrow \H^{p,q} \). Pulling back $\mathcal{E}^{\pm}$ gives rise to orthogonal sub-bundles \( \mathcal E ^{\pm} _\rho\) of the flat hermitian bundle $\mathcal{E}_\rho$ of fiber $V$ and monodromy \(\rho\) over \(X\). We have \( \ch(\mathcal E_{\rho}^+) + \ch (\mathcal E_{\rho}^-)=\ch (\mathcal{E}_\rho) = p+q\) so \(\sch_k (\rho)= \ch_k(\mathcal E_{\rho}^+) - \ch_k (\mathcal E_{\rho}^-) = 2 f^* \ch_k(\mathcal E^+)\) for $k>0$. We then deduce the result from the fact that the pull-back of the connection \(\nabla\) to \(\mathcal E^+_\rho\) defines a connection whose curvature is \( f^* \Omega_{\nabla}\).  
\end{proof}
It would be interesting to give an analogous geometric construction in the projective case, the following section gives one possible way.

\subsection{Relation to the tangent bundle of the symmetric space}
Let $X$ be a connected topological space and $\rho:\pi_1(X)\to \PU(p,q)$ be a representation. We choose $f:\tilde{X}\to \H^{p,q}$, a continuous $\rho$-equivariant map. 
We form the complex vector bundle \( \mathcal F \) over \(X\) defined as the quotient of \(f^* T \mathcal \H^{p,q}\) by the action of \(\pi_1(X)\) given by 
\[ \gamma \cdot (x,\xi) = (\gamma(x), D_{f(x) }  \rho (\gamma) \xi)\] for any \(x \in \tilde{X}\), any \(\xi\in T_{f(x)} \H^{p,q}\), and any \(\gamma\in \pi_1(X)\). The map \(f\) is well-defined up to \(\rho\)-equivariant homotopy, so the complex vector bundle \(\mathcal F\) is well-defined. 

\begin{Lemma} \label{l: pull-back tangent bundle}
 For odd \(k\), we have $\sch_k (\rho) = \frac{-2}{p+q} ch_k (\mathcal F).$
\end{Lemma}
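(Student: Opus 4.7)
The plan is to realize $\mathcal{F}$ as the pullback of a natural $\PU(p,q)$-equivariant bundle built from the tautological sub-bundles $\mathcal{E}^\pm$ on $\H^{p,q}$, and then to apply the splitting principle. First I would identify $T\H^{p,q}$: using $T_{(V^+,V^-)}\H^{p,q}\cong \Hom(V^+,V^-)$, we get
\[
T\H^{p,q}\;\cong\;\mathcal{E}^-\otimes (\mathcal{E}^+)^*,
\]
and the right-hand side, unlike either factor separately, descends to a genuine $\PU(p,q)$-equivariant bundle since scaling both $V^+$ and $V^-$ by the same unit cancels in the tensor product. Hence $\mathcal{F}$ coincides with $\mathcal{E}^-_\rho\otimes(\mathcal{E}^+_\rho)^*$ whenever we choose a linearization of $\rho$; and for the same reason, $\mathcal{F}$ is insensitive to replacing $\mathcal{E}$ by $\mathcal{E}\otimes L$ for a line bundle $L$.

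Second, I would mimic the reduction argument in the proof of Proposition \ref{projplat}: replacing $X$ by a space with the same rational cohomology, we may assume $\rho$ lifts to $\SU(p,q)$, so that $\mathcal{E}_\rho=\mathcal{E}^+_\rho\oplus \mathcal{E}^-_\rho$ is a flat Hermitian bundle and $\sch(\rho)=\ch(\mathcal{E}^+_\rho)-\ch(\mathcal{E}^-_\rho)$. Applying the splitting principle, write formal Chern roots $x_1,\dots,x_p$ for $\mathcal{E}^+_\rho$ and $y_1,\dots,y_q$ for $\mathcal{E}^-_\rho$. Flatness of $\mathcal{E}_\rho$ yields, for every $n\geq 1$, the power-sum relation
\[
s_n:=\sum_i x_i^n=-\sum_j y_j^n.
\]
The two quantities to compare then read
\[
k!\,\sch_k(\rho)=\sum_i x_i^k-\sum_j y_j^k=2s_k,\qquad k!\,\ch_k(\mathcal{F})=\sum_{i,j}(y_j-x_i)^k.
\]

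Finally, I would expand the binomial $(y_j-x_i)^k$ for $k$ odd. Isolating the extreme terms $m=0$ and $m=k$ and using the power-sum relation gives contributions $-q\,s_k$ and $-p\,s_k$ respectively, while the interior terms $1\leq m\leq k-1$ take the form $\binom{k}{m}(-1)^m s_m s_{k-m}$ after simplification (here the hypothesis $k$ odd is used to combine the signs). The involution $m\mapsto k-m$ then sends this interior sum to its opposite (again because $k$ is odd), so it vanishes. We obtain $k!\,\ch_k(\mathcal{F})=-(p+q)s_k=-\tfrac{p+q}{2}\cdot k!\,\sch_k(\rho)$, which is the claimed identity. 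The only non-routine step is the bookkeeping in the binomial expansion and the verification that the $\PU(p,q)$-equivariant identification of $T\H^{p,q}$ is compatible with the possible non-linearizability of $\rho$; both are handled by the two preliminary observations above.
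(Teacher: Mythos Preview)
Your proof is correct and follows essentially the same route as the paper: reduce to the case where $\rho$ lifts to $\SU(p,q)$, identify $\mathcal{F}\cong(\mathcal{E}_\rho^+)^*\otimes\mathcal{E}_\rho^-$, and then expand $\ch(\mathcal{F})$ using the flatness relation $\ch_n(\mathcal{E}_\rho^+)+\ch_n(\mathcal{E}_\rho^-)=0$ for $n\ge 1$ to cancel the cross terms when $k$ is odd. The paper works directly with the Chern characters while you pass through Chern roots via the splitting principle, but this is only a cosmetic difference (and note that your labeling of the extreme terms $m=0$ and $m=k$ is swapped, though the sum $-(p+q)s_k$ is of course unaffected).
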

 In particular \( \frac{(p+q)k!}{2}\sch_k(\rho)\) is an integral class. For even $k$ and $p\ne q$, we can also express $\sch_k$ as a polynomial in the Chern character of $\mathcal{F}$. For instance 
 \[\sch_2 (\rho) = \frac{-2}{p-q}\left(\ch_2(\mathcal F) - \frac{1}{(p+q)^2}\ch_1(\mathcal F) ^2  \right).\]
 \begin{proof}
 The construction being natural in $X$, we can suppose as in the proof of Proposition \ref{projplat} that the representation $\rho$ lifts to $\SU(p,q)$. We can then define the two associated bundles $\mathcal{E}_\rho^\pm$ so that $\mathcal{F}=\Hom(\mathcal{E}_\rho^+,\mathcal{E}_\rho^-)=(\mathcal E_\rho^+)^*\otimes \mathcal E^-_\rho$. The proof follows by inspection of the following identities; $\sch(\rho)=\ch(\mathcal E_\rho^+)-\ch(\mathcal E_\rho^-)$, $\ch(\mathcal E_\rho^+)+\ch(\mathcal E_\rho)^-=p+q$ and 
 $$\ch(\mathcal F)=\Big(p-\ch_1(\mathcal E_\rho^+)+\ch_2(\mathcal{E}_\rho^+)-\cdots\Big)\Big(q+\ch_1(\mathcal E_\rho^-)+\ch_2(\mathcal E_\rho^-)+\cdots\Big).$$
 \end{proof}
This lemma has the following important consequence:

\begin{Corollary}\label{cor:unif}
Suppose that a complex orbifold $X$ is locally modeled on the symmetric space $\H^{p,q}$ and let $\rho:\pi_1(X)\to \PU(p,q)$ be its monodromy representation. 
Then, denoting by $K_X$ the canonical bundle of $X$, we have 
$$\sch_1(\rho)=\frac{2}{p+q}c_1(K_X).$$
\end{Corollary}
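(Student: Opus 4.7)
The strategy is to apply Lemma~\ref{l: pull-back tangent bundle} with a particularly natural choice of the equivariant map $f$, namely the developing map of the $\H^{p,q}$-structure on $X$.

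By hypothesis $X$ carries a $(\PU(p,q),\H^{p,q})$-structure, so there is a developing map $f:\tilde{X}\to \H^{p,q}$ defined on the orbifold universal cover of $X$, equivariant with respect to the holonomy representation $\rho:\pi_1(X)\to \PU(p,q)$, and which is a local biholomorphism. In particular, the differential of $f$ identifies $f^*T\H^{p,q}$ with the tangent bundle $T\tilde{X}$ as a complex vector bundle (here the Remark preceding the definition of $\omega_k$ is important: we use the complex structure on $\H^{p,q}$ coming from the identification $T_{(V^+,V^-)}\H^{p,q}\simeq \Hom(V^+,V^-)$, which is the one for which $f$ is holomorphic). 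Passing to the quotient by the $\rho$-equivariant action of $\pi_1(X)$, the bundle $\mathcal F$ constructed in Lemma~\ref{l: pull-back tangent bundle} is therefore canonically isomorphic to $TX$.

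Now I apply the odd case $k=1$ of Lemma~\ref{l: pull-back tangent bundle}, which reads
\[
\sch_1(\rho)=\frac{-2}{p+q}\,\ch_1(\mathcal F)=\frac{-2}{p+q}\,c_1(TX)=\frac{2}{p+q}\,c_1(K_X),
\]
using that $\ch_1=c_1$ for any complex vector bundle and that $K_X=\det T^*X$ satisfies $c_1(K_X)=-c_1(TX)$. This is the desired formula.

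The only genuine point to verify carefully is the identification $\mathcal F\simeq TX$: one must check that the complex structure on $\H^{p,q}$ chosen in the definition of $\omega_k$ (equivalently, in the bundle $\mathcal F=\Hom(\mathcal E^+_\rho,\mathcal E^-_\rho)$) is indeed the one for which $f$ is holomorphic, otherwise a sign might flip. Everything else is formal once the developing map is in place, and no additional argument is needed since the lemma handles the subtlety of $\rho$ not lifting to $\SU(p,q)$.
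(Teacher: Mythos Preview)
Your proof is correct and is essentially the approach the paper intends: the corollary is stated without proof as an immediate consequence of Lemma~\ref{l: pull-back tangent bundle}, and you have spelled out exactly the intended argument---use the developing map as the equivariant $f$, so that $\mathcal F\simeq TX$ and the $k=1$ case of the lemma gives the result. Your attention to the sign/complex-structure issue flagged in the Remark is also apt.
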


In particular, the holonomy of a \(\H^{1,1}\)-structure on a closed oriented surface \(S\) of genus \(g\geq 2\) satisfies \( \int_S \sch_1(\rho) = 2g-2\). 

\begin{Remark}
Corollary \ref{cor:unif} gives a necessary condition for the uniformization of a representation \(\rho: \pi_1 (X)\rightarrow \text{PU} (p,q)\) by a \(\mathbb H^{p,q}\)-structure which is almost sufficient, thanks to Siu's rigidity theory (see Lemma \ref{l: criterion}). 
\end{Remark}

\subsection{The Toledo class as an obstruction class}\label{obstruction}

The purpose of this section is to identify the Toledo class $\sch_1(\rho)$ with an obstruction class. 

For any connected Lie group $G$, we derive from the homotopy sequence of the fibration $G\to EG\to BG$ that $\pi_n(BG)=\pi_{n-1}(G)$, in particular $BG$ is simply connected and from the Hurewicz theorem, we get $H_2(BG,\Z)=\pi_2(BG)=\pi_1(G)$. 
The universal coefficient theorem gives the isomorphism $H^2(BG,\Q)=\Hom(\pi_1(G),\Q)$. Taking $G=\PU(p,q)$, the class $\sch_1\in H^2(\PU(p,q),\Q)$ corresponds to a map $\phi:\pi_1(\PU(p,q))\to\Q$. 

Recall that the maximal compact subgroup of $\PU(p,q)$ is $\mathrm{P}(\U(p)\times \U(q))$ and have the same fundamental group. The exact sequence of the fibration $\U(1)\to \U(p)\times \U(q)\to \mathrm{P}(\U(p)\times \U(q))$ gives the description $\pi_1(\PU(p,q))=\Z^2/(p,q)\Z$. 
\begin{Lemma}
The map $\phi:\pi_1(\PU(p,q))=\Z^2/(p,q)\Z\to \Q$ associated to $\sch_1$ by the above procedure is $$\phi(x,y)=\frac{2}{p+q}(xq-py).$$
\end{Lemma}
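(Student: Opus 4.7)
The plan is to reduce to generators. Since $\phi$ is $\Q$-linear and $\pi_1(\PU(p,q))=\Z^2/(p,q)\Z$ is generated by the classes of the two explicit loops
\[
\alpha_1(t)=[\mathrm{diag}(e^{2\pi it},1,\dots,1),I_q],\qquad \alpha_2(t)=[I_p,\mathrm{diag}(e^{2\pi it},1,\dots,1)]
\]
in the maximal compact $\mathrm{P}(\U(p)\times\U(q))\subset\PU(p,q)$, which project via the two determinants to $(1,0),(0,1)\in\Z^2$, it is enough to compute $\phi(1,0)$ and $\phi(0,1)$ and observe that the relation $\phi(p,q)=0$ is forced automatically. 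Under the adjunction $[S^1,G]=[S^2,BG]$, each $[\alpha_i]$ corresponds to the clutching principal $\PU(p,q)$-bundle $P_i\to S^2$, and $\phi([\alpha_i])=\int_{S^2}\sch_1(P_i)$. Moreover each $\alpha_i$ lifts to an honest loop $\tilde\alpha_i$ in $\U(p)\times\U(q)\subset\U(p,q)$, so clutching produces genuine Hermitian bundles $\mathcal E_i=\mathcal E_i^+\oplus\mathcal E_i^-$ on $S^2$ lifting $P_i$, with
\[
(c_1(\mathcal E_1^+),c_1(\mathcal E_1^-))=(1,0),\qquad (c_1(\mathcal E_2^+),c_1(\mathcal E_2^-))=(0,1)
\]
read off directly from the clutching formulas.

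The key auxiliary step is the universal formula: for any principal $\PU(p,q)$-bundle $P\to X$ admitting a Hermitian lift $\mathcal E=\mathcal E^+\oplus\mathcal E^-$ to $\U(p,q)$ one has
\[
\sch_1(P)=\frac{2}{p+q}\bigl(q\,c_1(\mathcal E^+)-p\,c_1(\mathcal E^-)\bigr).
\]
I would derive this by working universally on $BU(p,q)\xrightarrow{\pi}\BPU(p,q)$. The Leray--Serre spectral sequence of the fibration $BU(1)\to BU(p,q)\to\BPU(p,q)$ has no differential targeting $E_2^{2,0}$, so $\pi^*\colon H^2(\BPU(p,q),\Q)\to H^2(BU(p,q),\Q)$ is injective. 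Its image is precisely the one-dimensional subspace fixed under $\mathcal E\mapsto\mathcal E\otimes L$, since such a twist does not alter the projectivization; using $c_1(\mathcal E^\pm\otimes L)=c_1(\mathcal E^\pm)+p_\pm c_1(L)$ (with $p_+=p$, $p_-=q$), this forces the pullback to be proportional to $q c_1^+-p c_1^-$. The proportionality constant is pinned down by restricting to $B\SU(p,q)$, where $c_1^++c_1^-=0$ and by construction $\pi^*\sch_1$ reduces to $\sch_1^{\SU}=c_1^+-c_1^-=2c_1^+$; matching coefficients gives the factor $\tfrac{2}{p+q}$.

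Combining the two steps yields $\phi(1,0)=\tfrac{2q}{p+q}$ and $\phi(0,1)=-\tfrac{2p}{p+q}$; $\Q$-linearity then gives $\phi(x,y)=\tfrac{2}{p+q}(qx-py)$, and the consistency $\phi(p,q)=0$ reflects the defining relation of $\pi_1(\PU(p,q))$. The main obstacle is the universal identity above, which extends Proposition~\ref{projplat} to Hermitian lifts that are not projectively flat (the clutching bundles on $S^2$ are certainly not); once it is in hand—by the Leray--Serre/invariance argument sketched—the remainder of the computation is essentially a bookkeeping exercise with the clutching construction.
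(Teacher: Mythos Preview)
Your proof is correct and takes a genuinely different route from the paper's. The paper exploits the fact that $\pi_1(\PU(p,q))\otimes\Q$ is one-dimensional to reduce everything to a single normalization, which it fixes by a geometric computation: it takes a Fuchsian representation $\pi_1(S)\to\PU(1,1)$ on a genus-$g$ surface, invokes Corollary~\ref{cor:unif} to get $\int_S\sch_1(\rho)=2g-2$, compares with the Euler class, and then pushes into $\PU(p,q)$ via the additivity of Proposition~\ref{additivity}. You instead compute $\phi$ on both generators directly by clutching over $S^2$, and the substantive work is your universal degree-$2$ identity $\pi^*\sch_1=\tfrac{2}{p+q}(qc_1^+-pc_1^-)$ on $B\U(p,q)$, which you obtain by identifying the image of $\pi^*$ as the twist-invariant line and normalizing on $B\SU(p,q)$.

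Your approach is more purely algebro-topological and self-contained: it avoids the appeal to Corollary~\ref{cor:unif} and to Proposition~\ref{additivity}, and your universal formula is in fact the degree-$2$ part of Proposition~\ref{projplat} freed from the projective-flatness hypothesis (which, as you note, fails for the clutching bundles). The paper's approach, by contrast, ties the normalization to the geometric picture of hyperbolic structures and Euler classes that drives the rest of the article. One small remark: your Leray--Serre justification that $\pi^*$ is injective on $H^2$ works because the fibre $B\U(1)$ has no odd cohomology, so $E_2^{0,1}=0$ and hence nothing hits $E_2^{2,0}$; it might be worth saying this explicitly.
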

This lemma tells that $\sch_1(\rho)$ can be computed by the following constructive procedure. Consider the central extension 
$$0\to \pi_1(\PU(p,q))\to \tilde{\PU}(p,q)\to \PU(p,q)\to 0.$$
The obstruction of lifting $\rho:\pi_1(X)\to \PU(p,q)$ to  $\tilde{\PU}(p,q)$ is a class $o(\rho)\in H^2(\pi_1(X),\pi_1(\PU(p,q))$ that we can map to $H^2(X,\pi_1(\PU(p,q)))$ (using a map $f:X\to B\pi_1(X)$ inducing the identity on fundamental groups). The lemma claims that one has 
$$\sch_1(\rho)=\phi_*(o(\rho)).$$
\begin{proof}
As $\pi_1(\PU(p,q))\otimes \Q$ has dimension 1, this is just a question of normalization. 
Consider first the case $p=q=1$. Then $\pi_1(\PU(1,1))=\Z$ and $\phi:\Z\to \Q$ is the standard inclusion. We have to take care of the orientation here: the loop $\gamma(\theta)=(e^{i\theta},1)\in \PU(1,1)$ corresponds to the positive generator. 
Recall that $\H^{1,1}=\{[v]\in \P V, h(v)>0\}$ is a hyperbolic disc, naturally oriented by its complex structure: we check that $\gamma(\theta)$ acts by rotation of angle $-\theta$ on $\H^{1,1}$: the two orientations disagree.  

We take a surface $S$ of genus $g$ with a $\H^{1,1}$-structure and holonomy representation $\rho:\pi_1(S)\to \PU(1,1)$. The obstruction of lifting it to the universal cover is the Euler class, which in this case is known to be equal to the Euler characteristic $2-2g$. By Corollary \ref{cor:unif}, we have $\int_S\sch_1(\rho)= 2g-2$. The change of sign observed in the previous paragraph makes this formula agree: we have in this case $\sch_1=-\rm{eu}$. 

In the general case, we consider a decomposition $V\oplus W$ where $V$ has signature $(1,1)$ and a representation $\rho:\pi_1(X)\to\SU(V)$, trivially extended to $\SU(V\oplus W)$. The additivity formula of Proposition \ref{additivity} gives $\sch_1(\rho_{V\oplus W})=\sch_1(\rho_V)$. It suffices to check that the following diagram commutes.
$$\xymatrix{ \pi_1 \SU(1,1) \ar[d]^{\phi_{1,1}} \ar[r]& \pi_1 \SU(p,q)\ar[d]^{\phi_{p,q}}\\
\Q\ar[r]^\id& \Q}$$

To check it, consider an element $(x,-x)\in \pi_1 \SU(1,1)$. We compute $\phi_{1,1}(x,-x)=2x$. When mapping $\SU(1,1)$ to $\SU(p,q)$ as above, the element $(x,-x)$ stays equal. This time we compute $\phi_{p,q}(x,-x)=\frac{2}{p+q}(qx-p(-x))=2x$. This proves the result.
\end{proof}

\section{Hermitian modular functors}
\subsection{Marked surfaces}

A marked surface is a triple $(S,\phi,L)$ where 
\begin{enumerate}
    \item $S$ is a compact oriented surface whose boundary is the disjoint union of the components $\partial_i S$ for $i\in  \pi_0(\partial S)$,
    \item $\phi$ is a collection of homeomorphisms preserving the orientation $\phi_i:S^1\to \partial_i S$ for $i\in \pi_0(\partial S)$. 
    \item $L$ is a split Lagrangian in $H_1(S,\Q)$. This means that $L=\bigoplus_{i\in \pi_0(S)} L_i$, where $L_i$ is a Lagrangian in $H_1(\dot{S}_i,\Q)$ and $\dot{S}_i$ is the $i$-th connected component of $S$ where each boundary curve has been collapsed to a point. 
\end{enumerate}
Frequently, we will denote only by $S$ the marked surface $(S,\phi,L)$. A morphism $(S,\phi,L)\to (S',\phi',L')$ is a pair $(f,s)$ where $f:S\to S'$ is a homeomorphism preserving the orientation and satisfying $\phi=\phi'\circ f$ and $s\in \Z$ is an integer. The composition of $(f,s):S_1\to S_2$ and $(g,t):S_2\to S_3$ is $$(g\circ f,s+t-\operatorname{Maslov}((gf)_*L_1,g_*L_2,L_3)).$$

We can define three operations on marked surfaces: the disjoint union, the change of orientation and the gluing operation. Only the third one deserves an explanation. Pick $\partial_+S$ and $\partial_-S$ two components of $\partial S$. We define $S_{\pm}$ to be the result of identifying 
$\phi_+(z)$ and $\phi_-(\overline{z})$ for any $z\in S^1$. 
Denote by $\dot{S}$ the surface obtained from $S$ by collapsing $\partial S_+$ and $\partial S_-$ to a point. There are natural maps $S\to \dot{S}\leftarrow S_{\pm}$. We define $L_{\pm}$ to be the preimage in $H_1(S_\pm,\Q)$ of the image of $L$ in $H_1(\dot{S},\Q)$.
The triple $(S_\pm,\phi,L_\pm)$ is the gluing of $S$ along $\partial_{\pm}S$. 

\subsection{Hermitian modular functor}\label{modular}

Let $\Lambda$ be a finite set endowed with an involution $\lambda\mapsto \lambda^*$ and a unit $0\in \Lambda$ satisfying $0^*=0$.
A $\Lambda$-coloring of a surface $S$ is a map $\lambda:\pi_0(\partial S)\to \Lambda$. A Hermitian modular functor is a functor $\mathcal V$ from the category of $\Lambda$-colored marked surfaces to the category of Hermitian vector spaces satisfying the following axioms.

{\bf MF1:} Monoidality (simplified). There are compatible isomorphisms
$$\mathcal{V}((S,\lambda)\amalg(S',\lambda'))\simeq \mathcal{V}(S,\lambda)\otimes \mathcal{V}(S',\lambda')$$

{\bf MF2:} Gluing: there is a natural isomorphism
$$\mathcal{V}(S_\pm,\lambda)\simeq\bigoplus_{\mu\in \Lambda}\mathcal{V}(S,\mu,\mu^*,\lambda)\otimes\mathcal{V}(S^2,\mu,\mu^*)$$

{\bf MF3:} Change of orientation. There is a natural perfect pairing
$$\mathcal{V}(S, \lambda)\times \mathcal{V}(- S,\lambda^*)\to \C$$

{\bf MF4:} Sphere with 1 point.
$$\dim \mathcal{V}(S^2,\lambda)=1\text{ if }\lambda=0, 0\text{ otherwise}.$$

{\bf MF5:} Sphere with 2 points.
$$\dim \mathcal{V}(S^2,\lambda,\mu)=1\text{ if }\lambda=\mu^*, 0\text{ otherwise}.$$

In these axioms, we make several shortcuts in the notation to keep it light. When we add $\lambda$ to a marked surface, it means either that we color by $\lambda$ part or all of the boundary components or even that we create a boundary component that we color with $\lambda$, depending on the context. 
Let us name some important constants associated to a Hermitian modular functor. 

\begin{enumerate}
\item For any (Hermitian) modular functor, any morphism of the form $(\id,1)$ acts on $\mathcal{V}(S,\lambda)$ multiplying by $c\in \C^*$. 
One can prove from the axioms that this number is independent on $S$ and $\lambda$ and is called the \emph{central charge} of the modular functor.

\item On $\mathcal{V}(S^2,\lambda,\lambda^*)$ acts the Dehn twist  $T_\delta$ along a simple curve separating $x$ and $y$. As this space is $1$-dimensional, $(T_\delta,0)$ acts multiplying by $r_\lambda=r_{\lambda^*}\in \C^*$. We will call these constants the \emph{multipliers} associated to the colors. 

\item On this latter space which is $1$-dimensional, the Hermitian form is definite. We denote by $\epsilon_\lambda=\epsilon_{\lambda^*}$ its sign. 
\end{enumerate}

It is known that $c$ and $r_\lambda$ are always roots of unity. 

\begin{Definition}
The level of a modular functor $\V$ is an integer $\ell$ such that $r_\lambda^\ell=1$ for all $\lambda\in \Lambda$.
\end{Definition}
Our main example is the Fibonacci TQFT for which we have $\ell=5$. We warn the reader that there is a shift with the level commonly used in Conformal Field Theory.

\subsection{The associated cohomological field theory}

We set $V=\Q[\Lambda]$. We first define for all $g,n\ge 0$, 

$$\omega_{g,n}\in \Hom(V^{\otimes n},H^*(\overline{\M}_{g,n},\Q)).$$


It reduces to define for every genus $g$ and for any $\lambda_1,\ldots,\lambda_n\in \Lambda$ a class $\omega_{g,n}(\lambda_1,\ldots,\lambda_n)\in H^*(\overline{\M}_{g,n},\Q)$.

\bigskip
Let $S$ be a surface with genus $g$ and $n$ boundary components. We recall that $\Mod(S)$ is the group of isotopy classes of homeomorphisms of $S$ fixing the boundary pointwise. Pick a marking $\phi$ of the boundary of $S$ and a coloring $\lambda:\pi_0(\partial S)\to \Lambda$. The automorphism group of $S$ in the category of marked surface is a central extension of $\Mod(S)$ (its class is given by $\lambda_1\in H^2(\Mod(S),\Z)$ but it does not matter here). As the modular functor is Hermitian and sends the central element to $c\id$, we get a representation
$$\rho_\lambda:\Mod(S)\to \PU \big(\mathcal{V}({\bf S},\lambda)\big)$$

Let $\delta$ be a simple curve parallel to a boundary component of $S$ colored by $\mu$. From the axioms, the Dehn twist $T_\delta$ acts by multiplication by $r_{\mu}$, hence trivially in the projective unitary group. This means that the representation $\rho_\lambda$ factors trough the group $\Mod(\dot{S},P)$ where $\dot{S}$ is the surface obtained by collapsing each boundary component of $S$ to a point, and $P$ is the set of resulting marked points. 

Finally, the axiom MF2 shows that every Dehn twist $T_\delta$ is diagonalizable with eigenvalues $r_\mu$ for $\mu\in \Lambda$. In particular $T_\delta^\ell$ acts trivially, hence $\rho_\lambda$ factors through a representation 
$$\rho_\lambda:\Mod^\ell(\dot{S},P)\to \PU \big(\mathcal{V}(S,\lambda)\big)$$
where $\Mod^\ell(\dot{S},P)$ is the quotient of the mapping class group $\Mod(\dot{S},P)$ by the (normal) subgroup generated by $\ell$-th powers of Dehn twists. 

As we showed in Section \ref{twistedmoduli} that $\pi_1(\Mb^\ell(\dot{S},P))=\Mod^\ell(\dot{S},P)$, the construction of Section \ref{toledo} defines a class
$$\omega_{g,n}(\lambda)=\sch(\rho_\lambda)\in H^*(\Mb^\ell(\dot{S},P),\Q)=H^*(\Mb_{g,n},\Q)$$
The last equality is due to the fact that the orbifolds $\Mb^\ell(\dot{S},P)$ and $\Mb_{g,n}$ have the same underlying topological space, hence the same rational cohomology.

\subsection{Proof of the CohFT axioms}

We define on $V$ the bilinear form $\eta(\lambda,\mu)=\sign\mathcal{V}(S^2,\lambda,\mu)$. By MF5, $\eta(\lambda,\mu)=\epsilon_\lambda$ if $\lambda= \mu^*$ and $0$ otherwise. We refer to \cite{Pandharipande} for details on the axioms of a CohFT, here we recall them at the same time that we prove them. The first one is a compatibility of the construction with the action of the symmetric groups permuting the colors and the marked point. It is satisfied by construction.

We recall that the product on $V$ is defined by the formula $\omega_{0,3}(\lambda,\mu,\nu)=\eta(\lambda\cdot \mu,\nu)$. It follows from the axioms of a Hermitian modular functor that the unit $1\in V$ corresponds to the color $0\in\Lambda$. We will try not to confuse the reader using both notations.

\subsubsection{Forgetting a point}
Let $\pi:\overline{\M}_{g,n+1}\to\overline{\M}_{g,n}$ the map which forgets the last marked point. One needs to check 
\begin{equation}\label{oubli}
\pi^* \omega_{g,n}(\lambda_1,\ldots,\lambda_n)=\omega_{g,n+1}(\lambda_1,\ldots,\lambda_n,0).
\end{equation}

Let $S$ be a marked surface and $S^\circ$ be the result of removing a disc in the interior of $S$. Corresponding to $\pi$, there is a morphism $p:\Mod(S^\circ)\to \Mod(S)$ obtained by gluing back the disc. 
This morphism $p$ induces a map $\Mod^\ell(S_{g,n+1})\to \Mod^\ell(S_{g,n})$ which is the morphism induced on the fundamental groups by the map $\pi:\Mb^\ell_{g,n+1}\to \Mb^\ell_{g,n}$. 

From the axioms of the modular functor, there is a $p$-equivariant isomorphism $\mathcal{V}( S^\circ,\lambda,0)=\mathcal{V}( S, \lambda)$ which fits in the following commutative diagram:
$$\xymatrix{
\pi_1 \Mb^\ell_{g,n+1}\ar[d]^{\pi_*}\ar[r]^\sim & \Mod^\ell(S_{g,n+1})\ar[r]^{\rho_{\lambda,0}}\ar[d]^\ell & \PU\big(\mathcal{V}( S,\lambda,0)\big)\ar[d]^\sim \\
\pi_1 \Mb^\ell_{g,n}\ar[r]^\sim &\Mod^\ell(S_{g,n})\ar[r]^{\rho_{\lambda}} & \PU\big(\mathcal{V}( S,\lambda)\big) 
}$$

Equation \eqref{oubli} hence follows from the naturality of the class $\sch$.

\subsubsection{Non-separating gluing}

Let $\pi:\overline{\M}_{g,n+2}\to \overline{\M}_{g+1,n}$ the map which glue the two last points. The second axiom of a CohFT to be checked is 

\begin{equation}\label{formuleadd}
\pi^*\omega_{g+1,n}(\lambda_1,\ldots,\lambda_n)=\sum_{\mu\in \Lambda}\epsilon_\mu\omega_{g,n+2}(\lambda_1,\ldots,\lambda_n,\mu,\mu^*).
\end{equation}

We consider this time a marked surface $S=(S,\phi,L)$ with two special boundary components $\partial_+S$ and $\partial_-S$. As in Section \ref{modular}, we denote by $S_{\pm}$ the result of gluing these components using their parametrization. 
Again, there is a natural morphism $p:\Mod(S)\to \Mod(S_{\pm})$ which induces a morphism $p:\Mod^\ell(S_{g,n+2})\to \Mod^\ell(S_{g+1,n})$. This morphism is the one induced by $\pi:\Mb^\ell_{g,n+2}\to \Mb^\ell_{g+1,n}$ on fundamental groups. We get hence a picture very similar to the preceding section. 

The main difference is that axiom MF2 gives a decomposition of $\mathcal{V}(S_{\pm},\lambda)$ which is preserved by the action of $\Mod(S_\pm,\delta)=p(\Mod(S_{g,n+2}))$ where $\delta$ is the common image of the glued boundaries. This decomposition corresponds to the eigenspace decomposition of the Dehn twist $T_\delta$. The situation is better visualized in the following diagram:

$$\xymatrix{
\pi_1 \Mb^p_{g,n+2}\ar[d]^{\pi_*}\ar[r]^\sim & \Mod^p(S_{g,n+2})\ar[r]_{\sum\rho_{\lambda,\mu,\mu^*}}\ar[d]^p & \PU\big(\mathcal{V}({\bf S},\lambda,\mu,\mu^*)_{\mu\in \Lambda}\big)\ar[d]^\sim \\
\pi_1 \Mb^p_{g+1,n}\ar[r]^\sim &\Mod^p(S_{g+1,n})\ar[r]^{\rho_{\lambda}} & \PU\big(\mathcal{V}({\bf S}_{\pm},\lambda)\big) 
}$$
The formula \eqref{formuleadd} follows then from the additivity formula of Proposition \ref{additivity}, taking into account that each factor $\mathcal{V}(S,\lambda,\mu,\mu^*)$ appears tensored by the one-dimensional space $\mathcal{V}(S^2,\mu,\mu^*)$ which has sign $\epsilon_\mu$. This factor acts on the class $\sch$ by the global sign $\epsilon_\mu$ (as a consequence of the multiplicativity property).  

\subsection{Separating gluing}\label{ss: separated gluing}
Let $\pi:\Mb_{g_1,n_1+1}\times \overline{\M}_{g_2,n_2+1}\to \overline{\M}_{g_1+g_2,n_1+n_2}$ the map obtained by gluing the last points. This time we must check that 
\begin{equation}\label{formuleprod}
\pi^* \omega_{g,n}(\lambda_1,\lambda_2)=\sum_{\mu\in \Lambda}\epsilon_\mu\omega_{g_1,n_1+1}(\lambda_1,\mu)\otimes \omega_{g_2,n_2+1}(\lambda_2,\mu^*).
\end{equation}

Here the tensor product makes sense using the Künneth formula. 

Again, consider two marked surfaces $S_1,S_2$ with respective genus $g_1,g_2$ and respectively $n_1+1$ and $n_2+1$ boundary components. The operation of gluing the last components produces a surface $S$ of genus $g=g_1+g_2$ and $n=n_1+n_2$ boundary components together with a map $p:\Mod(S_1)\times \Mod(S_2)\to \Mod(S)$ inducing a map $\Mod^\ell(S_{g_1,n_1+1})\times \Mod^\ell(S_{g_2,n_2+1})\to \Mod^\ell(S_{g,n})$. 
The situation is very similar to the one of the previous section: this time the group $\Mod^\ell(S_{g,n},\delta)$ which is the image of $p$ preserves the decomposition

$$\mathcal{V}(S,\lambda_1,\lambda_2)=\bigoplus_{\mu\in \Lambda} \mathcal{V}(S_1,\lambda_1,\mu)\otimes \mathcal{V}( S_2,\lambda_2,\mu^*)\otimes \mathcal{V}(S^2,\mu,\mu^*).$$

As in the previous section, and using this time the multiplicative property of $\sch$ given in Proposition \ref{multiplicativity}, we obtain a proof of Equation \eqref{formuleprod}, which ends the proof that $\omega_{g,n}$ satisfies the axioms of a CohFT. 

\section{Computation of the CohFT associated to the $\SU_2/\SO_3$-modular functors}\label{examples}

The purpose of this section is to give some detail on two interesting families for which the construction of the preceding section applies. The degree 0 part of those CohFTs (usually called Topological Field Theories or Frobenius algebras) are already interesting and new as they provide formulas for the signatures of TQFT as investigated in \cite{FP}.

\subsection{Semi-simplicity of the Frobenius algebras}

\subsubsection{Generalities on Frobenius algebras}
Let us start with generalities about Frobenius $\Q$-algebras. They are by definition finite dimensional $\Q$-algebras $V$ endowed with a linear form $\epsilon:V\to \Q$ such that the bilinear form $\eta(x,y)=\epsilon(xy)$ is non-degenerate. 

Consider its inverse $\eta^{-1}\in V\otimes V$ : composing  with the multiplication $m:V\otimes V\to V$, we get an element $\Omega=m(\eta^{-1})\in V$. It is well-known and easy to check that any TFT $\omega_{g,n}$ with underlying Frobenius algebra $V$ satisfies 
$$\omega_{g,n}(v_1,\ldots,v_n)=\epsilon(v_1\cdots v_n\Omega^g)$$

In particular, the signature of the Hermitian vector space associated to a genus $g$ surface by a modular functor of Frobenius algebra $V$ is $\omega_{g,0}=\epsilon(\Omega^g)$. This is a generalization of the Verlinde formula. 

A crucial property of a Frobenius algebra is its semi-simplicity, holding if and only if it is isomorphic to a product of number fields. Denote by $M_x\in \End(V)$ the operator of multiplication by $x\in V$ and by $\tr_V:V\to \Q$ the trace form given by $\tr_V(x)=\tr (M_x)$. A property equivalent to semi-simplicity is that the bilinear pairing $(x,y)\mapsto \tr_V(xy)$ is non-degenerate.

Hence a Frobenius structure on a semi-simple algebra is given by an invertible element $\alpha\in V^\times$ satisfying $\epsilon(x)=\tr_{V}(\alpha x)$. It looks like in the most interesting cases of $\SO_3$-modular functors of prime level, the algebra $V$ is a number field. 

\begin{Lemma}
If $V$ is a semi-simple Frobenius algebra associated to $\alpha\in V^\times$ then $\Omega=\alpha^{-1}$. In particular, $$\omega_{g,n}(v_1,\ldots,v_n)=\tr_{V}(v_1\cdots v_n\alpha^{1-g}).$$
\end{Lemma}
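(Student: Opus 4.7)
The plan is to establish first the following intermediate identity: for every $y\in V$,
\[ \eta(\Omega, y) = \tr_V(y). \]
Once this is in hand, substituting $\eta(\Omega,y)=\epsilon(\Omega y)=\tr_V(\alpha\Omega y)$ yields $\tr_V(\alpha\Omega y)=\tr_V(y)$ for all $y$, and semi-simplicity (which is exactly the non-degeneracy of the trace pairing $(x,y)\mapsto \tr_V(xy)$) forces $\alpha\Omega=1$, i.e. $\Omega=\alpha^{-1}$.

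To prove the identity, I would fix a basis $(e_i)$ of $V$ together with its $\eta$-dual basis $(e^i)$, characterized by $\eta(e_i,e^j)=\delta_{ij}$, so that $\eta^{-1}=\sum_i e_i\otimes e^i$ and $\Omega=\sum_i e_i e^i$. Recall that $V$ is commutative (being the Frobenius algebra of a 2d TQFT/CohFT), hence $\eta$ is symmetric and satisfies the Frobenius relation $\eta(xy,z)=\eta(x,yz)$. For any $y\in V$, the matrix coefficients of the multiplication operator $L_y:x\mapsto yx$ in the basis $(e_i)$ are
\[ a_{ji} = \eta(L_y(e_i),e^j) = \eta(ye_i,e^j) = \eta(e_i, y e^j), \]
so that
\[ \tr_V(y) = \sum_i a_{ii} = \sum_i \eta(e_i, y e^i). \]
On the other hand, using the Frobenius property and commutativity,
\[ \eta(\Omega, y) = \sum_i \eta(e_i e^i, y) = \sum_i \eta(e_i, e^i y) = \sum_i \eta(e_i, y e^i), \]
which matches $\tr_V(y)$, proving the identity.

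For the ``in particular'' statement, I would plug $\Omega=\alpha^{-1}$ into the TFT formula recalled just before the lemma,
\[ \omega_{g,n}(v_1,\ldots,v_n) = \epsilon(v_1\cdots v_n\,\Omega^g) = \tr_V\!\bigl(\alpha\, v_1\cdots v_n\,\alpha^{-g}\bigr) = \tr_V\!\bigl(v_1\cdots v_n\,\alpha^{1-g}\bigr), \]
where the last equality again uses commutativity of $V$.

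The only subtle step is the matrix-trace computation establishing $\eta(\Omega,y)=\tr_V(y)$; the other steps are formal once one has the correct manipulation of the dual basis, and the deduction $\alpha\Omega=1$ from the pointwise identity $\tr_V(\alpha\Omega y)=\tr_V(y)$ is a direct application of the semi-simplicity hypothesis.
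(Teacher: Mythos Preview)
Your proof is correct but takes a genuinely different route from the paper's. The paper reduces, via Artin--Wedderburn, to the case where $V$ is a number field, then extends scalars to $\C$ so that $V\otimes\C\simeq\C^n$ via the embeddings $\phi_i$; in the idempotent basis one reads off directly that $\epsilon$ on the $i$-th factor is multiplication by $\phi_i(\alpha)$, whence $\Omega$ has $i$-th coordinate $\phi_i(\alpha)^{-1}$, i.e.\ $\Omega=\alpha^{-1}$. Your argument instead proves the identity $\eta(\Omega,y)=\tr_V(y)$ by a direct dual-basis computation, valid in \emph{any} commutative Frobenius algebra, and invokes semi-simplicity only at the very last step to cancel and get $\alpha\Omega=1$. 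The gain of your approach is that it avoids both the structure theorem and extension of scalars, and isolates a clean general fact ($\eta(\Omega,\cdot)=\tr_V$) that does not require semi-simplicity; the paper's approach, by contrast, makes the idempotent picture explicit, which is what one needs anyway for the Verlinde-type formulas that follow.
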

\begin{proof}
By Artin-Wedderburn theorem, we can reduce to the case when $V$ is a number field. 
The computation of $\Omega$ can be done in $V\otimes \C$ which is isomorphic to $\C^n$ via the map $x\mapsto(\phi_1(x),\ldots,\phi_n(x))$ where $\phi_1,\ldots,\phi_n$ denote the embeddings $V\hookrightarrow \C$. The linear form $\epsilon$ on the $i$-th factor is the multiplication by $\phi_i(\alpha)$, hence the element $\Omega$ on the $i$-th factor is $\phi_i(\alpha^{-1})$, proving the lemma. 
\end{proof}

A nice example is given by the celebrated Verlinde formula which compute the  dimension of the modular functors. In the next sections, considering the $\SO(3)$-modular functor associated to a specific root of unity $\zeta$ of prime order, we will find a unitary modular functor whose CohFT reduces to its degree $0$ part. In that case, $V$ is the subfield of $\Q(\zeta)^+$ fixed by the involution $\zeta\mapsto \zeta^{-1}$ and $\alpha=-\frac{(\zeta-\zeta^{-1})^2}{\ell}$.

Using the formula $\tr_{V}(x)=\sum\limits_{i=1}^n\phi_i(x)$, we get the Verlinde formula:
$$\dim \mathcal{V}_\ell(S_g)=\Big(\frac{\ell}{4}\Big)^{g-1}\sum_{m=1}^{\frac{\ell-1}{2}}\sin\Big(\frac{2m\pi}{\ell}\Big)^{2-2g}.$$
We will get similar formulas for the signatures with the twist that the conjugates $\phi_i(\alpha)$ will no longer have an explicit expression.


\subsubsection{The $\SU_2$-modular functor}
We set $r\ge 2$ and choose $A$ to be a primitive $4r$-th root of unity. The construction in \cite{BHMV} produces a Hermitian modular functor $\V_A$ from this data.

The set of colors is $\Lambda=\{0,1,\ldots,r-2\}$ with the trivial involution. The multiplicators are $r_i=(-1)^i A^{i(i+2)}$ and the signs are $\epsilon_i=\sign((-1)^i [i+1])$ where $[n]=\frac{A^{2n}-A^{-2n}}{A^2-A^{-2}}$. The level of this theory in our sense is $\ell=4r$.

In the sequel, we denote by $V_A=\Q e_0\oplus\cdots\oplus\Q e_{r-2}$ the Frobenius algebra underlying the CohFT associated to $\V_A$. 

Recall that the bilinear form $\eta$ is diagonal in this basis and satisfies $\eta(e_i,e_i)=\epsilon_i$. As in any Frobenius algebra the product is given by 
$$\eta(e_ie_j,e_k)=\omega_{0,3}(e_i,e_j,e_k)=\sign \V_A(S^2,i,j,k).$$

The space $\V(S^2,i,j,k)$ is one dimensional if one can write $i=b+c,j=a+c,k=a+b$ for some integers $a,b,c\in \N$ and $0$ if we cannot.  In the first case, we find in \cite[Lemma 4.2]{BHMV} the formula $\omega_{0,3}(i,j,k)=\sign \langle i,j,k\rangle$ where 
$$\langle i,j,k\rangle=(-1)^{a+b+c}\frac{[a+b+c+1]![a]![b]![c]!}{[i]![j]![k]!}.$$
Here we used the quantum factorial $[n]!=[1][2]\cdots [n]$.
\begin{Proposition}[$\SU_2$ case]\label{semisimplepair}
For any $4r$-th root of unity $A$, $V_A$ is semi-simple. 
\end{Proposition}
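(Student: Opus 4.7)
The strategy is to prove semi-simplicity by exhibiting $V_A$ as a separable quotient of a polynomial ring in one variable.

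First, I would observe that $V_A$ is generated as a $\Q$-algebra by the single element $e_1$. The fusion rule $V_1\otimes V_i = V_{i-1}\oplus V_{i+1}$ for $\SU_2$ representations translates via $\omega_{0,3}$ into an expansion $e_1\cdot e_i = a_i\,e_{i-1}+b_i\,e_{i+1}$, where $a_i,b_i\in\{\pm 1\}$: the identity $\langle 1,i,i-1\rangle = (-1)^i[i+1]$ (and analogously for $\langle 1,i,i+1\rangle$) shows the coefficients are signs of non-vanishing quantum integers, since $[n]\neq 0$ for $1\le n\le r-1$ when $A$ is a primitive $4r$-th root of unity. Induction then gives $V_A\cong \Q[X]/(P)$ for a monic polynomial $P$ of degree $r-1$, reducing semi-simplicity to separability of $P$ over $\overline{\Q}$.

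Second, I would identify $P$ via the characteristic polynomial of the tridiagonal matrix $M$ of multiplication by $e_1$ in the basis $(e_i)$. Using the Frobenius symmetry $\eta(e_1e_i,e_{i-1})=\eta(e_1e_{i-1},e_i)$, one gets the compatibility $a_i\epsilon_{i-1}=b_{i-1}\epsilon_i$, from which $P$ satisfies the Chebyshev-type three-term recursion
$$P_n(X) = X\, P_{n-1}(X) - c_n\, P_{n-2}(X),\qquad P_0=1,\ P_1=X,$$
with $c_n = \epsilon_{n-1}\epsilon_n\in\{\pm 1\}$. An elementary induction shows $\gcd(P_n,P_{n-1})=1$ for all $n$, so $P_n$ and $P_{n-1}$ share no common root.

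Third (and this is the heart of the matter), I need to prove separability of $P=P_{r-1}$. The difficulty is genuine: for \emph{arbitrary} signs $c_n\in\{\pm 1\}$ the recursion can produce non-separable polynomials, as the pattern $c_1=-1,c_2=1$ yielding $P_3=X^3$ demonstrates. The argument must therefore exploit the specific sign pattern $\epsilon_i=\sign((-1)^i[i+1])$ determined by the arithmetic of the primitive $4r$-th root of unity $A$. My plan is to construct, in $V_A\otimes\C$, a complete set of $r-1$ orthogonal idempotents from the $S$-matrix of the modular tensor category underlying $\V_A$. In the unitary case this recovers the classical Verlinde/$\SU_2$-character diagonalization; in the non-unitary case an analogous formula should produce $r-1$ distinct characters $\chi_m: V_A\otimes\C\to\C$ giving a splitting $V_A\otimes\C\cong\C^{r-1}$, with distinctness ensured by the non-degeneracy of the $S$-matrix (axiomatic in a modular category).

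The main obstacle is making this last step rigorous: the $S$-matrix for non-unitary $A$ is typically obtained by Galois-twisting the classical one, but the multiplication law of $V_A$ is not literally the Galois twist of the Verlinde multiplication (since the signs $c_n$ depend on $A$ in a non-trivial way through the $\epsilon_i$). Bridging this gap—either by producing explicit idempotents or, equivalently, an explicit trigonometric factorization $P_{r-1}(X)=\prod_k(X-2\cos\theta_k)$ with pairwise distinct $\theta_k\in\C/\pi\Z$—is where the substantive content of the proof must lie.
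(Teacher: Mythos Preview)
Your first two steps---showing that $e_1$ generates $V_A$ via the fusion rule and reducing to the three-term recursion for the characteristic polynomial of the tridiagonal multiplication matrix $M_{e_1}$---are exactly the paper's argument. The paper writes $e_1 e_i = \frac{\epsilon_i}{\epsilon_{i-1}} e_{i-1} + e_{i+1}$, displays the resulting Jacobi-type matrix, and concludes $V_A \cong \Q[t]/(P)$.

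The divergence is at your third step. The paper does \emph{not} pursue an $S$-matrix diagonalization; it simply asserts that ``these kind of Jacobi matrices have a simple spectrum'' and leaves this as an exercise for the reader. You are more scrupulous: you correctly point out that for an \emph{arbitrary} sign pattern $c_n \in \{\pm 1\}$ the recursion $P_n = X P_{n-1} - c_n P_{n-2}$ can produce non-separable polynomials (your example $P_3 = X^3$ with $c_2 = -1, c_3 = 1$ is valid), so the argument must use something specific about the signs $\epsilon_i = \sign((-1)^i[i+1])$ coming from a primitive $4r$-th root of unity. The paper's text does not make explicit what that extra ingredient is.

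So the comparison is: same skeleton, but at the crux the paper invokes a one-line ``exercise'' about Jacobi matrices while you propose a different mechanism (explicit idempotents from the $S$-matrix / modular data). Your route, if completed, would yield more: an explicit diagonalization rather than a bare existence statement. The obstacle you flag---that the signature algebra $V_A$ is not literally a Galois twist of the Verlinde fusion algebra, so the classical $S$-matrix does not diagonalize it on the nose---is real, and neither your sketch nor the paper's text resolves it in print. Your caution here is warranted; the paper's ``exercise'' is doing more work than it advertises.
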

\begin{proof}
 We check from the above formulas that $e_1e_i=\frac{\epsilon_i}{\epsilon_{i-1}}e_{i-1}+e_{i+1}$ if we set $e_{-1}=e_{r-1}=0$. This proves that $e_1$ generates $V_A$ as an algebra and hence the natural surjection  $\Q[t]/P(t)\to V_A$ is an isomorphism where $P(t)=\det(M_{e_1}-t\id)$ and $M_{e_1}$ is the matrix of the multiplication by $e_1$ on $V_A$. This matrix has the simple form 
$$M_{e_1}=\begin{pmatrix} 0 & \epsilon_1/\epsilon_0 & 0 & 0& 0\\
1 & 0 & \epsilon_2/\epsilon_1 & 0 & 0 \\
\vdots &\vdots &\vdots &\vdots &\vdots &\\
0 & 0 & 1 & 0 & \epsilon_{r-2}/\epsilon_{r-3}\\
0 & 0 & 0 & 1 & 0
\end{pmatrix}.
$$
It is an exercise, left to the reader, that these kind of Jacobi matrices have a simple spectrum, which implies that the algebra $V_A$ is semi-simple. 
\end{proof}
When $A=\pm e^{\frac{i\pi}{2r}}$ we get $\epsilon_i=(-1)^i$ and $\omega_{0,3}(e_i,e_j,e_k)=(-1)^{(i+j+k)/2}$ when it is non zero. 
In this case the modular functor is Hermitian in the standard sense (the Hermitian form is definite) and the CohFT constructed above reduces to its degree 0 part. The Frobenius algebra we thus obtained is the Verlinde fusion algebra, described in many places, see \cite{Beauville,BHMV}.

It would be interesting to investigate the properties of these Frobenius algebras. Here, we directly skip to the $\SO_3$-case which gives lower dimensional and often simple Frobenius algebras. Moreover the corresponding representations of the mapping class group are irreducible, have good arithmetic properties if the level is prime, and contain the main example of this article, Fibonacci modular functor.

\subsubsection{The $\SO_3$-modular functor}\label{FrobSO3}

We choose $A$ to be a primitive $2\ell$-th root of unity where, this time, $\ell$ is odd. This corresponds in \cite{BHMV} to a modular functor with group $\SO_3$ and our main example concerns the case when $\ell=5$. 
In this case, the set of colors is $\Lambda=\{0,2,\ldots,\ell-3\}$, the involution is trivial and the multiplicators and the signs are given by the same formulas as above. Precisely, we have $\epsilon_{2i}=[2i+1]$ and $\mu_{2i}=A^{4i(i+1)}$ and we check that $\mu_{2i}^\ell=1$ for all $i$ so that $\ell$ is the level of this theory.  

Set $q=A^2$ and $V_q=\Q e_0\oplus\Q e_1\oplus\cdots\oplus \Q e_{\frac{\ell-3}{2}}$ where $e_i$ represents the color $2i$. This time, the Frobenius algebra depends only on $q$, hence the notation. We have $\omega_{0,3}(e_i,e_j,e_k)=\sign\langle 2i,2j,2k\rangle$ if $i,j,k$ satisfy 
\begin{equation*}\tag{T} i\le j+k,\,j\le i+k,\,k\le i+j\text{ and }i+j+k<l-1
\end{equation*}
and $0$ otherwise. The root giving a Hermitian theory is $q=e^{\pm 2i\pi\frac{\ell-1}{\ell}}$. We claim that Proposition \ref{semisimplepair} also holds when $\ell$ is odd. 

\begin{Proposition}[$\SO_3$ case]\label{semisimpleimpair}
For any $\ell$-th root of unity $q$, $V_q$ is semi-simple. 
\end{Proposition}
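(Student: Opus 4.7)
The plan is to follow the same two-step strategy as in Proposition \ref{semisimplepair}: first show that $e_1$ generates $V_q$ as a $\Q$-algebra, then show that the characteristic polynomial of multiplication by $e_1$ is separable.

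For the first step, I would use the explicit formula for $\langle i,j,k\rangle$ to derive the fusion rule
$$
e_1 \cdot e_i \;=\; \frac{\epsilon_i}{\epsilon_{i-1}}\, e_{i-1} \;+\; \beta_i\, e_i \;+\; e_{i+1},
$$
with the convention $e_{-1} = e_{(\ell-1)/2} = 0$. The coefficient of $e_{i+1}$ equals $1$ because $\langle 2, 2i, 2i+2\rangle = [2i+3]$ has sign exactly $\epsilon_{i+1}$; the coefficient of $e_{i-1}$ comes similarly from $\langle 2, 2i, 2i-2\rangle = [2i+1]$; and $\beta_i\in\Q$ is obtained from $\langle 2, 2i, 2i\rangle = -[2i+1][2i+2]/([2][2i])$. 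The matrix $M_{e_1}$ is then tridiagonal with nonzero super-diagonal (all entries equal to $1$), so the powers $1,e_1,e_1^2,\ldots,e_1^{(\ell-3)/2}$ form a basis of $V_q$, giving $V_q \simeq \Q[t]/P(t)$ where $P$ is the characteristic polynomial of $M_{e_1}$.

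For the second step, semi-simplicity of $V_q$ is equivalent to $P$ being separable. The tridiagonal structure yields the three-term recursion
$$
P_n(t) \;=\; (t - \beta_{n-1})\, P_{n-1}(t) \;-\; \frac{\epsilon_{n-1}}{\epsilon_{n-2}}\, P_{n-2}(t).
$$
Unlike the $\SU_2$ case, where $\beta_n = 0$ and the recursion is of pure Chebyshev type, here the diagonal terms $\beta_i$ are in general nonzero. The approach is to solve this recursion in closed form using an appropriate substitution in terms of $\ell$-th roots of unity, producing $(\ell-1)/2$ explicit common eigenvectors for all multiplication operators --- essentially a Verlinde-style diagonalization adapted to the signed structure constants --- and to check that the associated $(\ell-1)/2$ characters $V_q\otimes\C\to\C$ are pairwise distinct.

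The main obstacle is exactly this diagonalization: a general tridiagonal matrix with nonzero off-diagonals need not have simple spectrum (the symmetrization trick that suffices in Proposition \ref{semisimplepair} breaks down once the diagonal is nontrivial), so one cannot reduce the problem to the classical Jacobi-matrix lemma. Because the coefficients $\beta_i$ and $\epsilon_i/\epsilon_{i-1}$ are signs of explicit rational functions of quantum integers at the $\ell$-th root of unity $q$, I expect $P$ to factor over the cyclotomic field $\Q(\zeta_\ell)$ into simple linear factors indexed by a Verlinde-like set of characters; equivalently, one may show the Euler element $\Omega = \sum_i \epsilon_i e_i^2 \in V_q$ is invertible (since the trace form is given by $T(x,y)=\epsilon(xy\Omega)$), which reduces semi-simplicity to an explicit non-vanishing statement at the same level of quantum integer combinatorics.
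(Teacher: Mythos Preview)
Your first step --- computing the fusion rule and obtaining a tridiagonal $M_{e_1}$ with unit subdiagonal --- is exactly what the paper does; its formula is
\[
e_1 e_i \;=\; \sign\!\Big(\frac{[2i+1]}{[2i-1]}\Big)e_{i-1}\;-\;\sign\!\Big(\frac{[2i+2]}{[2i][2]}\Big)e_i\;+\;e_{i+1},
\]
so the superdiagonal entries are $\epsilon_i/\epsilon_{i-1}$ just as in the $\SU_2$ case, and only the diagonal is new. But the paper's second step is much shorter than yours: it simply notes that the off-diagonals are nonzero and writes ``the argument of the preceding proof repeats,'' invoking the very same Jacobi-matrix exercise as in Proposition~\ref{semisimplepair}. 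No Verlinde-type diagonalization, no analysis of $\Omega$.

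Your stated reason for abandoning this route --- that the nonzero diagonal breaks the Jacobi-matrix lemma --- is a misdiagnosis. The standard simple-spectrum argument for tridiagonal matrices (via the Sturm sequence of characteristic polynomials of leading principal minors and their interlacing) is completely insensitive to the diagonal; it uses only the products of opposite off-diagonal entries. Here those products are $\epsilon_i/\epsilon_{i-1}$, \emph{identical} to the $\SU_2$ situation, so whatever justifies the ``exercise'' in Proposition~\ref{semisimplepair} carries over verbatim. If one worries at all, the worry should be about the signs of these products (a general tridiagonal matrix with some $b_ic_i<0$ can have repeated eigenvalues), but that issue is already present in the $\SU_2$ case and is not special to $\SO_3$. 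Your proposed alternatives would likely work, but the paper neither needs nor uses them.
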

\begin{proof}
We compute this time that 
$$e_1 e_{i}=\sign\Big(\frac{[2i+1]}{[2i-1]}\Big)e_{i-1}-\sign\Big(\frac{[2i+2]}{[2i][2]}\Big)e_{i}+e_{i+1}.$$
This shows that the matrix of multiplication by $e_1$ is tridiagonal with non-zero entries. Hence the argument of the preceding proof repeats, showing that $V_q$ is semi-simple for any root $q$ of odd order. 
\end{proof}

We have no proof for the following properties that we checked numerically for $\ell<100$ prime and $q^\ell=1$.
\begin{enumerate}
    \item $V_q$ is a number field. 
    \item The fields associated to $q=\exp(2i\pi k/\ell)$ and $q'=\exp(2i\pi k'/\ell)$ are isomorphic if and only if $kk'=\pm p'$ where $p'=\frac{p+1}{4}$ if $p=-1[4]$ and $p'=\frac{p-1}{4}$ if $p=1[4]$. We say that $q$ and $q'$ are conjugate.
    \item The ring linearly generated by $e_0,\ldots,e_{(\ell-3)/2}$ is equal to the ring of integers of $V_q$ except possibly for one pair of conjugate $\ell$-th roots. 
\end{enumerate}
We will describe all Frobenius algebras of level $5$ and $7$ in Sections \ref{ss: R-matrix level 5} and \ref{ss:level7}. 


\subsection{Degree $2$ of a CohFT and the $R_1$-matrix}\label{calculR}

\subsubsection{Consequences of the Givental-Teleman theorem}
Suppose we have a semi-simple CohFT $\omega_{g,n}:V^{\otimes n}\to H^*(\Mb_{g,n},\Q)$. We denote by $\eta$ its non-degenerate bilinear form and by $1$ its unit. We recall the formula $\omega_{0,3}(v_1,v_2,v_3)=\eta(v_1v_2,v_3)$.


Denote by $\sigma$ and $\tau$ the degree $0$ and $2$ terms of $\omega$. This notation is suggested by our examples where they correspond respectively to the signature and the Toledo invariant of the Hermitian modular functor. 

The celebrated Givental-Teleman classification theorem says that the CohFT $\omega_{g,n}$ can be reconstructed from the degree $0$ part $\sigma$ and a $R$-matrix $R\in \End(V)[[z]]$ that we write $R(z)=\id + zR_1+o(z)$. In this article, we will use it only to express $\tau$ in terms of $\sigma$ and $R_1$ so that we recall only the parts of the theorem necessary for our purposes. We refer to \cite{Pandharipande} for the full statement.

The $R$-matrix satisfies the so-called symplectic condition $R(z)R^*(-z)=\id$ where $A^*$ is the adjoint of $A$ with respect to the bilinear form $\eta$. This condition implies in degree 1 that $R_1$ satisfies $R_1^*=R_1$ or matricially, $\eta^{-1}R_1^T=R_1\eta^{-1}$. We also set $T(z)=z(1-R(z)1)=-z^2R_1(1)+o(z^3)\in V[[z]]$. 
Givental-Teleman's theorem state that $R$ and $T$ act on CohFTs in such a way that one has $$\omega=RT\sigma.$$

Compute first $T\sigma$ at first order, denoting by $p_1=\Mb_{g,n+1}\to\Mb_{g,n}$ the forgetful map and setting $\kappa_1=(p_1)_*\psi_{n+1}^2$, we get from Definition 6 of \cite{Pandharipande}:

$$(T\sigma)_{g,n}(v_1,\ldots,v_n)|_{\deg=2}=
-\omega_{g,n+1}(v_1,\ldots,v_n,R_1(1))\kappa_1.$$

Then from the definition of $R\sigma$ (Equation (2) in \cite{Pandharipande}) we get, writing the symmetric form $R_1\eta^{-1}=\sum r_{\mu\nu}\mu\otimes\nu\in V\otimes V$:

\begin{eqnarray*}
(R\sigma)_{g,n}(v_1,\ldots,v_n)|_{\deg=2}=\sum_{i=1}^n \sigma_{g,n}(v_1,\ldots,R_1(v_i),\ldots,v_n)\psi_i\\
-\sum_{\mu,\nu}r_{\mu,\nu}\sigma_{g-1,n+2}(v_1,\ldots,v_n,\mu,\nu)\delta_{\irr}\\
-\sum_*\sum_{\mu,\nu}r_{\mu,\nu}\sigma_{g_1,n_1+1}(v_{i_1},\ldots,v_{i_{n_1}},\mu)\sigma_{g_2,n_2+1}(v_{j_1},\ldots,v_{j_{n_2}},\nu)\delta_{g_1,I}
\end{eqnarray*}
In this last formula, the sum $\Sigma_*$ is over decompositions $g=g_1+g_2$ and partitions $I\amalg J=\{1,\ldots,n\}$ where $I=\{i_1,\ldots,i_{n_1}\}$ and $J=\{j_1,\ldots,j_{n_2}\}$.

Using the Frobenius algebra structure, we recast this formula in the case when $(g,n)=(0,4)$ or $(1,1)$ in the following proposition.
\begin{Proposition}\label{propr1}
Let $\omega=\sigma+\tau+(\deg>2)$ be a semi-simple CohFT and $R_1$ be its $R$-matrix at first order. We have
\begin{align*}
\tau_{0,4}&(v_1,\ldots,v_4)=\sum_{i=1}^4\eta(R_1v_i,\prod_{j\ne i}v_j)\psi_i-\eta(v_1v_2v_3v_4,R_1(1))\kappa_1\\
&-\eta(R_1(v_1v_2),v_3v_4)\delta_{12}-\eta(R_1(v_1v_3),v_2v_4)\delta_{13}
-\eta(R_1(v_1v_4),v_2v_3)\delta_{14}.\\
\tau_{1,1}&(v)=\eta(\Omega,R_1(v))\psi_1-\eta(\Omega,v R_1(1))\kappa_1-\tr (R_1 M_v)\delta_\irr.
\end{align*}
In this formula, $\Omega\in V$ is the value of the punctured torus: it equals $\Omega=\sum_{i=1}^n v_i^2$ for any orthonormal basis $v_1,\ldots,v_n$ of $V\otimes \C$. 
\end{Proposition}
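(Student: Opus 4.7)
The plan is to specialize the two formulas displayed just before the proposition for $(T\sigma)_{g,n}|_{\deg=2}$ and $(R\sigma)_{g,n}|_{\deg=2}$ to $(g,n)\in\{(0,4),(1,1)\}$, and to translate each term via the Frobenius algebra structure. The Givental--Teleman factorization $\omega = RT\sigma$, together with the expansions $R(z) = \id + zR_1 + O(z^2)$ and $T(z) = -z^2 R_1(1) + O(z^3)$, shows that $\tau = \omega|_{\deg=2}$ decomposes as
\[\tau_{g,n} \;=\; (R\sigma)_{g,n}|_{\deg=2} \;+\; (T\sigma)_{g,n}|_{\deg=2},\]
with $R$ expanded only at first order. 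The key algebraic identities to have in hand are $\sigma_{0,3}(a,b,c) = \eta(ab,c)$, $\sigma_{g,n}(v_1,\ldots,v_n) = \eta(v_1\cdots v_n, \Omega^g\cdot 1)$, and the symplectic self-adjointness $\eta(R_1 x, y) = \eta(x, R_1 y)$.

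For $(0,4)$, the $(T\sigma)$-piece contributes only the $\kappa_1$ term $-\sigma_{0,5}(v_1,\ldots,v_4,R_1(1))\kappa_1 = -\eta(v_1v_2v_3v_4,R_1(1))\kappa_1$. The $(R\sigma)$-piece contributes the four $\psi_i$ terms, which give $\sum_i\eta(R_1 v_i,\prod_{j\neq i}v_j)\psi_i$, together with the three separating boundary divisors $\delta_{12},\delta_{13},\delta_{14}$ (in genus $0$ there is no $\delta_\irr$). The small lemma I would use at each partition is
\[\sum_{\mu,\nu}r_{\mu\nu}\,\eta(a,\mu)\,\eta(b,\nu) \;=\; \eta(R_1(a),b),\qquad a,b\in V,\]
which is immediate from the definition $R_1\eta^{-1} = \sum r_{\mu\nu}\,\mu\otimes\nu$ together with the fact that $\eta^{-1}$ identifies $\eta(a,-)\in V^*$ with $a\in V$. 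Applied with $(a,b) = (v_iv_j,v_kv_l)$ this yields the three displayed $\delta$-contributions.

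For $(1,1)$, stability eliminates every separating boundary (the genus-$0$ side would carry only two special points), leaving $\delta_\irr$ alone. The $\psi$- and $\kappa_1$-terms follow directly from $\sigma_{1,1}(v) = \eta(\Omega,v)$ and give $\eta(\Omega,R_1 v)\psi_1 - \eta(\Omega,vR_1(1))\kappa_1$. The only nontrivial task is to identify the $\delta_\irr$-coefficient $\sum_{\mu\nu}r_{\mu\nu}\,\sigma_{0,3}(v,\mu,\nu) = \sum_{\mu\nu}r_{\mu\nu}\,\eta(v\mu,\nu)$ with $\tr(R_1 M_v)$. Picking a basis $e_i$ of $V$ with $\eta$-dual $e^i$, so that $\eta^{-1} = \sum_i e_i\otimes e^i$, this sum becomes $\sum_i\eta(v\,R_1(e^i),e_i)$; a single use of $\eta(R_1 x,y)=\eta(x,R_1 y)$ rewrites it as $\sum_i\eta(R_1 M_v(e_i),e^i) = \tr(R_1 M_v)$. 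The formula $\Omega = \sum v_i^2$ for an orthonormal basis is the same identification applied directly to $\eta^{-1}$.

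The computation is essentially bookkeeping; there is no serious obstacle. The only conceptual point is the one-line lemma translating $R_1\eta^{-1}\in V\otimes V$ into the identities above, and the only thing that needs a bit of care is enumerating the correct stable boundary strata (verifying the absence of $\delta_\irr$ in $(0,4)$ and of any separating divisor in $(1,1)$) and tracking the signs coming from $T(z) = -z^2R_1(1)+O(z^3)$.
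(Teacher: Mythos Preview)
Your proposal is correct and follows exactly the approach indicated in the paper, which simply states that the proposition is obtained by ``using the Frobenius algebra structure'' to recast the general formulas for $(T\sigma)_{g,n}|_{\deg=2}$ and $(R\sigma)_{g,n}|_{\deg=2}$ in the cases $(g,n)=(0,4)$ and $(1,1)$. You have carefully supplied the bookkeeping the paper leaves implicit, including the identification $\sum_{\mu,\nu}r_{\mu\nu}\eta(a,\mu)\eta(b,\nu)=\eta(R_1 a,b)$, the trace computation for the $\delta_{\irr}$-coefficient, and the stability check eliminating the unwanted boundary strata.
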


We simplify further these formulas by observing that $H^2(\Mb_{0,4},\Q)$ and $H^2(\Mb_{1,1},\Q)$ are 1-dimensional. Hence, we can replace the classes with their integrals, using 
$$\int_{\Mb_{0,4}}\psi_i=\int_{\Mb_{0,4}}\kappa_1=\int_{\Mb_{0,4}}\delta_{ij}=1$$
and 
$$\int_{\Mb_{1,1}}\psi_1=\int_{\Mb_{1,1}}\kappa_1=\frac{1}{24}\text{ and }\int_{\Mb_{1,1}}\delta_{\irr}=\frac{1}{2}.$$

\subsubsection{A decomposition of the $R_1$-matrix}\label{decomposition}

Let $\S(V)=\{A\in \End(V), A^*=A\}$ be the space of rational endomorphisms of $V$, symmetric with respect to $\eta$. From the axioms of Frobenius algebras, the map $v\mapsto M_v$ embeds $V$ into $\S(V)$. 

We endow $\S(V)$ with the bilinear form $\langle A,B\rangle=\tr(AB)$: by semi-simplicity, its restriction to $V$ is non-degenerate, hence we have a decomposition $\S(V)=V\oplus V^\perp$ which allows to decompose any $R_1$-matrix in the form $$R_1=M_{r_1}+R_1',\quad r_1\in V,\quad R_1'\in V^\perp.$$

Plugging this decomposition into the formula of Proposition \ref{propr1}, we observe that the contribution of $r_1$ in $\tau_{0,4}$ cancels: knowing $\tau_{0,4}$ is equivalent to knowing $R_1'$. A standard way to do so is to decompose the matrix into the idempotent basis but it seems to be more efficient to use a fixed element $w$, that we will call the \emph{pivot}, and try to extract $R_1'$ from the endomorphism $A_w$ defined for all $u,v\in V$ by $$\tau_{0,4}(w,w,u,v)=\eta(A_w(u),v).$$

A computation using Proposition \ref{propr1} gives 
\begin{eqnarray}\label{double-commutateur}
\notag A_w(v)&=&2R'_1(w)wv+R'_1(w^2)v+w^2R'_1(v)\\
\notag &&-R'_1(1)w^2v-R'_1(w^2)v-2wR'_1(wv)\\
&=&[[R'_1,M_w],M_w](v)-[[R'_1,M_w],M_w](1)v
\end{eqnarray}
This shows that if $M_w$ is semi-simple, we can indeed extract $R_1'$ from $A_w$.

If we decompose $R_1$ in the formula expressing $\tau_{1,1}(v)$ we get from the equality $\tr(R_1M_v)=\tr(M_{r_1}M_v)=\tr_V(r_1v)$ the expression:
\begin{eqnarray*}
\tau_{1,1}(v)&=&\frac{1}{24}\tr_V(R_1(v)-R_1(1)v)-\frac{1}{2}\tr_V(r_1 v)\\
&=&\frac{1}{24}\tr_V(R_1'(v)-R_1'(1)v)-\frac{1}{2}\tr_V(r_1v).
\end{eqnarray*}
This last equation shows how to compute $r_1$ from $R_1'$ and $\tau_{1,1}$. 



\subsubsection{The computation of $R_1'$ for $\SO_3$-modular functors}

Let $q$ be a primitive root of unity of order $\ell=2r+1$ and $\V_q$ be the associated modular functor. We recall that its Frobenius algebra has basis $1=e_0,e_1,\ldots,e_{r-1}=w$. 
From the formulas of Section \ref{FrobSO3}, the pivot $w$ acts by
$$we_i= \sign\Big(\frac{[2i+2]}{[2]}\Big)e_{r-1-i} -\sign\Big(\frac{[2i+1]}{[2]}\Big) e_{r-i}$$
from which it follows that $e_1=w^2+1$. As $e_1$ has a simple spectrum, the same is true for $w$ and the strategy of the preceding section works for $w$. 

\begin{Remark}
Specialists in TQFT may notice that $w$ corresponds to the color $1$ in the basis of ``small colors", see \cite{BHMV}. It is then quite expected that it plays a prominent role. 
\end{Remark}
We can compute the dimension of the vector space $\V_q(S^2,2i,2j,2r-2,2r-2)$ by applying the axiom MF2 along a curve $\gamma$ which separates the colors $2i,2j$ from the colors $2r-2,2r-2$. Due to the constraints $(T)$, the color $2k$ of $\gamma$ can take only the values $0,2$, and cannot take the value $2$ if $i\ne j$. This gives 
$$\tau_{0,4}(w,w,e_{i},e_{j})=0\text{ if }i\ne j.$$

If $i=j$, denote by $f_0,f_1$ the basis of $\V_q(S^2,2i,2j,2r-2,2r-2)$ obtained by assigning the colors $0,2$ to $\gamma$. We compute: 
\begin{enumerate}
\item $||f_0||^2=[2r-1][2i+1]=-[2][2i+1]$
\item $||f_1||^2=[3]^{-1}\langle 2r-2,2r-2,2\rangle\langle 2i,2i,2\rangle=-\frac{[2i+2][2i+1]}{[2i][2][3]^2}$
\item $T_\gamma f_0=f_0,\quad  T_\gamma f_1=q^4 f_1$
\end{enumerate}
Let $\delta$ be a curve separating the colors $2r-2,2i$ from $2r-2,2i$. This time, the possible colors of $\delta$ in the decomposition are $2r-2-2i$ and $2r-2i$. Denoting by $g_0,g_1$ the corresponding vectors, we get 
\begin{enumerate}
\item $||g_0||^2=\langle 2r-2,2i,2r-2-2i\rangle^2[2r-2i-1]^{-1}\overset{\sign}{=}-[2i+2]$
\item $||g_1||^2=\langle 2r-2,2i,2r-2i\rangle^2[2r-2i+1]^{-1}\overset{\sign}{=}-[2i]$
\item $T_\delta g_0=q^{2(r-i-1)(r-i)}g_0,\quad  T_\delta g_1=q^{2(r-i)(r-i+1)} g_1$
\end{enumerate}
These formulas show that $\tau_{0,4}(w,w,e_i,e_i)=0$ if $[2i][2i+2]>0$ because the Hermitian form is definite. 

\begin{Lemma}\label{toledo-triangle}
Let $A,B,C\in \PU(1,1)$ be three elements satisfying for some $a,b,c>1$
$$A^a=B^b=C^c=ABC=1$$ and denote by $\theta_A,\theta_B,\theta_C\in (-\pi,\pi)$ the angles of $A,B,C$ acting on $\H^{1,1}$. Then, the Toledo invariant associated to this representation of the fundamental group of a sphere with three singular points of order $a,b,c$ is 
$$\tau=\epsilon-\frac{\theta_A+\theta_B+\theta_C}{2\pi},\quad \epsilon=\sign(\theta_A)=\sign(\theta_B)=\sign(\theta_C)$$
\end{Lemma}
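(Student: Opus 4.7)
The plan is to represent the Toledo class geometrically as $\tau = \int_X f^*\omega_1$ for a smooth $\rho$-equivariant map $f : \tilde X \to \H^{1,1}$, with $\tilde X$ the universal cover of the triangle orbifold $X = S^2(a,b,c)$. Let $p_A, p_B, p_C \in \H^{1,1}$ be the fixed points of the elliptic elements $A, B, C$ (generically distinct), and let $T$ denote the geodesic triangle they span. Writing $A = \sigma_{AC}\sigma_{AB}$, $B = \sigma_{BA}\sigma_{BC}$, $C = \sigma_{CB}\sigma_{CA}$ as products of reflections across the sides of $T$, the relation $ABC=1$ becomes automatic by telescoping, and the rotation angle of each of $A, B, C$ is forced to equal twice the oriented interior angle of $T$ at the corresponding vertex. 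This simultaneously identifies $|\theta_A|/2, |\theta_B|/2, |\theta_C|/2$ with the vertex angles of $T$ and extracts the common sign $\epsilon = \sign(\theta_A) = \sign(\theta_B) = \sign(\theta_C)$ as the orientation with which $T$ is traversed in $\H^{1,1}$.

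A fundamental domain for $\Gamma = \pi_1^{\mathrm{orb}}(X)$ on $\tilde X$ can be taken as a pair of adjacent geodesic triangles $T_0 \cup T_0'$ with interior angles $\pi/a, \pi/b, \pi/c$. I define $f$ so that its restriction to each of $T_0, T_0'$ is a diffeomorphism onto $T$ (respectively its image by an appropriate element of $\rho(\Gamma)$) matching the cone vertices with the corresponding fixed points of $A, B, C$, and extend equivariantly to $\tilde X$. A direct local computation of the form introduced earlier identifies $\omega_1$ on $\H^{1,1}$ with $\frac{1}{2\pi}$ times the hyperbolic area form of curvature $-1$; this normalization is crosschecked against the uniformization case via Corollary \ref{cor:unif}. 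Then by Gauss--Bonnet one has $\operatorname{Area}(T) = \pi - \tfrac{1}{2}(|\theta_A|+|\theta_B|+|\theta_C|)$, and
\[ \int_X f^*\omega_1 \;=\; 2\epsilon \cdot \frac{\operatorname{Area}(T)}{2\pi} \;=\; \epsilon - \frac{\theta_A+\theta_B+\theta_C}{2\pi}, \]
using $\epsilon\,|\theta_X| = \theta_X$ in the final step.

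The main obstacle will be the bookkeeping of orientations: checking that $f$ is consistently defined across the common edge of $T_0$ and $T_0'$, and that the two contributions add rather than cancel, yielding $\epsilon$ times the positive hyperbolic area. The degenerate or boundary cases (coincident or collinear fixed points, a vanishing rotation angle $\theta_X = 0$, or the Euclidean/spherical source when $1/a + 1/b + 1/c \ge 1$) should be treated by continuity from the generic hyperbolic case.
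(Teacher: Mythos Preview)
Your proposal is correct and follows essentially the same approach as the paper: identify the fixed points of $A,B,C$ as the vertices of a hyperbolic triangle with interior angles $|\theta_A|/2,|\theta_B|/2,|\theta_C|/2$, observe that the Toledo invariant equals $2\epsilon$ times the area of this triangle divided by $2\pi$, and conclude via Gauss--Bonnet. The paper's proof is a three-line sketch of exactly this; you have simply made explicit the equivariant map, the reflection decomposition, and the normalization check against Corollary~\ref{cor:unif}.
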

\begin{proof}
We observe that the centers of $A,B,C$ in $\H^{1,1}$ form a triangle with angles $\frac{1}{2}\theta_A,\frac{1}{2}\theta_B,\frac{1}{2}\theta_C$. Hence these angles have the same sign and their sum satisfy $|\theta_A+\theta_B+\theta_C|\le 2\pi$. The result follows from the Gauss-Bonnet formula and the identification of the Toledo invariant with twice the area of the triangle divided by $2\pi$. 
\end{proof}
We observe that if a matrix $A$ is diagonal in an orthogonal basis $e_0,e_1$, such that $Ae_0=q_0 e_0, Ae_1=q_1e_1, \sign ||e_0||^2=\epsilon_0,\sign ||e_1||^2=\epsilon_1$, we have $$e^{i\theta_A}=\big(\frac{q_1}{q_0}\big)^{\epsilon_0}=\big(\frac{q_0}{q_1}\big)^{\epsilon_1}.$$

This gives in the case when $[2i][2i+2]<0$:
$$e^{i\theta_\alpha}=e^{i\theta_\beta}=q^{4(r-i)\sign([2i])}, e^{i\theta_\gamma}=q^{-4\sign([2][2i+1])}.$$

To sum up, the explicit formulas we have just written can be plugged into Lemma \ref{toledo-triangle} to obtain the Toledo invariants $\tau_{0,4}(w,w,e_i,e_i)$. In particular, they belong to $(-1,1)\cap \frac{1}{\ell}\Z$. 

This gives an explicit formula for the diagonal matrix $A_w$. Inverting Equation \eqref{double-commutateur} gives back $R_1'$. We observe that this equation is easily solved in an idempotent basis $v_1,\ldots,v_r\in V\otimes\C$. Let $\lambda_1,\ldots,\lambda_r\in \C$ be defined by $wv_i=\lambda_i v_i$. In this basis, $R_1'$ has vanishing diagonal: if $r_{ij}$ are the entries of $R_1'$, then the entries of $[[R_1',M_w],M_w]$ are $(\lambda_i-\lambda_j)^2r_{ij}$. It follows that the maximal denominator of $R_1'$ is $\ell \prod_{i\ne j}(\lambda_i-\lambda_j)^2=\ell \Delta_w^2$ where $\Delta_w$ is the discriminant of the minimal polynomial of $w$. This discriminant divides the discriminant $\Delta_V$ of $V$, provided that it is a number field.

\subsubsection{The computation of $r_1$ for $\SO_3$-modular functors}

As explained in the end of Section \ref{decomposition}, one can recover $r_1$ from the data of $R_1'$ and $\tau_{1,1}(e_i)$. Unfortunately, these Toledo invariants are harder to compute for at least two reasons: first the axioms of modular functors are not sufficient to compute it: we need an explicit formula for the image of $T_\gamma$ and $T_\delta$ where $\gamma,\delta$ are two simple curves on a punctured torus intersecting once. Secondly, the dimension of the representation $V_q(S,e_i)$ where $S$ is a punctured torus might be large: it is equal to $r-i$. Although $\Mod^\ell_{1,1}$ is again a triangle group (up to the elliptic involution), there is no simple formula for $\tau_{1,1}(e_i)$ as in Lemma \ref{toledo-triangle}. We need to adapt a formula due to Meyer (see Appendix \ref{Meyer}) to provide an effectively computable formula that we give now.

Suppose that we have already explicit formulas for $T_\gamma,T_\delta\in \U(p,q)$ where $\U(p,q)$ is the unitary group of $\V_q(S,e_i)$ where $S$ is a punctured torus. The following formulas hold in $\PU(p,q)$, yielding a representation of the triangle group $\Delta(2,3,\ell)$: $$T_\gamma^\ell=T_\delta^\ell=(T_\gamma T_\delta)^3=(T_\gamma T_\delta T_\gamma)^2=1.$$
In the Meyer formula of Appendix \ref{Meyer}, we obtain by putting $A=T_\gamma,B=T_\delta T_\gamma, C=(T_\gamma T_\delta T_\gamma)^{-1}$:

\begin{multline*}
\tau_{1,1}(e_i)=\frac{1}{2}\operatorname{Sign}\left[\frac{1}{i}(1-(T_\gamma T_\delta)^{-1})(1-T_\gamma)^{-1}(1-T_\gamma T_\delta T_\gamma)\right]\\
+\frac{1}{2}G(T_\gamma)+\frac{1}{2}G(T_\gamma T_\delta)-\frac{1}{2}G(T_\gamma T_\delta T_\gamma).
\end{multline*}
In this formula, $G(T)$ is a signed sum of arguments of the eigenvalues of $T$ for which we refer to the appendix. We also observe that this formula makes sense only if $T_\gamma$ has no fixed vectors: this will be the case as soon as $i>0$, a harmless assumption since the Toledo invariant vanishes when $i=0$ as the Hermitian structure is then unitary.

It remains to provide an explicit description of $T_\gamma$ and $T_\delta$. For that we will use the curve operators $C_\gamma$: this is a Hermitian operator associated to any simple curve $\gamma$ satisfying Kauffman rules. We refer to \cite{BHMV} or \cite{Bordeaux} for more detail. We will need only two properties for $C_\gamma$: the first one is that it has the same diagonalization basis as $T_\gamma$. 

Applying the axiom MF2 along $\gamma$ yields a decomposition of $\V$ indexed by $2i\in \Lambda$. The eigenvalue of $T_\gamma$ on this subspace is $r_{2i}=q^{2i(i+1)}$ and the eigenvalue of $C_\gamma$ is $c_{2i}=q^{4i+2}+1+q^{4i+2}$. 
We observe that the spectrum of $C_\gamma$ is simple so that for any $2\ell$-root of unity, there exists a polynomial $Q\in \Q(q)[X]$ such that $Q(c_{2i})=r_{2i}$ for all $i\in \{0,\ldots,r-1\}$. 
Hence $T_\gamma$ can be computed from $C_\gamma$ by the formula $T_\gamma=Q(C_\gamma)$.

Consider now a punctured torus $S$ represented in Figure \ref{torep}. Applying the axiom MF2 along $\gamma$ decomposes $\V(S,e_{i})$ into 1-dimensional spaces. Denote by $\psi_j$ the basis vector correponding to the color $2j$. The conditions $(T)$ yield $i\le 2j<2r-i$ giving $\dim \V(S,e_i)=r-i$.

\begin{figure}[htbp]
\centering
\includegraphics[width=3cm]{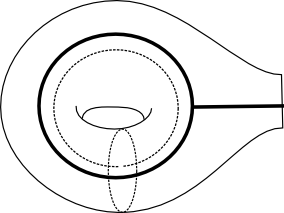}
\caption{A basis for the punctured torus}\label{torep}
\end{figure}

\begin{Proposition}
Setting $u_j=\frac{[i+j+1][j-i]}{[j][j+1]}$, the curve operator $C_\delta$ satisfies 
$$C_\delta \psi_j= \psi_{j+1}+(u_{2j+1}+u_{2j}-1)\psi_j+u_{2j}u_{2j-1}\psi_{j-1}.$$
\end{Proposition}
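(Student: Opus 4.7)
The plan is to prove the formula for $C_\delta$ by a direct skein-theoretic computation, using the standard recoupling (fusion) identities of $\SO_3$-type Kauffman bracket theory. Recall that $\V_q(S,e_i)$ is modelled on the Kauffman bracket skein module of a handlebody $H$ with $\partial H = S$ containing $\gamma$ as the spine of its unique $1$-handle; the basis vector $\psi_j$ is realized by the ``theta-graph'' inside $H$ consisting of the spine colored by $2j$, joined to the puncture by an edge colored by $e_i=2i$, with the admissibility constraint $i\leq 2j\leq 2r-i-2$.

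First I would represent $C_\delta$ in this skein picture. Since $\delta$ meets $\gamma$ transversely in exactly one point on $S$, pushing $\delta$ slightly into $H$ turns it into a small unknotted loop linking the $2j$-colored strand of $\psi_j$ once. The Kauffman bracket of this loop equals the $\SO_3$ color~$2$ Jones--Wenzl loop. The fundamental recoupling identity for a color~$2$ loop linking a strand of color $2j$ inserts on that strand a sum of the three admissible colors $2j-2,\,2j,\,2j+2$, each weighted by the corresponding tetrahedral / theta-normalization coefficient. Applied inside $\psi_j$, this immediately yields an expression
\[C_\delta\psi_j=\alpha_j\,\psi_{j+1}+\beta_j\,\psi_j+\gamma_j\,\psi_{j-1},\]
with coefficients $\alpha_j,\beta_j,\gamma_j$ expressible as quantum $6j$-symbols with arguments depending only on $i,j$ and the color $2$.

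The second step is the explicit evaluation of the three $6j$-symbols. Using the closed formulas for tetrahedra of the form $\operatorname{Tet}\!\left[\begin{smallmatrix}2j&2j'&2\\2i&2j&2j\end{smallmatrix}\right]$ divided by the theta coefficients $\theta(2j,2j,2i)$ (equivalently the recoupling formula of Masbaum--Vogel or Kauffman--Lins specialized to $\SO_3$), one checks that the coefficients organize themselves in terms of the single rational function
\[u_k=\frac{[i+k+1][k-i]}{[k][k+1]},\]
which is the square root of a ratio of theta coefficients. The normalization of the basis $\psi_j$ (in which $C_\gamma$ is diagonal with eigenvalues $c_{2j}$) fixes $\alpha_j=1$; a short direct computation of the relevant tetrahedra then gives $\gamma_j=u_{2j}u_{2j-1}$; finally the diagonal coefficient is obtained either from the analogous $6j$-computation or by noting that $C_\delta$ preserves the natural Hermitian form and that its trace along $\psi_j$ must reproduce Kauffman's formula $c_2=q^{2}+1+q^{-2}$ on the contribution of each local channel, forcing $\beta_j=u_{2j+1}+u_{2j}-1$.

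The main obstacle is purely bookkeeping: tracking normalizations of theta graphs and tetrahedra in the $\SO_3$ conventions, so that the three prefactors consolidate into the single elementary expression $u_k$ and so that the admissibility endpoints $j=i/2$ and $j=r-1-i/2$ give $\gamma_{i/2}=0$ and $\alpha_{r-1-i/2}=0$ respectively, matching the vanishing of $\psi_{i/2-1}$ and $\psi_{r-i/2}$. Once the tetrahedral formula is expanded in terms of quantum integers, the identity $[i+k+1][k-i]/([k][k+1])$ appears as the natural common factor, and the stated formula follows.
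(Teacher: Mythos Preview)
The paper does not actually prove this proposition; it is stated as a computational input, with the curve-operator formalism deferred to \cite{BHMV} and \cite{Bordeaux}. Your skein-theoretic approach via recoupling and $6j$-symbols is the standard route and is presumably what the authors had in mind.

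There is, however, a geometric error in your setup that would derail the computation if taken literally. You describe $\delta$, once pushed into the handlebody $H$, as ``a small unknotted loop linking the $2j$-colored strand of $\psi_j$ once,'' and you claim that a color-$2$ loop linking a $2j$-strand produces a sum over colors $2j-2,2j,2j+2$. Both assertions are wrong. Since $\gamma$ bounds a disk in $H$ (this is precisely why $C_\gamma$ is diagonal in the basis $\psi_j$) and $\delta$ meets $\gamma$ once, the curve $\delta$ is isotopic in $H$ to the \emph{core} of the solid torus: it runs \emph{parallel} to the $2j$-colored loop of $\psi_j$, with the $2i$-colored edge passing between them. A color-$2$ meridian encircling a $2j$-strand yields only the scalar $c_{2j}$---that is exactly $C_\gamma$, not $C_\delta$. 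The tridiagonal action of $C_\delta$ arises instead from \emph{fusing} the parallel color-$2$ loop into the $2j$-core; the trivalent vertex $(2j,2j,2i)$ sitting on that core is what brings in the $6j$-symbols and produces the coefficients $u_k$. With this correction your second step (the tetrahedral evaluation) does go through.

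A minor additional point: your determination of the diagonal coefficient $\beta_j$ by a ``trace'' argument is not convincing as written---the trace of $C_\delta$ is $\sum_j\beta_j$, which does not determine each $\beta_j$ separately. It is cleaner to extract $\beta_j$ from the same $6j$-computation as the off-diagonal terms.
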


These complicated formulas yield an explicit algorithm for computing the $R$-matrix which we implemented in Sage. We will give explicit examples in the next section, but we observe (and can indeed prove) that the denominators of any entry of the R-matrix divide $2\cdot 3\cdot \ell \cdot\Delta_V^2$.

\subsection{The example of $q=\exp(2i\pi/5)$}\label{ss: R-matrix level 5} 

In this case, the colors $0,2$ correspond to elements $e_0,e_1$. The element $e_0$ is the unit and $e_1$ satisfies $e_1^2=-e_1-1$. Noting $e_1=t$, this gives $V=\Q[t]/(t^2+t+1)$. One has $\epsilon(1)=1,\epsilon(t)=0$ hence $\eta(1,1)=1$ and $\eta(t,t)=-1$. A simple computation gives $\alpha=\frac{1-t}{3}$ and $\Delta_V=-3$.




As explained in Section \ref{calculR}, to compute the matrix $R_1$, it is sufficient to compute $\tau_{1,1}$ and $\tau_{0,4}$. The first term vanishes because the modular functor is 1-dimensional in that case. It remains to consider the case of $\tau_{0,4}$. The only non trivial term is $\tau_{0,4}=\tau_{0,4}(t,t,t,t)$ which can be computed by Lemma \ref{toledo-triangle}. We find that $\rho_{0,4}:\Mod_{0,4}^5=\Delta(5,5,5)\to \PU(1,1)$ has Toledo invariant $-\frac{2}{5}$. Indeed, each generator acts by a rotation of angle $-\frac{2\pi}{5}$. We recognize here the uniformization of the orbifold $\Mb_{0,4}^5$, justifying the equality

$$\tau_{0,4}=\chi(\Mb_{0,4}^5)=-\frac{2}{5}.$$

Let us compute now the matrix $R_1$. It satisfies $R_1^*=R_1$ and from the fact that $\tau_{1,1}(e_0)=0$ we get from Proposition \ref{propr1} that $\tr R_1=0$. 
Hence we may write $R_1=\begin{pmatrix} a & -b \\ b & -a\end{pmatrix}$. 

Applying Proposition \ref{propr1} we get 
$$ -\frac{2}{5}=\tau_{0,4}=4\eta(R_1(t),t^3)-\eta(t^4,R_1(1))-3\eta(R_1(t^2),t^2)=-6a+3b.$$
Applying it again to compute $\tau_{1,1}(t)=0$ we obtain 
$$0=\frac{1}{24}\eta(\Omega,R_1(t)-tR_1(1))-\frac{1}{2}\tr(R_1 M_t)$$
which yields after computation $10a=23b$. We get finally
$$R_1=\frac{1}{270}\begin{pmatrix}23 & -10 \\ 10 & -23 \end{pmatrix}.$$

For further use, we write explicitly the Toledo invariants for $q=\exp(\frac{2i\pi}{5})$ as follows
$$\tau_{g,n}=a \tilde{\kappa}_1+b\sum_{i=1}^n \psi_i+c\delta_{\irr}+\sum_{g=g_1+g_2,n=n_1+n_2} d_{g_1,n_1}\delta_{g_1,n_1}$$
where

$$ a= - \frac{23} {270} \sigma_{g,n}-\frac{1}{27} \sigma_{g,n+1},\quad   b= - \frac{2}{15} \sigma_{g,n}, \quad c  =\frac{1}{90} \sigma_{g-1,n+1} \,\,(g\ge 1)$$
\begin{multline*}
d_{g_1,n_1}=  - \frac{23} {270} (\sigma_{g_1, n_1} \sigma_{g_2, n_2}+\sigma_{g_1, n_1+1} \sigma_{g_2, n_2+1}  )-\frac{1}{27} (\sigma_{g_1, n_1+1} \sigma_{g_2, n_2}+\\
\sigma_{g_1,n_1} \sigma_{g_2, n_2+1})
\end{multline*}

\subsection{The examples of level 7}\label{ss:level7}

\subsubsection{The Frobenius algebras}

{\bf Case 1: $q_1=\exp(2i\pi/7)$}

Denote by $e_0,e_1,e_2$ the standard basis of $V$ corresponding to the colors $0,1,2$ as explained in Section \ref{FrobSO3}. We find that $e_0$ is the unit and one has $e_1^2=e_1+e_2+e_0, e_1e_2=-e_1-e_2$. Noting $e_1=t$, this gives $e_2=t^2-t-1$ and $V=\Q[t]/(t^3-t-1)$ whose discriminant is $\Delta_V=-23$.

As $\epsilon_0=\epsilon_1=-\epsilon_2=1$, the co-unit $\epsilon:V\to \Q$ satisfies $\epsilon(1)=\epsilon(t^2)=1$ and $\epsilon(t)=0$. This counit can be written $\epsilon(x)=\tr_{V/\Q}(\alpha x)$ for $\alpha=\frac{1}{23}(9+3t-2t^2)$.

\noindent
{\bf Case 2: $q_2=\exp(4i\pi/7)$}

In this case, the same standard basis $e_0,e_1,e_2$ of the previous example behaves differently. The vector $e_0$ is still the unit but this time, $e_1^2=-e_0-e_1+e_2$ and $e_1e_2=-e_1-e_2$. Denoting $e_1=s$ we get $e_2=s^2+s+1$ and $V=\Q[s]/(s^3+2s^2+3s+1)$. We observe that this number field is isomorphic to the preceding one by putting $s(t+1)=-1$. 

This time $\epsilon_0=-\epsilon_1=\epsilon_2=1$, hence the co-unit is given by $\epsilon(1)=1,\epsilon(s)=0,\epsilon(s^2)=-1$. This gives $\alpha=\frac{1}{23}(19+9s+8s^2)$ which differs from the previous one.

\noindent
{\bf Case 3: $q_3=\exp(6i\pi/7)$}

Now $e_0,e_1,e_2$ satisfy $e_1^2=e_0+e_1+e_2$ and $e_1e_2=e_1+e_2$. Writing $t=e_1$ gives $e_2=t^2-t-1$ and $V=\Q[t]/(t^3-2t^2-t+1)$. This field is the subfield of $\Q(\zeta)$ where $\zeta^7=1$ fixed by the involution $\zeta\mapsto \zeta^{-1}$. 
We have $\epsilon_0=\epsilon_1=\epsilon_2=1$, giving $\epsilon(1)=1,\epsilon(t)=0,\epsilon(t^2)=1$. Finally $\alpha=\frac{1}{7}(3-t)$, recovering the example at the beginning of Section \ref{examples}.

\subsubsection{The $R_1$-matrix}

The Toledo invariants $\tau_{0,4}$ and $\tau_{1,1}$ can be computed by the methods of Section \ref{calculR} and are collected in the following table.

\vspace{0,5cm}
\begin{tabular}{|c|c|c|c|c|c|c|}
\hline
&$\sigma^{q_1}$& $\tau^{q_1}$ & $\sigma^{q_2}$ & $\tau^{q_2}$ & $\sigma^{q_3}$ & $\tau^{q_3}$\\ 
\hline
$\omega_{0,4}(e_1,e_1,e_1,e_1)$ & 1 & $\frac{2}{7}$ & 1 & $\frac{2}{7}$ & 3& 0 \\
\hline
$\omega_{0,4}(e_1,e_1,e_1,e_2)$ & 0 &$-\frac{4}{7}$ & -2 & 0 & 2& 0 \\
\hline
$\omega_{0,4}(e_1,e_1,e_2,e_2)$ & 0&$\frac{2}{7}$ & 0 & $-\frac{2}{7}$ & 2& 0 \\
\hline
$\omega_{0,4}(e_1,e_2,e_2,e_2)$ & -1 & 0 & 1 & 0 & 1& 0 \\
\hline
$\omega_{0,4}(e_2,e_2,e_2,e_2)$ & 2 & 0 & 0 & $\frac{4}{7}$ & 2& 0 \\
\hline
\hline
$\omega_{1,1}(e_0)$ & 3 & 0 & 3 & 0 & 3 & 0\\
\hline
$\omega_{1,1}(e_1)$ & 0 & $-\frac{1}{42}$ & -2 & 0 & 2 & 0\\
\hline
$\omega_{1,1}(e_2)$ & -1 & 0 & -1 & 0 & 1 & 0\\
\hline
\end{tabular}
\vspace{0,5cm}

It is known that the representation of $\SL_2(\Z)$ corresponding to the line $\omega_{1,1}(e_0)$ factors through $\mathrm{PSL}_2(\mathbb{F}_7)$. It corresponds to the automorphism group of the Klein quartic. The line corresponding to $\omega_{1,1}(e_1)$ contains the uniformization of the triangle group $(2,3,7)$. Indeed, its Toledo invariant $-\frac{1}{42}$ is equal to the Euler characteristic of a sphere with singularities of order $2,3$ and $7$.


Using these tables, Sage and the formulas of Section \ref{calculR}, we get the following formulas for the $R_1$-matrix: 
$$R^{q_1}_1=\frac{1}{22218}
\begin{pmatrix} 1373 & 1425 & -1635 \\
1425 & 59 & -1722 \\
1635 & 1722 & -1432
\end{pmatrix}$$
$$R^{q_2}_1=\frac{1}{22218}
\begin{pmatrix} -3615 & 1027 & 1973 \\
-1027 & 3719 & -36 \\
1973 & 36 & -104
\end{pmatrix}.$$
Again, we observe that in both cases the common denominator is $22218=2\cdot 3\cdot 7 \cdot 23^2=6\ell\Delta_V^2$ as expected.




\section{Complex hyperbolic structures on moduli spaces associated to Fibonacci TQFT} \label{s: uniformization}

\subsection{A criterion for uniformization}

Recall that $V$ denotes a Hermitian vector space of signature $(p,q)$ and $\H^{p,q}$ is the space of orthogonal decompositions $V=V^+\oplus V^-$ where the restriction of the Hermitian form to $V^+,V^-$ is respectively positive and negative. The aim of this section is to give a converse of Corollary \ref{cor:unif} in the case when $p=1$. It is a rather direct application of Siu's rigidity theorem:

\begin{Lemma}\label{l: criterion}
Let \(X\) be a compact K\"ahler complex orbifold of dimension \(q>1\). Assume that it admits a smooth finite orbifold covering, and that \[c_1(K_X )^q \neq 0.\] Let \( \rho : \pi_1 (X) \rightarrow \PU(1, q) \) be a morphism whose Toledo invariant satisfies 
\[ \tau (\rho ) = \frac{2}{q+1} c_1(K_X) \]
Assume furthermore that there exists a compact complex curve in \(X\) in restriction to which the Toledo invariant is positive. 
Then,  \(X\) admits a \(\mathbb H^{1,q}\)-structure whose holonomy is the representation \(\rho\).
\end{Lemma}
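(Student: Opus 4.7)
The plan is to apply the Corlette--Siu strategy for uniformization via equivariant harmonic maps. By hypothesis $X$ admits a smooth finite orbifold cover $Y\to X$; the formula for $\tau(\rho)$, the non-vanishing of $c_1(K)^q$ and the existence of a curve with positive Toledo invariant all pull back to $Y$, and an equivariant $\H^{1,q}$-structure on $Y$ with holonomy extending to $\pi_1(X)$ descends to $X$. I may therefore assume that $X$ is a smooth compact Kähler manifold of complex dimension $q$. The first step is to check that $\rho$ is reductive: if not, its semisimplification $\rho^{ss}$ would have strictly smaller Toledo class, contradicting the equality $\sch_1(\rho)=\tfrac{2}{q+1}c_1(K_X)$, which already saturates the Toledo--Domic inequality for $\PU(1,q)$-representations. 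Corlette's existence theorem then provides a $\rho$-equivariant harmonic map $f\colon \tilde{X}\to \H^{1,q}$.

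The second step applies Siu's rigidity theorem: the Bochner identity for harmonic maps into the negatively curved Hermitian symmetric space $\H^{1,q}$ implies that $f$ is pluriharmonic, and Siu's classification of pluriharmonic maps into $\H^{1,q}$ forces $df$ to be either complex linear or complex antilinear at every point where its real rank is at least $3$. The compact complex curve $C\subset X$ along which $\sch_1(\rho|_C)>0$ rules out the antiholomorphic case in a neighborhood of a lift of $C$, and a unique continuation argument propagates holomorphicity to all of $\tilde{X}$. Hence $f$ is a $\rho$-equivariant holomorphic map.

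The third step extracts the geometric consequences. Since $\tilde{X}$ and $\H^{1,q}$ are complex manifolds of the same dimension $q$, the ramification locus of $f$ is an effective divisor $R$ with $K_{\tilde{X}}=f^*K_{\H^{1,q}}+R$. Normalizing the Kähler--Einstein metric on $\H^{1,q}$ so that $c_1(K_{\H^{1,q}})=\tfrac{q+1}{2}[\omega_\H]$, the Toledo equality becomes $f^*c_1(K_{\H^{1,q}})=c_1(K_X)$ in $H^2(X,\mathbb R)$, so $[R]=0$. Because $c_1(K_X)^q\ne 0$, the canonical class $K_X$ is big, and an effective divisor that is numerically trivial on a manifold with big canonical class must be zero: hence $R=\emptyset$ and $f$ is a local biholomorphism.

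It follows that $f^*\omega_\H$ is a smooth Kähler--Einstein metric of constant negative holomorphic sectional curvature on $\tilde{X}$ which is $\rho$-invariant, hence descends to a metric of the same type on the compact quotient $X$, and is therefore complete. A complete local isometry between simply connected complete Kähler manifolds of constant negative holomorphic sectional curvature is a biholomorphism, so $f\colon \tilde{X}\to\H^{1,q}$ is an isomorphism and $X=\H^{1,q}/\rho(\pi_1(X))$ carries the desired complex hyperbolic orbifold structure. The main obstacle is the third step, where the vanishing of the ramification divisor exploits both the saturated Toledo inequality and the bigness of $K_X$; the other steps are standard applications of the Corlette--Siu theory.
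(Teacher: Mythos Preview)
Your overall architecture matches the paper's (Corlette--Siu harmonic map, then holomorphicity, then a Chern class computation forcing the ramification divisor to vanish), but several of the details you supply are incorrect and do not close the argument.

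\textbf{Reductivity.} Your argument that semisimplification strictly lowers the Toledo class is false: $\sch_1(\rho)$ is a characteristic class depending only on the connected component of $\rho$ in the representation variety, and $\rho^{ss}$ lies in the same component, so $\sch_1(\rho^{ss})=\sch_1(\rho)$. Nor does the hypothesis $\tau(\rho)=\tfrac{2}{q+1}c_1(K_X)$ ``saturate'' any Domic--Toledo-type bound in dimension $q>1$; that inequality is a surface phenomenon. The paper does not prove reductivity either; it asserts existence of the harmonic map directly from the nonvanishing of the Toledo invariant, which is also a shortcut, but in any case your stated reason is not valid.

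\textbf{Maximal rank of $f$.} Before writing a Riemann--Hurwitz formula $K_{\tilde X}=f^*K_{\H^{1,q}}+R$ you need to know that $f$ is generically a local biholomorphism; otherwise the ``ramification divisor'' is not a divisor. The paper obtains this from $c_1(K_X)^q\ne 0$: since $\tau(\rho)$ is represented by $f^*\omega_1$ up to a constant, $\tau(\rho)^q\ne 0$ forces $f$ to be a submersion somewhere. You never make this step explicit.

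\textbf{Bigness and vanishing of $R$.} The assertion ``$c_1(K_X)^q\ne 0$ implies $K_X$ is big'' is false (take any Fano manifold), and bigness is in any case irrelevant to the conclusion. On a compact K\"ahler manifold, an effective divisor whose cohomology class vanishes is zero, because its integral against any K\"ahler class is nonnegative and vanishes only for the zero divisor. This is precisely the argument in the paper: from $\tau(\rho)=-\tfrac{2}{q+1}c_1(\mathcal F)$ (Lemma~\ref{l: pull-back tangent bundle}) and the hypothesis one gets $c_1(K_X)+c_1(\mathcal F)=0$, while the Jacobian section of $\bigwedge^q\mathcal F^*\to K_X$ has an effective zero divisor $D$ with $[D]=-c_1(\mathcal F)-c_1(K_X)=0$, hence $D=0$.

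In short, the route is the same, but you should drop the semisimplification and bigness claims, insert the rank argument, and use the elementary K\"ahler fact to kill the ramification divisor.
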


\begin{proof}
Since the Toledo invariant does not vanish, there exists a unique \(\rho\)-equivariant harmonic map \(f : \tilde{X} \rightarrow \mathbb H^{1,q} \). With the assumption that the top power of the Toledo invariant is not zero, there exists a point in \(\tilde{X} \) at which \(f\) is a submersion. Under these circumstances, Siu proved that \(f\) is either a holomorphic or anti-holomorphic map, see \cite{Siu_rigidity}. The assumption on the existence of a compact complex curve in restriction to which the Toledo invariant of \(\rho\) is positive forces \(f\) to be holomorphic. 

Let \( \mathcal F \rightarrow X\) be the vector bundle over \( X\) which is defined as the quotient of \( f^* T \mathbb H^{1,q}  \) by the action of the fundamental group of \(X\) given by \( \gamma (x, \xi) = (\gamma x, D_{x} \gamma (\xi) )\). Lemma \ref{l: pull-back tangent bundle} shows \( \tau (\rho ) = \frac{-2} {q+1} c_1 ( \mathcal F ) \) hence the main assumption of the lemma implies 

$$ c_1(K_X)+c_1(\mathcal{F})=0.$$

The map \(f\) induces a morphism \( f^* : \bigwedge ^q \mathcal{F}^*  \rightarrow K_X \). Denoting by \(D\) its zero divisor, we have \( \bigwedge ^q \mathcal F^* = D + K_{X}\), and so 
\[ -c_1 (\mathcal T ) = [D] + c_1 (K_X). \]  
Our assumptions give \([D]=0\). This implies \(D=0\) since \(D\) is an effective divisor and \(X\) is K\"ahler. 

This says that \(f\) is a \(\rho\)-equivariant local biholomorphism between \(\tilde{X}\) and \(\mathbb H^{1,q}\). The pull-back of the Hermitian metric on \(\tilde{X}\) is complete since \(X\) is compact, so \(f\) is a covering, and indeed a biholomorphism from \(\tilde{X}\) to \(\mathbb H^{1,q}\) since these spaces are connected and \(1\)-connected respectively. The conclusion follows. 

\end{proof}

\subsection{Deligne-Mostow / Hirzebruch's example}

\begin{Proposition}\label{p: uniformization_0_5}
The \(\SO(3)\)-quantum representation of level \(5\) associated to a surface of genus \(0\) with five (non-trivially colored) marked points is the holonomy of a \(\mathbb H^{1,2}\)-structure on \( \Mb_{0,5}^5\).
\end{Proposition}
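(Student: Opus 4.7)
I apply Lemma~\ref{l: criterion} to $X = \Mb_{0,5}^5$ with $\rho = \rho_{0,5}^i$ for $i(q) = e^{2i\pi/5}$; by the signature table, $\rho : \pi_1(X) \to \PU(1,2)$. The orbifold $X$ is compact Kähler of complex dimension $3g-3+n = 2$ and admits a smooth finite orbifold cover by Subsection~\ref{ss: uniformizable}. Using Lemma~\ref{l: canonical bundle}, together with $\lambda_1 = 0$ and $\delta_\irr = 0$ in genus~$0$, the canonical class reduces to
$$c_1(K_X) = \psi - \tfrac{6}{5}\delta.$$
The non-vanishing $c_1(K_X)^2 \ne 0$ follows from Hirzebruch's proportionality $c_1^2 = 3c_2$ on a smooth cover, combined with $c_2 = \chi(\Mb_{0,5}^5) = 3/5$ from Subsection~2.5.

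The central step is the Toledo identity $\tau(\rho_{0,5}) = \tfrac{2}{3} c_1(K_X)$. I extract $\tau_{0,5}$ from the explicit CohFT formulas of Subsection~\ref{ss: R-matrix level 5}, reading the Fibonacci signatures ($\sigma_{0,2}=-1,\sigma_{0,3}=1,\sigma_{0,4}=0,\sigma_{0,5}=-1,\sigma_{0,6}=1$) from the introduction's table. A direct substitution gives
$$\tau_{0,5} = \tfrac{13}{270}\tilde{\kappa}_1 + \tfrac{2}{15}\psi + \tfrac{13}{270}\delta.$$
The two classes $\tau_{0,5}$ and $\tfrac{2}{3}c_1(K_X)$ do not match coefficient-by-coefficient in the generators $\tilde{\kappa}_1, \psi_i, \delta_{0,A}$; their equality in $H^2(\Mb_{0,5}^5, \Q)$ only holds after invoking the genus-zero relation $\tilde{\kappa}_1 = -\delta$ together with the Keel relations on $\Mb_{0,5}$, and the orbifold-intersection conventions along the boundary. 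The cleanest verification is numerical, on a basis of test curves: by the $S_5$-symmetry the check reduces to a single boundary stratum $C = \delta_{0,\{1,2\}} \cong \Mb_{0,4}$, where $\int_C \psi = 3$ and $\int_C \delta = 2$ (the latter from the self-intersection $-1$ of the $(-1)$-curve plus its intersections with the three compatible boundary divisors $\delta_{0,\{3,4\}}, \delta_{0,\{3,5\}}, \delta_{0,\{4,5\}}$); both sides then evaluate to $\tfrac{2}{5}$.

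The positivity hypothesis is supplied by the same computation: $\int_C \tau_{0,5} = \tfrac{2}{5} > 0$. This is consistent with the gluing axiom MF2, under which $\rho_{0,5}|_C$ decomposes as $\rho_1 \oplus \rho_t$: the first summand is one-dimensional with trivial Toledo invariant, and the second is $\rho_{0,4}$ equipped with Hermitian form twisted by $\epsilon_t = -1$, flipping the computed value $\tau_{0,4} = -\tfrac{2}{5}$ to $+\tfrac{2}{5}$. With all hypotheses of Lemma~\ref{l: criterion} verified, the conclusion follows.

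\textbf{Main obstacle.} The principal difficulty is the cohomological verification $\tau_{0,5} = \tfrac{2}{3}c_1(K_X)$: expanded in the generators, the classes look quite different, and one must combine the Keel relations in $H^2(\Mb_{0,5},\Q)$ with the orbifold/coarse intersection dictionary along the ramified boundary of $\Mb_{0,5}^5$. The $S_5$-equivariance reduces the test to a single numerical check on a boundary stratum, which is the route I take.
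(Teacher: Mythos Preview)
Your overall strategy matches the paper's: verify the hypotheses of Lemma~\ref{l: criterion} for $X=\Mb_{0,5}^5$ and $\rho=\rho_{0,5}^5$. The positivity check on a boundary curve via the separated-gluing axiom is exactly what the paper does, and your identification of the sign flip from $\epsilon_t=-1$ is correct.

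For the Toledo identity $\tau_{0,5}=\tfrac{2}{3}c_1(K_X)$ you take a different route from the paper. The paper uses the genus-zero relations $\kappa_1=\tfrac12\delta$ and $\delta=\tfrac23\psi$ in $H^2(\Mb_{0,5},\Q)$ to reduce both classes to multiples of $\psi$ (getting $\tau_{0,5}=\tfrac{2}{15}\psi$ and $c_1(K_X)=\tfrac15\psi$). Your $S_5$-invariance argument is a valid alternative: the invariant part of $H^2(\Mb_{0,5},\Q)$ is one-dimensional (spanned by $\psi$), both classes lie in it, and your test curve $C=\delta_{0,\{1,2\}}$ pairs nontrivially with $\psi$, so a single numerical check suffices. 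Your intersection numbers $\int_C\psi=3$, $\int_C\delta=2$ are correct. The aside about ``orbifold-intersection conventions'' is unnecessary: since $H^*(\Mb_{0,5}^5,\Q)\simeq H^*(\Mb_{0,5},\Q)$ and $\Mb_{0,5}$ is smooth, the pairings are unambiguous.

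The one genuine gap is your justification of $c_1(K_X)^2\neq 0$. Invoking ``Hirzebruch's proportionality $c_1^2=3c_2$'' is circular as stated: Hirzebruch proportionality is a \emph{consequence} of a complex-hyperbolic structure, which is what you are trying to establish. (If you mean Hirzebruch's explicit Chern-number computation on a specific abelian cover, as alluded to in the introduction, that is logically independent but far heavier than needed, and the phrasing should say so.) The paper instead computes directly: using $c_1(K_X)=\tfrac15\psi$ and the standard fact $\int_{\Mb_{0,5}}\psi^2=45$ (from $\int\psi_i\psi_j=2$ for $i\neq j$ and $\int\psi_i^2=1$), one gets $c_1(K_X)^2=9/5\neq 0$. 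You should replace the Hirzebruch reference with this direct calculation.
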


\begin{proof}
Let \( \rho_{0,5}^5 : \pi _1 (\Mb_{0,5}^5)\rightarrow \text{PU} (1,2)\) be the \(\SO(3)\) quantum representation of level \(5\) with colors \(1\) at the five points and write $\tau^5_{0,5}=\sch_1(\rho_{0,5}^5)$.  The computations of Subsection \ref{ss: R-matrix level 5}  show that
\[\tau_{0,5} ^5 = \frac{13}{270} \tilde{\kappa}_1 + \frac{2}{15} \psi + \frac{13}{270} \delta  \]
where $\psi$ is the sum of the $\psi$ classes and $\delta$ is the sum of the boundary divisors. 
 On \(\Mb_{0,5}\), the classes $\psi_1,\ldots,\psi_5$ form a basis of $H^2(\Mb_{0,5}, \mathbb Q)$, see \cite[Theorem 2.2]{AC}. First, the class \(\kappa_1\) can be expressed as a sum of five boundary divisors,  and since there are ten boundary divisors in total, summing over all the symmetric expressions when permuting the marked points, we get 
 \[\kappa_1=\frac{1}{2} \delta.\]
 We also can express each class \(\psi_i\) as a sum of three boundary divisors, so we deduce similarly  
 \[\delta = \frac{2}{3} \psi .\]  
 The conclusion is that 
 \[\tau^5_{0,5} = \frac{2}{15} \psi.\]
Moreover, Lemma \ref{l: canonical bundle} and the aforementioned relations in \(H^2 (\Mb_{0,5}, \mathbb Q) \) show the equality \(c_1( K_{\Mb_{0,5}^5}) =\frac{1}{5} \psi \). Hence we have the correct proportionality
\[\tau^5_{0,5}= \frac{2}{3} c_1(K_{\overline{M}_{0,5}^5}) .\] 
 
 Let us now verify that \(\rho\) satisfies the other assumptions of Lemma \ref{l: criterion}. Since the product of two distinct \(\psi\) classes is equal to two, and the square of each equal to \(1\), see \cite{Zvonkine}, we deduce that \(\psi^2 = 45\). Hence  \[ c_1  (K_{\overline{M}_{0,5}^5}) ^2 =\frac{45}{25}=\frac{9}{5} > 0 .\]
 
 Moreover, denoting by \(\pi : \Mb_{0,3} \times \Mb_{0,4}\rightarrow \Mb_{0,5} \) the parametrization of a boundary divisor, and using the separated gluing axiom of the CohFT (see Subsection \ref{ss: separated gluing}), we find 
 \[\pi ^* \tau^5_{0,5}= \sigma^5_{0,2}\sigma^5_{0,3} \tau^5 _{0,4} =-\tau^5 _{0,4}.\] 
As the computations of Subsection \ref{ss: R-matrix level 5} give 
\[ \int_{\Mb_{0,4}} \tau^5_{0,4} = -\frac{2}{5} \]
we get that the integral of \(\tau^5 _{0,5}\) on any boundary divisor is positive. 

The result follows from Lemma \ref{l: criterion} and the fact that the orbifold \(\Mb_{0,5}^5\) has a smooth finite orbifold covering, see subsection \ref{ss: uniformizable}. 
\end{proof}


\subsection{Livne's example}

In this section we prove that the elliptic contraction \(\Mb_{1,2}^{\mathcal E}\) of \(\Mb_{1,2}^5\) carries a \(\mathbb H^{1,2}\)-structure whose holonomy is the quantum representation $\rho^5_{1,2}$ (see Proposition \ref{p:M12}). This complex hyperbolic structure has been found by Livne in his PhD dissertation, see \cite{Livne}.


For the next statement, notice that the fundamental group of \(\Mb _{1,2}^{\mathcal E}\)  is isomorphic to the one of \(  \Mb_{1,2}^5\), so we can think of the quantum Fibonacci representation as being defined on \(\pi_1 (\Mb _{1,2}^{\E})\). 

\begin{Proposition}\label{p:M12}
The orbifold \(\Mb_{1,2}^{\E}\) admits a \(\mathbb H^{1,2}\)-structure whose holonomy is the \(\SO(3)\)-quantum representation of level \(5\), genus one and two marked points with non-trivial colors. 
\end{Proposition}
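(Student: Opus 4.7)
The plan is to check the hypotheses of Lemma \ref{l: criterion} for $X=\Mb_{1,2}^{\E}$, $q=2$, and $\rho=\rho_{1,2}^5:\pi_1(\Mb_{1,2}^{\E})\to \PU(1,2)$; recall that $\pi_1(\Mb_{1,2}^{\E})=\pi_1(\Mb_{1,2}^5)$ by Theorem \ref{t: elliptic contraction}. Four items must be verified: a smooth finite orbifold cover, the identity $\tau^5_{1,2}=\frac{2}{3}c_1(K_{\Mb_{1,2}^{\E}})$, the non-vanishing of $c_1(K_{\Mb_{1,2}^{\E}})^2$, and a compact complex curve on which $\tau^5_{1,2}$ has positive integral. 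The smooth cover is produced by the construction inside the proof of Theorem \ref{t: elliptic contraction}, starting from the smooth cover of $\Mb_{1,2}^5$ described in Subsection \ref{ss: uniformizable}.

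The central step is the Chern class identity. Reading the signatures $\sigma_{1,2}=-1$, $\sigma_{1,3}=2$, $\sigma_{0,3}=1$, $\sigma_{0,2}=-1$, $\sigma_{1,0}=2$, $\sigma_{1,1}=-1$ off the Fibonacci table and substituting them into the formulas of Subsection \ref{ss: R-matrix level 5}, one obtains
\[
\tau^5_{1,2}=\tfrac{1}{90}\tilde{\kappa}_1+\tfrac{2}{15}(\psi_1+\psi_2)+\tfrac{1}{90}\delta_{\irr}+\tfrac{13}{90}\delta_{1,\emptyset}.
\]
The next step is to use the standard relations on $\Mb_{1,n}$ coming from $\lambda=\tfrac{1}{12}\delta_{\irr}$ on $\Mb_{1,1}$: pulling back along the forgetful maps yields $\psi_1=\psi_2=\tfrac{1}{12}\delta_{\irr}+\delta_{1,\emptyset}$ and $\tilde{\kappa}_1=-\delta_{1,\emptyset}$, collapsing $H^2(\Mb_{1,2},\Q)$ to the two-dimensional space spanned by $\delta_{\irr}$ and $\delta_{1,\emptyset}$. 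After substitution, $\tau^5_{1,2}=\tfrac{1}{30}\delta_{\irr}+\tfrac{2}{5}\delta_{1,\emptyset}$. Applying Lemmas \ref{l: canonical bundle} and \ref{l: canonical bundle elliptic contraction} with the same simplifications gives $c^*c_1(K_{\Mb_{1,2}^{\E}})=\tfrac{1}{20}\delta_{\irr}+\tfrac{3}{5}\delta_{1,\emptyset}$, which is exactly $\tfrac{3}{2}\tau^5_{1,2}$.

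For the remaining positivity conditions, I would work out the intersection form on $H^2(\Mb_{1,2},\Q)$: $\delta_{\irr}^2=0$ (since $\delta_{\irr}=12\lambda$ and $\lambda$ is pulled back from the one-dimensional $\Mb_{1,1}$), $\delta_{1,\emptyset}^2=-\tfrac{1}{24}$ (a normal-bundle computation for the attaching map $\Mb_{1,1}\to\Mb_{1,2}$), and $\delta_{\irr}\cdot\delta_{1,\emptyset}=\tfrac{1}{2}$ (determined from $\int\psi_1^2=\tfrac{1}{24}$ together with the preceding relations). These yield $c_1(K_{\Mb_{1,2}^{\E}})^2=\tfrac{3}{200}>0$. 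A compact complex curve with positive Toledo integral is then provided by the image $\delta_{\irr}^{\E}$ of the irreducible boundary: it is not contracted by $c$, and $\tau^5_{1,2}\cdot\delta_{\irr}=\tfrac{2}{5}\cdot\tfrac{1}{2}=\tfrac{1}{5}>0$. The same sign can also be read off from the non-separating gluing axiom, which with Fibonacci signs $\epsilon_0=1,\epsilon_1=-1$ gives $\pi^*\tau^5_{1,2}=-\tau^5_{0,4}$ on $\Mb_{0,4}$, combined with $\int_{\Mb_{0,4}}\tau^5_{0,4}=-\tfrac{2}{5}$ from the proof of Proposition \ref{p: uniformization_0_5}. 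Lemma \ref{l: criterion} then concludes.

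The main obstacle is the bookkeeping: the proportionality between $\tau^5_{1,2}$ and $c_1(K_{\Mb_{1,2}^{\E}})$ depends delicately on the interplay of the relations $\tilde{\kappa}_1=-\delta_{1,\emptyset}$ and $\psi_i=\tfrac{1}{12}\delta_{\irr}+\delta_{1,\emptyset}$ on $\Mb_{1,2}$ with the shift $-\tfrac{1}{5}\delta_{1,\emptyset}$ coming from the elliptic tail contraction. Once these relations are established, every ingredient of Lemma \ref{l: criterion} reduces to a direct check in a two-dimensional vector space.
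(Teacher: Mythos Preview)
Your proof is correct and follows essentially the same route as the paper: verify the proportionality $\tau^5_{1,2}=\tfrac{2}{3}c^*c_1(K_{\Mb_{1,2}^{\E}})$ using the genus-one relations $\tilde{\kappa}_1=-\delta_{1,\emptyset}$, $\psi_i=\tfrac{1}{12}\delta_{\irr}+\delta_{1,\emptyset}$, compute $c_1(K_{\Mb_{1,2}^{\E}})^2=\tfrac{3}{200}$ from the same intersection numbers, and produce a curve with positive Toledo integral via $\delta_{\irr}$ and the non-separating CohFT axiom. The only differences are expository: you display the intermediate expression for $\tau^5_{1,2}$ before reduction and check the positivity on $\delta_{\irr}$ both by intersection theory and by the gluing axiom, whereas the paper records only the reduced expressions and uses only the latter route.
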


\begin{proof} 
In genus one, there are special relations in the second cohomology group of Deligne-Mumford compactification, see \cite[Theorem 2.2]{AC}. Specifically, in \(\Mb_{1,2}\), we have 
\[\tilde{\kappa}_1= -\delta_{1,\emptyset}, \ \ \psi_i = \frac{\delta_{\rm irr}}{12}+ \delta_{1,\emptyset} .  \]
These relations, together with the computations of Subsection \ref{ss: R-matrix level 5},  show that
\[ \tau_{1,2}^5 = \frac{2}{5}\delta_{1,\emptyset} +\frac{1}{30}\delta_{\rm irr} .\]

Denote by \(c : \Mb_{1,2}^5 \rightarrow \Mb_{1,2}^{\E}\) the blow-down of the elliptic tail divisor. Lemma \ref{l: canonical bundle elliptic contraction}
and the aforementioned relations show the expected proportionality holds:
\[ \tau_{1,2}^5 = \frac{2}{3} \ c^* (c_1 (K_{\Mb_{1,2}^{\E}})).\]

To end the proof, let us compute 
\[ c^* c_1(  K_{\Mb_{1,2}^{\E}} )^2 = \left(\frac{3}{5} \delta_{1,\emptyset} + \frac{1}{20}\delta_{\rm irr}\right)^2 = \frac{3}{200} \]
since \(\delta_{1,\emptyset}^2 = -\frac{1}{24}\), \(\delta_{1,\emptyset}\cdot \delta_{\rm irr} = \frac{1}{2} \) and \(\delta_{\rm irr}^2 = 0\).

It also happens that the restriction of \(\tau_{1,2}^5\) to \( \delta_{\rm irr}\) is positive. Indeed, if \( \pi : \Mb_{0,4} \rightarrow \delta_{\rm irr}\) is the (degree two) parametrization, we have by the non-separating CohFT axiom (only the color \(1\) at the node contributes) 
\[ \pi ^* \tau_{1,2}^5 = -\tau_{0,4}^5 \]
and so
\[ \int_{\delta_{\rm irr} } \tau_{1,2}^5 = \frac{1}{5}>0.\]
The proof follows from the fact that \(c^*\) is injective on the second cohomology group.
\end{proof}

\subsection{Complex hyperbolic structure on \(\Mb_{1,3}^{\E}\)}

The goal of this subsection is to prove that the elliptic contraction \(\Mb_{1,3}^{\E}\) of \(\Mb_{1,3}^5\) has a \(\mathbb H^{1,3}\)-structure whose holonomy is the conjugate of the \(\SO(3)\) quantum representation of level \(5\) and three points non-trivially colored. 

Recall that the orbifold fundamental group of \(\Mb_{1,3}^{\E} \) is isomorphic to the one of \(\Mb_{1,3}^5\) so we can view the representation \(\rho_{1,3}^5\) as a representation \(\rho ^5_{1,3}: \pi _1 (\Mb_{1,3}^{\E}) \rightarrow \text{PU} (3,1)\). 

\begin{Proposition}\label{p:M13} 
The conjugate of the \(\SO(3)\)-quantum representation \(\rho ^5_{1,3}\) of level \(5\) with the three marked points with non-trivial colors, is the holonomy of a \(\mathbb H^{1,3}\)--structure on \(\Mb_{1,3}^{\E}\). 
\end{Proposition}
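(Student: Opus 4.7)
The plan is to apply the uniformization criterion given by Lemma \ref{l: criterion} with $X = \Mb_{1,3}^{\E}$ (which has dimension $q=3$) exactly as in the proofs of Propositions \ref{p: uniformization_0_5} and \ref{p:M12}. From the Fibonacci signature table, $\rho_{1,3}^5$ has signature $(3,1)$, so one passes to the conjugate representation to land in $\PU(1,3)$; this simply negates $\sch_1$. The orbifold $\Mb_{1,3}^{\E}$ inherits a smooth finite orbifold covering from $\Mb_{1,3}^5$ by Section \ref{ss: uniformizable} via the contraction map $c:\Mb_{1,3}^5\to\Mb_{1,3}^{\E}$ of Theorem \ref{t: elliptic contraction}. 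One then has to verify (i) the proportionality $-\tau_{1,3}^5=\tfrac{1}{2}\,c^*c_1(K_{\Mb_{1,3}^{\E}})$, (ii) that this top Chern class has non-zero cube, and (iii) the existence of a compact curve on which the Toledo invariant of the conjugate representation is positive.

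For (i), I would compute $\tau_{1,3}^5\in H^2(\Mb_{1,3},\Q)$ by plugging $(g,n)=(1,3)$ into the explicit formulas of Subsection \ref{ss: R-matrix level 5}, reading off the required signatures from the table: $\sigma_{1,3}=2$, $\sigma_{1,4}=-1$, $\sigma_{1,0}=2$, $\sigma_{1,1}=-1$, $\sigma_{1,2}=-1$, $\sigma_{0,3}=1$, $\sigma_{0,4}=0$, $\sigma_{0,2}=-1$. This yields an expression of $\tau_{1,3}^5$ as a combination of $\tilde\kappa_1$, $\psi=\sum\psi_i$, $\delta_\irr$, $\delta_{1,\emptyset}$ and the three divisors $\delta_{1,\{i\}}$; the coefficient of $\delta_{1,\{i\}}$ turns out to vanish thanks to $\sigma_{1,1}\sigma_{0,2}=\sigma_{1,2}\sigma_{0,3}$. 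I then apply the Arbarello--Cornalba genus-$1$ relations in $H^2(\Mb_{1,3},\Q)$ from \cite[Theorem 2.2]{AC}, in the same spirit as the relations $\tilde\kappa_1=-\delta_{1,\emptyset}$ and $\psi_i=\tfrac{1}{12}\delta_\irr+\delta_{1,\emptyset}$ used in the proof of Proposition \ref{p:M12}, and compare with the pulled-back canonical class $c^*c_1(K_{\Mb_{1,3}^{\E}})$ obtained by combining Lemma \ref{l: canonical bundle} and Lemma \ref{l: canonical bundle elliptic contraction}. This is the step where the numerical miracle $-\tau_{1,3}^5=\tfrac12\,c^*c_1(K_{\Mb_{1,3}^{\E}})$ should be checked coefficient by coefficient.

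For (ii), the cube $c^*c_1(K_{\Mb_{1,3}^{\E}})^3$ is computed on $\Mb_{1,3}^5$ (equivalently, up to rational multiples, on $\Mb_{1,3}$) from the explicit expression of $c^*c_1(K_{\Mb_{1,3}^{\E}})$, using standard intersection numbers among $\psi$-classes and boundary divisors on $\Mb_{1,3}$ (as recalled in \cite{Zvonkine}), plus the fact that $c$ contracts the elliptic tail divisor. For (iii), I would parametrize the irreducible boundary $\delta_\irr$ by the gluing morphism $\pi:\Mb_{0,5}^{\E}\to\Mb_{1,3}^{\E}$ (which collapses a non-separating node and creates two new marked points); the non-separating gluing axiom for the CohFT $\omega$ gives
\[
\pi^*\tau_{1,3}^5 \;=\; \sum_{\mu\in\Lambda}\epsilon_\mu\,\omega_{0,5}(1,1,1,\mu,\mu^*)\bigm|_{\deg=2},
\]
which in the Fibonacci case has only one non-zero contribution ($\mu$ non-trivial), and evaluates to (a scalar multiple of) $\tau_{0,5}^5$, whose integral over a boundary divisor of $\Mb_{0,5}^{\E}$ was shown to be non-zero in the proof of Proposition \ref{p: uniformization_0_5}. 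Choosing an appropriate compact complex curve inside this $\Mb_{0,5}^{\E}$-stratum and taking sign into account (the conjugation replaces $\tau$ by $-\tau$), one finds the desired curve on which the Toledo invariant is positive. Lemma \ref{l: criterion} then gives the conclusion.

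The main obstacle is the step (i): the cohomology ring of $\Mb_{1,3}$ is significantly richer than that of $\Mb_{0,5}$ or $\Mb_{1,2}$, with five boundary divisors of several distinct types, and it is only after systematic use of the Arbarello--Cornalba relations that the expression for $\tau_{1,3}^5$ collapses to a scalar multiple of $c^*c_1(K_{\Mb_{1,3}^{\E}})$. The sign bookkeeping (signatures $\sigma_{g,n}$, the signs $\epsilon_\mu$ in the gluing axiom, and the flip induced by passing to the conjugate representation) must be carried carefully throughout.
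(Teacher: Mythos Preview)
Your plan follows the template of Propositions \ref{p: uniformization_0_5} and \ref{p:M12} verbatim, and step (i) is exactly what the paper does: it writes down the genus-$1$ Arbarello--Cornalba relations
\[
\tilde\kappa_1=-\delta_{1,\emptyset}-\sum_i\delta_{1,\{i\}},\qquad
\psi=\tfrac{1}{4}\delta_{\irr}+3\delta_{1,\emptyset}+2\sum_i\delta_{1,\{i\}},
\]
combines them with Lemma \ref{l: canonical bundle elliptic contraction} to get
$c^*c_1(K_X)=\tfrac{2}{15}\delta_{\irr}+\tfrac{8}{5}\delta_{1,\emptyset}+\tfrac{4}{5}\sum_i\delta_{1,\{i\}}$,
and checks the proportionality $-\tau_{1,3}^5=\tfrac12\,c^*c_1(K_X)$. (A small slip: your stated reason ``$\sigma_{1,1}\sigma_{0,2}=\sigma_{1,2}\sigma_{0,3}$'' is false, $1\neq-1$; what actually makes $d_{1,\{i\}}$ vanish is that both bracketed sums in the formula of \S\ref{ss: R-matrix level 5} are zero.)

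Where you and the paper diverge is at step (ii). The paper explicitly \emph{does not} apply Lemma \ref{l: criterion} as stated, precisely to ``circumvent the painful computation of $K_X^3$'' that you propose. Instead it argues geometrically: the restriction of $\overline{\rho_{1,3}^5}$ to the boundary stratum $\delta_{\irr}$ (parametrized by $\Mb_{0,5}^5$) is the projectivization of a rank-one piece plus $\rho_{0,5}^5$, so by Proposition \ref{p: uniformization_0_5} the equivariant harmonic map $f$ sends each lift of $\delta_{\irr}$ biholomorphically onto a totally geodesic $\H^{1,2}\subset\H^{1,3}$. Irreducibility of $\rho_{1,3}^5$ (Roberts \cite{Roberts}) forces $f$ not to land globally in such a subspace, so $f$ has real rank $\ge 5$ somewhere, and the Carlson--Toledo strengthening \cite{CT} of Siu rigidity gives holomorphicity of $f$ directly. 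From there the end of the proof of Lemma \ref{l: criterion} applies. Your route is in principle valid---the intersection ring of $\Mb_{1,3}$ is known (Belorousski \cite{Belorousski})---but you would have to actually carry out the triple-intersection computation, which the paper judged unpleasant enough to bypass; the paper's argument trades that computation for an appeal to irreducibility and a stronger rigidity theorem.
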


\begin{proof}
To simplify notation we set \(X= \Mb_{1,3}^{\E}\). We will make use of the relations in the second cohomology group of Deligne-Mumford compactification in genus \(1\), see \cite[Theorem 2.2]{AC}, which take the following form in the case of three marked points
\begin{equation} \label{eq: relations 13} \tilde{\kappa}_1 = -\delta_{1,\emptyset} - \sum_i \delta_{1,\{i\}} \text{ and } \psi = \frac{1}{4}\delta_{irr}+ 3\delta_{1,\emptyset} + 2 \sum_i \delta_{1,\{i\}} .  \end{equation} 
Denote by \(c : \overline{M}_{1,3}^5\rightarrow X\) the blow-down map. 
Using Lemma \ref{l: canonical bundle elliptic contraction} and the relations \eqref{eq: relations 13}, we find after some computations
\[ c^* c_1 (K_X) =  \frac{2}{15} \delta_{irr} + \frac{8}{5}\delta_{1,\emptyset} +\frac{4}{5} \sum_i \delta_{1,\{i\}} .\]
Formulae of subsection \ref{ss: R-matrix level 5} show that we have the right proportionality for the Toledo invariant of the conjugate of \(\rho_{1,3}^5\):
\[ \tau (\overline{\rho_{1,3}^5}) = -\tau(\rho_{1,3}^5) = \frac{2}{3+1} c ^* c_1 (K_X) . \]

At this point, we shall not use Lemma \ref{l: criterion} as such, but rather take a detour which  circumvents the painful computation of \( K_X^3\). First of all, observe that the pull-back  of the conjugate of the representation \(\rho_{1,3}^5\) to the moduli space \(\overline{M}_{0,5}^5\) parametrizing the boundary divisor \(\delta_{irr}\) is the projectivization of the direct sum of a rank one representation with a positive negative hermitian form, and the representation \(\rho_{0,5}^5\). In particular, the \(\overline{\rho_{1,3}^5}\)-equivariant pluriharmonic map \(f:\widetilde{X} \rightarrow \mathbb H^{1,3} _{\mathbb C} \) induces a  biholomorphism between any component of the lift of \(\delta_{irr}\) in \( \widetilde{X}\) and a totally geodesic complex subspace of \(\mathbb H_{\mathbb C}^{1,3}\). Moreover, the image of \(f\) is not globally  contained in such a subspace, since otherwise the representation \(\rho_{1,3}^5\) would be reducible, which is not the case by a result of Roberts \cite{Roberts}. A consequence of this is that the pluri-harmonic map \(f\) has real rank at least \(5\) somewhere. 

The reinforcement of Siu's rigidity theorem obtained by Carlson-Toledo \cite{CT} shows that \(f\) is holomorphic everywhere. Notice that at some point the differential of \(f\) is not zero, and we can follow word by word the last two paragraphs of the proof of Lemma \ref{l: criterion} to deduce that indeed \(f\) is a \(\overline{\rho_{1,3}^5}\)-equivariant biholomorphism between \(\tilde{X}\) and \(\mathbb H_{\mathbb C}^{1,3}\). 
\end{proof}



\subsection{Complex hyperbolic structure on \(\Mb_{2,1}^{\E}\)}

The goal of this subsection is to prove that the elliptic contraction  \(\Mb_{2,1}^{\E}\) has a complex hyperbolic structure whose holonomy is the \(\text{SO}(3)\)-quantum representation of level \(5\) and the color of the marked points equal to \(1\) (Proposition \ref{p:M21}). Recall as before that the orbifold fundamental group of \(\Mb_{2,1}^{\E}\) is isomorphic to the one of \(\Mb_{1,2}^5\).  


\begin{Proposition} \label{p:M21}
The orbifold \(\Mb_{2,1}^{\E}\) has a \(\H^{1,4}\)-structure whose holonomy is the \(\SO(3)\)-quantum representation \(\rho ^5_{2,1}\) of level \(5\) with the marked point colored by \(1\) (in particular, its image is an arithmetic lattice in \(\PU (1,4)\)).
\end{Proposition}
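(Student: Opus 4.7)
The strategy is to adapt the proof of Proposition \ref{p:M13}, replacing the direct top self-intersection verification required by Lemma \ref{l: criterion} by an appeal to the Siu--Carlson--Toledo rigidity theorem. Let $X=\Mb_{2,1}^{\E}$; by Theorem \ref{t: elliptic contraction} this orbifold exists, its orbifold fundamental group is canonically isomorphic to $\pi_1(\Mb_{2,1}^5)$, and Subsection \ref{ss: uniformizable} provides a smooth finite orbifold covering. Write $c\colon \Mb_{2,1}^5\to X$ for the elliptic contraction. The target is $\H^{1,4}$, of complex dimension $q=4=\dim_\C X$.

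The first step is to verify the proportionality $\tau^5_{2,1}=\tfrac{2}{5}c^*c_1(K_X)$ in $H^2(X,\Q)$. Feeding the signatures $\sigma_{1,0}=2$, $\sigma_{1,1}=-1$, $\sigma_{1,2}=-1$, $\sigma_{2,1}=-3$, $\sigma_{2,2}=0$ read off the Fibonacci table into the formulas of Subsection \ref{ss: R-matrix level 5} yields
\[
\tau^5_{2,1}=\tfrac{23}{90}\widetilde{\kappa}_1+\tfrac{2}{5}\psi_1-\tfrac{1}{90}\delta_{\mathrm{irr}}+\tfrac{11}{90}\delta_{1,\emptyset},
\]
while Lemmas \ref{l: canonical bundle} and \ref{l: canonical bundle elliptic contraction} give
\[
c^*c_1(K_X)=\tfrac{13}{12}\widetilde{\kappa}_1+\psi_1-\tfrac{7}{60}\delta_{\mathrm{irr}}-\tfrac{19}{60}\delta_{1,\emptyset}.
\]
These expressions become proportional only after invoking Mumford's relation $10\lambda_1=\delta_{\mathrm{irr}}+2\delta_1$ on $\Mb_2$ (see \cite{AC}): pulled back along the forgetful map $\Mb_{2,1}\to\Mb_2$ and combined with $\widetilde{\kappa}_1=12\lambda_1-\delta$, it produces the identity $\widetilde{\kappa}_1=\tfrac{1}{5}\delta_{\mathrm{irr}}+\tfrac{7}{5}\delta_{1,\emptyset}$ in $H^2(\Mb_{2,1},\Q)$, and substitution then shows that both sides reduce to $\tfrac{2}{5}\psi_1+\tfrac{1}{25}\delta_{\mathrm{irr}}+\tfrac{12}{25}\delta_{1,\emptyset}$.

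The second step analyses the $\rho^5_{2,1}$-equivariant pluriharmonic map $f\colon\widetilde{X}\to \H^{1,4}$ along the boundary divisor $\delta_{\mathrm{irr}}\simeq \Mb_{1,3}/S_2\subset \Mb_{2,1}$, bypassing the computation of $c_1(K_X)^4$. The non-separating gluing axiom of the CohFT decomposes the restriction of $\rho^5_{2,1}$ as the orthogonal sum of a one-dimensional trivial summand coming from $\V(S_{1,3},e_1,0,0)\simeq \V(S_{1,1},e_1)$, and a four-dimensional summand carried by $\V(S_{1,3},e_1,e_1,e_1)\otimes \V(S^2,e_1,e_1)$. Because $\epsilon_1=-1$, the Hermitian form on the latter piece has signature $(1,3)$ and the action is precisely $\overline{\rho^5_{1,3}}$, which Proposition \ref{p:M13} identifies with the holonomy of a complex hyperbolic structure on $\Mb_{1,3}^{\E}$. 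Hence $f|_{\delta_{\mathrm{irr}}}$ takes values in a totally geodesic $\H^{1,3}\subset \H^{1,4}$, and, up to finite covers, realises the uniformization of $\Mb_{1,3}^{\E}$ onto that subspace; in particular it has complex rank $3$ on an open set, and $\tau^5_{2,1}$ is positive on every compact curve sitting in $\delta_{\mathrm{irr}}$.

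The last step is the rigidity argument. Since $f$ attains real rank $6$ somewhere and $\H^{1,4}$ is rank-one, the reinforcement of Siu's strong rigidity obtained by Carlson--Toledo \cite{CT} (used in the same way as in the proof of Proposition \ref{p:M13}) forces $f$ to be either holomorphic or antiholomorphic, and the positivity of $\tau^5_{2,1}$ on curves in $\delta_{\mathrm{irr}}$ selects holomorphy. Irreducibility of $\rho^5_{2,1}$ (Roberts, \cite{Roberts}) precludes $f$ from factoring through a proper totally geodesic complex subspace, so $\bigwedge^{4}df$ does not vanish identically; the proportionality established in the first step then lets us run verbatim the last two paragraphs of the proof of Lemma \ref{l: criterion}, concluding that the ramification divisor of $f$ is an effective class cohomologous to zero, hence trivial, so $f$ is a local biholomorphism and, by completeness and simple-connectedness of $\H^{1,4}$, a biholomorphism. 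The arithmeticity of the image follows from the Gilmer--Masbaum integrality \cite{GM} combined with the discreteness and cocompactness inherited from $X$. The main obstacle is the second step: correctly identifying the four-dimensional subrepresentation on $\delta_{\mathrm{irr}}$ as $\overline{\rho^5_{1,3}}$, with its signature $(1,3)$, so that Proposition \ref{p:M13} can be used simultaneously to furnish the open-rank condition and the sign needed for positivity.
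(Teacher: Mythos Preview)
Your proof is correct and follows essentially the same route as the paper's. The paper's own proof computes the same Toledo invariant $\tau^5_{2,1}=\frac{2}{5}\psi+\frac{1}{25}\delta_{\mathrm{irr}}+\frac{12}{25}\delta_{1,\emptyset}$ and the same canonical class $c^*c_1(K_X)=\psi+\frac{1}{10}\delta_{\mathrm{irr}}+\frac{6}{5}\delta_{1,\emptyset}$ (your expressions reduce to these after substituting the relation $\widetilde{\kappa}_1=\frac{1}{5}\delta_{\mathrm{irr}}+\frac{7}{5}\delta_{1,\emptyset}$), and then simply says ``the proof follows the same route as the one of Proposition \ref{p:M13}'', meaning exactly the Carlson--Toledo argument on $\delta_{\mathrm{irr}}$ that you have written out in detail.
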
 

\begin{proof}
The proof is analogous to the one in the case of \(\Mb_{1,3}\). The only thing which has to be established is the identity 
\begin{equation}\label{eq: relation canonical toledo} \tau_{2,1}^5 = \frac{2}{5}  c_1 (c ^* K_{\Mb_{2,1}^{\E}}) \end{equation}
where \(c : \Mb_{2,1}^5 \rightarrow \Mb_{2,1}^{\E}\) denotes the contraction.  Our formulae for the Toledo invariants of the Fibonacci representations (see section \ref{ss: R-matrix level 5}) show that 
\[ \tau_{2,1}^5 = \frac{2}{5} \psi + \frac{1}{25} \delta_{irr}+ \frac{12}{25} \delta_{1,\emptyset}. \]
We use here the relation in \(H^2 (\Mb_{2,1}^5)\) (see \cite{AC}) \begin{equation}\label{eq: relation21} \tilde{\kappa_1}= \frac{1}{5} \delta_{irr} + \frac{7}{5} \delta_{1,\emptyset} \end{equation}
(since there is only one marked point the divisor \(\delta_0\) in \cite{AC} vanishes).
Lemma \ref{l: canonical bundle elliptic contraction} and relation \eqref{eq: relation21} yield 
\[ c_1 (c^* K_{\Mb_{2,1}^{\E}}) = \psi + \frac{1}{10} \delta_{irr} + \frac{6}{5} \delta_{1,\emptyset}. \]
So \eqref{eq: relation canonical toledo} holds and the proof follows the same route as the one of Proposition \ref{p:M13}. 
\end{proof}

\subsection{Solution to Siu's problem: proof of Corollary \ref{c: Siu}} Lemma \ref{l: forgetful map elliptic contraction} and its proof shows that the forgetful map \( \Mb_{1,3}^{\E}\rightarrow \Mb_{1,2}^{\E}\) is an orbifold map, that can be lifted to a surjective holomorphic map \( X_{1,3} \rightarrow X_{1,2}\) between smooth finite connected orbifold coverings \(X_{1,3}\rightarrow \Mb_{1,3}^\E\) and \(X_{1,2}\rightarrow \Mb_{1,2}^\E\). Both \(X_{1,2}\) and \(X_{1,3}\) are complex hyperbolic compact manifolds as orbifold finite coverings of compact complex hyperbolic orbifolds.  So the result follows.

\appendix

\section{Meyer formula for the Toledo invariant}\label{Meyer}

\subsection{Definition of the Meyer cocycle}
Let $(V,h)$ be a Hermitian vector space of signature $(p,q)$: we denote as usual by $\U(p,q)$ its isometry group. 

Let $S$ be a surface and $\rho:\pi_1(S)\to \U(p,q)$ a homomorphism: the twisted homology group $H_1(S,V)$ is endowed with a skew-Hermitian form, composed of the intersection product together with the Hermitian form. We may write it $ih_S$ and are interested in this appendix in the signature of $h_S$.

Consider the case of a pair of pants $P$ which retracts on a graph $\Theta$. We define a homomorphism $\pi_1(P)\to \U(p,q)$ by sending the three edges of $\Theta$ respectively to $1,A,AB$ where $A,B\in \U(p,q)$. It is equivalent to send them to $A^{-1},1,B$ respectively. Then we set $\mu(A,B)=\sign(h_P)$. Standard arguments show that it is a cocycle, that is an element of $H^2(\U(p,q),\Z)$, see for instance \cite{tu}.

Explicitly, it is supported on the space $K=\{(u,v)\in V^2, (A^{-1}-1)u+(B-1)v=0\}$ with a form given by the following formula, where $(u,v),(u',v')$ are in $K$: 

$$ih_P((u,v),(u',v'))=h(u+v,(1-B)v').$$

Consider the case of $\U(1)=\U(1,0)$. One writes $A=e^{i\alpha},B=e^{i\beta}$. If $A\ne 1$ ou $B\ne 1$, the kernel $K$ is generated by $\kappa=(1-B,A^{-1}-1)$ and we compute
$$ih_P(\kappa,\kappa)=h(A^{-1}-B,(1-B)(A^{-1}-1))=8i\sin(\frac{\alpha+\beta}{2})\sin(\frac{\alpha}{2})\sin(\frac{\beta}{2}).$$

One deduces that in that case, $\mu(e^{i\alpha},e^{i\beta})=\sign(\sin(\frac{\alpha+\beta}{2})\sin(\frac{\alpha}{2})\sin(\frac{\beta}{2}))$.
If $A=B=1$ then $K=V^2$ but $h_P=0$ which gives $\mu(1,1)=0$ and agrees with the preceding formula. If we had chosen $\U(0,1)$ instead of $\U(1,0)$, we would have the opposite result.

Let us observe now what happens when we restrict this cocycle to the center of $\U(p,q)$. The representation $V$ becomes a direct sum and the contributions of the summand add with a sign, giving $\mu(e^{i\alpha}\id,e^{i\beta}\id)=(p-q)\mu(e^{i\alpha},e^{i\beta})$. 

\subsection{A relation with the Toledo class}

We would like to relate this cocycle to two well-known cocycles on $\U(p,q)$: the pull-back by the projection $\U(p,q)\to \PU(p,q)$ of the Toledo invariant $\tau\in H^2(\PU(p,q),\Q)$ and the pull-back by the determinant $\det:\U(p,q)\to \U(1)$ of the fundamental class $c\in H^2(\U(1),\Z)$. As $\mu$ is measurable and $H^2(\U(p,q),\Q)=\Hom(\pi_1(\pi_1(\U(p,q)),\Q)=\Q^2$ (see Section \ref{obstruction}), there exists $x,y\in \Q$ such that $$\mu=x\tau+yc\in H^2(\U(p,q),\Q).$$

As the composition $\U(1)\to \U(p,q)\to \U(1)$ is the map $z\mapsto z^{p+q}$, and the composition $\U(1)\to \U(p,q)\to \PU(p,q)$ is trivial, by pulling back the above equation to the center one finds $y=\frac{p-q}{p+q}$. 
 
Let us give an explicit formula for $\tau$. We recall that $\pi_1(\PU(p,q))\simeq \Z^2/(p,q)\Z$ and we define $\phi:\pi_1(\PU(p,q))\to\Q$ by the formula $\phi(x,y)=\frac{2}{p+q}(qx-py)$. 

Let $\Phi:\widetilde{\PU}(p,q)\to \R$ be the unique homogeneous quasi-morphism (continuous) verifying $\Phi(zg)=\phi(z)+\Phi(g)$ for $z\in \pi_1(\PU(p,q))$ and $\tau(A,B)=\Phi(\tilde{A}\tilde{B})-\Phi(\tilde{A})-\Phi(\tilde{B})$. 

We can give an explicit formula for $\Phi$ as in \cite{FP}. Suppose that there is an orthogonal basis $e_1,\ldots,e_n$ of $V$ with  
$$A e_j=e^{i\alpha_j}e_j,\quad h(e_j,e_j)=\epsilon_j\in\{\pm 1\}.$$
We define $\tilde{A}$ to be the path $A_t(e_j)=e^{it\alpha_j}e_j$ and obtain 
$$\Phi(\tilde{A})=\sum_{j=1}^n\alpha_j\epsilon_j-\frac{p-q}{p+q}\sum_j\alpha_j.$$

Restricted to diagonal matrices, $\Phi$ is a morphism, hence $\tau(A,B)=0$ if $A$ and $B$ are both diagonal. 

The cocycle $c$ is represented by the cyclic ordering. Precisely, $c(\alpha,\beta)=\ord(1,\alpha,\alpha\beta)=\ord(\alpha^{-1},1,\beta)=\sign(\sin(\alpha)+\sin(\beta)-\sin(\alpha+\beta))$. 

As $\sin(\alpha)+\sin(\beta)-\sin(\alpha+\beta)=4\sin(\frac{\alpha+\beta}{2})\sin(\frac{\alpha}{2})\sin(\frac{\beta}{2})$, we deduce $$c(\alpha,\beta)=\mu(e^{i\alpha},e^{i\beta}).$$

One need to establish the following for all $A,B\in \U(p,q)$:
$$\mu(A,B)=x \tau(A,B)+ \frac{p-q}{p+q} c(\det A,\det B)+dF(A,B)$$
for some function $F:\U(p,q)\to \R$.

Let us analyse this equation in restriction to diagonal matrices: 

$$\sum_{j=1}^{p+q} \epsilon_j c(\alpha_j,\beta_j)=\frac{p-q}{p+q}c(\sum \alpha_j,\sum \beta_j)+dF(A,B)$$

We observe that there exists indeed a map $f:\R/2\pi\Z\to \R$ such that
$$f(\alpha+\beta)-f(\alpha)-f(\beta)=c(\alpha,\beta).$$
We simply set $f(\alpha)=1-\frac{\alpha}{\pi}$ where $\alpha\in ]0,2\pi[$ and $f(0)=0$. 

To see it geometrically, one observe that $\pi f(\alpha)$ is the oriented area of a hyperbolic triangle with vertices $0,1,\alpha$. The Gauss-Bonnet formula gives an area equal to $\pi-\alpha$. This function is not continuous but has a nice Fourier expansion given by 
$$f(\alpha)=\frac{2}{\pi}\sum_{n>0}\frac{\sin(n\alpha)}{n}.$$

We are led to define, for $A$ diagonal with coefficients $e^{i\alpha_j}$ and signature $\epsilon_j$:
$$F(A)=\sum_{j=1}^{p+q}\epsilon_j f(\alpha_j) -\frac{p-q}{p+q}f(\sum_{j=1}^{p+q} \alpha_j)=G(A)-\frac{p-q}{p+q}f(\det A).$$

One can give an invariant formula for $G(A)$ in the spirit of the $G$-signature theorem. Suppose that $A$ has finite order or more generally, that its orbit is relatively compact. One can then find a decomposition $V=V^+\oplus V^-$ invariant by $A$ (take the barycenter of the orbit in the symmetric space). One defines then $\str(A)=\tr A|_{V^+}-\tr A_{V^-}$: it is independent of the decomposition. 
If $A$ is diagonal as above, one has $\str(A)=\sum_j \epsilon_j e^{i\alpha_j}$
We have then $$\sum_{n\in \Z^*}\frac{1}{n\pi}\str(A^n)=\sum_j i\epsilon_j f(\alpha_j)$$
hence the nice formula
$$G(A)=\sum_{n\in \Z^*}\frac{1}{in\pi}\str(A^n).$$

It remains to find the coefficient $x$. To this aim, we take a Fuchsian representation $\pi_1(S)\to \SU(1,1)$ that we send to $\SU(p,q)$ in the obvious way. 
We found that its Toledo invariant is $2g-2$. Compare with the Meyer cocyle: we decompose $V=E\oplus F$ where $E$ has signature $(1,1)$ and carries the action of $\pi_1(S)$. We have $H_1(S,E\oplus F)=H_1(S,E)\oplus H_1(S,F)$ and $H_1(S,F)=H_1(S,\C)\otimes F$. As the signature of $H_1(S,\C)$ vanishes, one sees that the factor containing $F$ does not contribute. For what concerns the factor $H_1(S,E)$, one find a positive definite Hermitian space of dimension $4g-4$, which gives $\mu=2\tau$, hence $x=2$. 

We sum up the formula that we obtained:
$$\mu(A,B)=2\tau(A,B)+dG(A,B)$$

\subsection{Application to triangle groups}
For $a,b,c$ three positive integers, we set $\Delta(a,b,c)=\langle A,B,C| ABC=A^a=B^b=C^c=1\rangle$ and consider a representation $\rho:\Delta(a,b,c)\to \PU(p,q)$. As $\Delta(a,b,c)$ is the orbifold fundamental group of a sphere $S$ with three singular points of order $a,b,c$ one can define $\int_S\rho^* \tau\in \Q$ and the aim of this section is to give an explicit formula for this rational number using the Meyer cocycle.

We observe that the virtual fundamental class of $\Delta(a,b,c)$ is given by  $[ABC]-\frac{1}{a}[A^a]-\frac{1}{b}[B^b]-\frac{1}{c}[C^c]\in H_2(\Delta(a,b,c),\Q)$, see Appendix \ref{Japonais}. We lift $A,B,C$ to $\tilde{A},\tilde{B},\tilde{C}\in \widetilde{\PU}(p,q)$ and get
$$\tau(\rho)=\Phi(\tilde A\tilde B\tilde C)-\frac{1}{a}\Phi(\tilde A^a)-\frac{1}{b}\Phi(\tilde B^b)-\frac{1}{c}\Phi(\tilde C^c).$$

Using homogeneity and setting $\tilde C=(\tilde A\tilde B)^{-1}$, we get

$$\tau(\rho)=\Phi(\tilde A\tilde B)-\Phi(\tilde A)-\Phi(\tilde B)=\tau(A,B).$$ 

Hence one can use Meyer formula which gives
$$\tau(A,B)=\frac{1}{2}\mu(A,B)-\frac{1}{2}dG(A,B)=\frac{1}{2}\left(\mu(A,B)+G(A)+G(B)+G(C)\right).$$

To analyse further $\mu(A,B)$ let us suppose that $A$ has no fixed point, so that $A^{-1}-\id$ is invers-tible. It allows to identify $K$ with $V$ by setting $u=(A^{-1}-\id)^{-1}(\id-B)v$. Hence 
\begin{eqnarray*}
ih_P(v,v')&=&h((A^{-1}-\id)^{-1}(A^{-1}-B)v,(\id-B)v')\\
&=&h((B^{-1}-\id)(A^{-1}-1)^{-1}(B-A^{-1})v,v').
\end{eqnarray*}
To sum up, $\mu(A,B)$ is the signature of the Hermitian matrix
\begin{eqnarray*}
H&=&\frac{1}{i}(B^{-1}-\id)(A^{-1}-\id)^{-1}(B-A^{-1})\\
&=&\frac{1}{i}(\id-B^{-1})(\id -A)^{-1}(\id-C^{-1})\\
&=&\frac{1}{i}(B-\id)(CB-\id)^{-1}(C-\id)
\end{eqnarray*}

It is a nice exercise to show that this matrix is indeed Hermitian: one way is to use Cayley parametrization: setting $B=(iY-1)(iY+1)^{-1},C=(iZ-1)(iZ+1)^{-1}$ with $Y$ and $Z$ Hermitian, we find $H=2(Y+Z)^{-1}$ which is again Hermitian.

\section{Toledo invariants from mapping class group presentations}\label{Japonais}

\subsection{Equivariant Hopf formula}

Let $S_{g,n}$ denote a closed oriented surface of genus $g$ with $n$ marked points. We fix a level $\ell\ge 1$ and recall that we have set $\Mod^\ell_{g,n}$ to be the quotient of $\Mod(S_{g,n})$ by the subgroup generated by $\ell$-th powers of all Dehn twists. Given colors $\lambda=(\lambda_1,\ldots,\lambda_n)$, we denote by $\rho_{g,n}^\lambda:\Mod_{g,n}^\ell\to \PU(\V(S,\lambda))$ the quantum representation. 

By construction, the invariant $\tau_{g,n}\in H^2(\Mb_{g,n},\Q)$ can be computed from $\rho_{g,n}^*\sch_1\in H^2(\Mod_{g,n}^\ell,\Q)$. The purpose of this section is to do this computation in the case $\ell=5$ starting from a presentation of the level $\ell$ mapping class group. We did it to double check our formulas for the Toledo invariants: in particular it is independent on Givental-Teleman classification and could shed further light on the properties of these invariants. 



Let $\Gamma$ be the free group generated by Dehn twists along (isotopy classes of) simple curves $\gamma\subset S\setminus P$. We denote by $T_\gamma$ both the formal and the actual Dehn twist. From the presentation $0\to R\to \Gamma \to \Mod^\ell_{g,n}\to 0$ and the Leray-Serre spectral sequence, we get the exact sequence
\begin{equation}\label{stallings}
0\to H_2(\Mod^\ell_{g,n},\Z)\to R/[\Gamma,R]\to H_1(\Gamma,\Z)\to H_1(\Mod^\ell_{g,n},\Z)\to 0.
\end{equation}

As $\Mod^\ell_{g,n}$ is generated by elements of order $\ell$, we get after tensoring by $\Q$ the exact sequence: 
\begin{equation}\label{stallingsrat}
0\to H_2(\Mod^\ell_{g,n},\Q)\to R/[\Gamma,R]\otimes \Q\overset{\ab}{\to} H_1(\Gamma,\Q)\to 0.
\end{equation}

We observe now that there is a natural action of the usual mapping class group $\Mod_{g,n}$ on $\Gamma$ given by $f.T_\gamma=T_{f(\gamma)}$. This also gives an action of $\Mod_{g,n}$ on $R$ and on $R/[\Gamma,R]$. As $f.T_\gamma=fT_\gamma f^{-1}$ in $\Mod_{g,n}$, the action on $H_2(\Mod^\ell_{g,n},\Q)$ is by conjugation, hence trivial. Using the fact that $H_1(\Mod_{g,n},\Q)=0$, we get the same sequence for co-invariants:

\begin{equation}\label{stallingscoinv}
0\to H_2(\Mod^\ell_{g,n},\Q)\to (R/[\Gamma,R]\otimes \Q)_{\Mod_{g,n}}\to H_1(\Gamma,\Q)_{\Mod_{g,n}}\to 0.
\end{equation}

The space $H_1(\Gamma,\Q)_{\Mod_{g,n}}$ is the $\Q$-vector space generated by orbits of simple closed curves. This finite set can be written as the disjoint union of the boundary curves $\{\gamma_1,\ldots,\gamma_n\}$ and a set of $\{\gamma_i\}_{i\in I}$, where $I$ parametrizes boundary divisors in $\Mb_{g,n}$. 

To go further, we need to recall a generating set of the subgroup $R$ of relations. We take it from \cite{Luo}.
\begin{Proposition}
The group $R\subset \Gamma$ of relations defining $\Mod^\ell_{g,n}$ is generated by the following elements: 
\begin{enumerate}
\item Disjointness. If $\alpha\cap \beta=\emptyset$, $D_{\alpha,\beta}=T_\alpha T_\beta (T_\beta T_\alpha)^{-1}$. 
\item Braiding. If $\alpha\cap\beta=\{pt\}$, $B_{\alpha,\beta}=T_\alpha T_\beta T_\alpha (T_\beta T_\alpha T_\beta)^{-1}$. 
\item 2-chain.  If $\alpha\cap\beta=\{pt\}$, $C_{\alpha,\beta}=(T_\alpha T_\beta)^{6}T_\gamma^{-1}$ where $\gamma$ is the boundary of a tubular neighborhood of $\alpha\cup \beta$. 
\item Lantern. If $\Sigma\subset S$ is a sphere with boundary components $\alpha,\beta,\gamma,\delta$, $L_\Sigma=T_\zeta T_\eta T_\theta (T_\alpha T_\beta T_\gamma T_\delta)^{-1}$ where $\zeta,\eta,\theta$ are as in Figure \ref{fig:lantern}. 
\item Boundary. If $\gamma_j$ is the $j$-th boundary curve, $\partial_j=T_{\gamma_j}$. 
\item $\ell$-th powers. If $i\in I$ is represented by $\gamma_i$, we set $R_i=T_{\gamma_i}^\ell$. 
\end{enumerate}
\end{Proposition}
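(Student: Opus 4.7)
The proof is essentially a citation of Luo's presentation theorem combined with the addition of the $\ell$-th power relations. My plan would proceed in two main steps.

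First, I would invoke the main result of \cite{Luo}, which establishes an infinite presentation of the ordinary mapping class group $\Mod_{g,n}$ whose generators are exactly the Dehn twists $T_\gamma$ along all isotopy classes of essential simple closed curves $\gamma \subset S\setminus P$ and whose defining relations (as a normally generating set of the kernel) are precisely the five families (1) Disjointness $D_{\alpha,\beta}$, (2) Braiding $B_{\alpha,\beta}$, (3) 2-Chain $C_{\alpha,\beta}$, (4) Lantern $L_\Sigma$, and (5) Boundary $\partial_j$. This yields a short exact sequence $1 \to R_0 \to \Gamma \to \Mod_{g,n} \to 1$ in which $R_0$ is normally generated by the listed words.

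Second, by the very definition of $\Mod^\ell_{g,n}$ we have $\Mod^\ell_{g,n}=\Mod_{g,n}/N$ where $N$ is the normal closure of $\{T_\gamma^\ell\}_\gamma$. The kernel $R$ of the surjection $\Gamma\twoheadrightarrow \Mod^\ell_{g,n}$ is therefore the normal closure in $\Gamma$ of $R_0\cup \{T_\gamma^\ell\}_\gamma$. It then remains to check that one representative $R_i=T_{\gamma_i}^\ell$ per $\Mod_{g,n}$-orbit $i\in I$ of isotopy classes of interior simple closed curves is sufficient. This reduction follows from the congruence $\tilde f\, T_\gamma^\ell\, \tilde f^{-1}\equiv T_{f(\gamma)}^\ell \pmod{R_0}$, valid for any lift $\tilde f\in\Gamma$ of $f\in \Mod_{g,n}$: indeed the analogous congruence without the $\ell$-th power is a consequence of Luo's presentation, and raising to the $\ell$-th power then gives the desired identity. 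The boundary Dehn twists themselves are already killed by the relations $\partial_j$, so no separate $\ell$-th power relation is needed for them, consistent with the indexing of (6) by boundary divisors rather than by all orbits.

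Combining these two steps produces the list (1)--(6) of the proposition. The only genuinely nontrivial input is Luo's theorem itself, which is also the main obstacle one would have to face if a self-contained proof were wanted: it requires showing that the infinite, $\Mod_{g,n}$-equivariant family (1)--(5) is equivalent to one of the classical finite presentations of Matsumoto, Wajnryb, or Gervais, the essential tool being the connectivity properties of an appropriate arc/curve complex on which $\Mod_{g,n}$ acts transitively on vertices. Since the paper explicitly cites \cite{Luo}, in practice I would simply defer to that reference and restrict my exposition to the elementary second step above.
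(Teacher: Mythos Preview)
Your proposal is correct and matches the paper's approach: the paper gives no proof at all for this proposition, simply prefacing it with ``We take it from \cite{Luo}'' and stating the result. Your two-step argument (invoking Luo's presentation of $\Mod_{g,n}$ and then adjoining the $\ell$-th power relations, with the orbit reduction via conjugation) is exactly the elementary justification one would supply, and is more detailed than what the paper itself provides.
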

\begin{figure}
    \centering
    \def\svgwidth{5cm}
    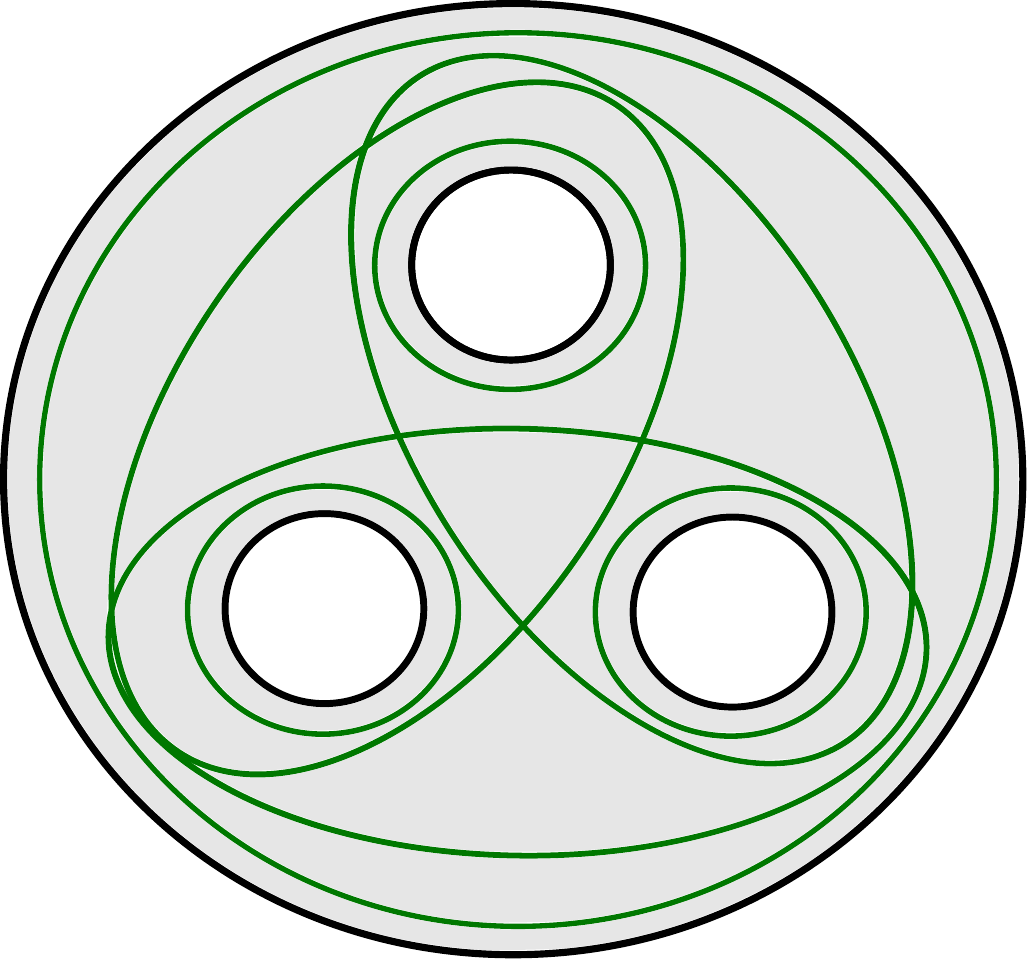
    \caption{Lantern relation}
    \label{fig:lantern}
\end{figure}

It remains to give an expression of the classes $\psi_j,\lambda_1,\delta_i$ as linear forms on $R$. These formulas are spread out in the literature (mainly \cite{FM,Endo_Nagami,MR}) and can be easily guessed, however we did not find it easy to justify them rigorously. As it would take too much space in this appendix and we used them only for double checking, we skip this justification here and give here the result. The table shows the value of each class evaluated on each type of relation. 
$$
\begin{tabular}{|c|c|c|c|c|c|c|}
\hline
& $D$ & $B$ & $C$ & $L$ & $\partial_j$ & $R_i$ \\
\hline
$\psi_k$ & $0$ & $0$ & $0$ & $0$ & $-\delta_{jk}$ & $0$ \\
\hline
$\lambda_1$ & $0$ & $0$ & $-1$ & $0$ & $0$ &$0$\\
\hline
$\delta_k$ & $0$ & $0$ & $0$ & $0$ & $0$ & $\ell \delta_{ik}$\\
\hline
\end{tabular}$$

\subsection{Computation of the Toledo invariant}

Let us consider the case of the $\SO_3$ modular functor given in Section \ref{examples} with $A=e^{i\pi/5}$ that we denote simply by $\V$. As the set of colors is $\Lambda=\{0,2\}$ and because coloring a boundary component with $0$ amounts to fill it with a disc, we can suppose that all marked points are colored with $2$.

We fix now a surface $S=S_{g,n}$ with marked points (and an immaterial Lagrangian). We denote by $(p_{g,n},q_{g,n})$ the signature of $\V(S_{g,n})$ and sometimes abbreviate in $(p,q)$. We also set $d_{g,n}=p_{g,n}+q_{g,n}$ and $\sigma_{g,n}=p_{g,n}-q_{g,n}$. 

Given a simple curve $\gamma\subset S_{g,n}$, we set $\rho(T_\gamma)=\V(T_\gamma,0)\in \U(\V(S))$. By the axiom MF2, there is a decomposition $\V(S)=\V(S)_0\oplus \V(S)_2$. This decomposition is also a decomposition of eigenspaces for $\rho(T_\gamma)$. It acts by multiplication by $1$ and $q^{-1}$ on each factor, where $q=A^2=e^{2i\pi/5}$. 

It is natural to lift $\rho(T_\gamma)$ to the path $\rho^t(T_\gamma)\in \PU (\V(S))$ of transformations acting on each factor by $1$ and $\exp(-2i\pi t/5)$ respectively. This gives an equivariant lift $\rho^t:\Gamma\to \widetilde{\PU}(\V(S))$ which, by restriction to $R$ gives the Toledo class we are looking for. We denote by $\tau:R\to\Q$ the map defined by $\tau(r)=\phi(\rho^t(r))$ and compute in the following sections the image of all generators in $R$. 

\subsubsection{The easy relations: $D,B,\partial_j,R_i$}
It is not hard to show that $\tau(D)=\tau(B)=0$. We skip it to save space. 

Another easy computation is $\tau(\partial_j)$.
Let $\gamma_j$ be a simple curve surrounding the $j$-th boundary point. The matrix $\rho^t(T_\gamma)$ acts on $\V(S)$ by $q^{-t}\id$. This is a trivial path in $\PU(\V(S)$ giving $\rho^t(\partial_x))=1$ and $\tau(\partial_x)=0$.

We choose a curve $\gamma$ in the class $i\in I$ and wish to compute $\rho^t(R_i)$. We compute that $\rho^t(T_\gamma)^5$ acts on $\V(S)_2$ by $\exp(-2i\pi t)$. Denoting by $(p_i,q_i)$ the signature of the Hermitian form on $\V(S\setminus \gamma,2,2)$ and reminding that as $\epsilon_1=-1$  this signature is the opposite of the signature of $\V(S)_2\subset \V(S)$, we find that $$\rho^t(R_i)=(-q_i,-p_i)\in \Z\oplus \Z=\pi_1 U(\V(S,P))$$
which gives
$$\tau(R_i)=\frac{2}{p+q}(p_ip-q_iq).$$

\subsubsection{2-chain}
Take $\alpha,\beta$ two simple cirves intersecting once and denote by $\gamma$ the boundary of a tubular neighborhood $T$ of $\alpha\cup\beta$. Using MF2, we can decompose along $\gamma$ and write
$$\V(S)=\bigoplus_{\epsilon\in \Lambda}\V(T,\epsilon)\otimes \V(S^2,\epsilon,\epsilon)\otimes \V(S\setminus T,\epsilon).$$

The middle factor just serves for adjusting the sign and the right factor is inert as $T_\alpha$ and $T_\beta$ only act on the first factor. 

Consider first the factor $\V(T,2)$ which is $1$-dimensional and has negative sign. We find that $T_\alpha,T_\beta$ and $T_\gamma$ act by $q^{-1}$ hence $\rho^t(C_2)$ acts by $e^{-11\cdot 2i\pi t/5}$. 

We cancel this factor by multiplying that action of $\rho^t(C_2)$ on the factor $\V(T,0)$ by $e^{11\cdot 2i\pi t/5}$.

As $\V(T,0)$ has signature $(2,0)$, it suffices to compute the determinant $\det \rho^t((\alpha\beta)^6\gamma^{-1})=\exp(-12\cdot 2i\pi t/5)$. With the compensation $e^{22\cdot 2i\pi/5}$ we get that $\rho^t(C_2)=2\in \pi_1 U(2)$. Denoting by $(p_{g-1,n},q_{g-1,n})$ the signature of $\V(S\setminus T,0)$, we get $\rho^t(C_2)=(2p_{g-1,n},2q_{g-1,n})\in \pi_1 U(\V(S))$ hence 
$$\tau(C_2)=\frac{4}{p_{g,n}+q_{g,n}}(p_{g-1,n}q_{g,n}-q_{g-1,n}p_{g,n})$$

\subsubsection{Lantern}
Fix a 4 times punctured sphere $\Sigma\subset S\setminus P$: recall that we defined $\rho^t(L_\Sigma)=\rho^t(T_\zeta T_\eta T_\theta(T_\alpha T_\beta T_\gamma T_\delta)^{-1})$. 
We first decompose (forgetting the sign factor)
$$\V(S,P)=\bigoplus_{\epsilon\in \Lambda^4}\V(\Sigma,\epsilon)\otimes \V(S\setminus\Sigma,P,\epsilon)$$
Again one can write $\rho(L_\Sigma)=\sum_{\epsilon} \rho^t(L_\Sigma)_\epsilon\otimes \id$ and we are reduced to compute the terms
$\rho^t(L_\Sigma)_\epsilon$ individually. 
We compute directly 
$$\rho(L_\Sigma)_{0000}=1, \rho(L_\Sigma)_{0002}=0, \rho^t(L_\Sigma)_{0022}=(e^{-2i\pi t/5})^2(e^{-2i\pi t/5})^{-2}=1,$$
$$\rho^t(L_\Sigma)_{0222}=(e^{-2i\pi t/5})^3(e^{-2i\pi t/5})^{-3}=1.$$ 
Hence, the unique non-trivial contribution is 

$$\rho^t (L_\Sigma)_{2^4}=(x,y)\in \Z\oplus\Z=\pi_1 U(1,1).$$ 

We have $\det(\rho^t(L_\Sigma)_{1^4})=(e^{-2i\pi t/5})^3(e^{-2i\pi t/5})^{-8}=e^{2i\pi t}$. From this we get $x+y=1$. To compute $x-y$ we analyze $\rho^t(L_\Sigma)_{1111}$ in $\pi_1 \PU(1,1)$. The boundary factors do not contribute and we can deform the centers of $\rho(T_\zeta),\rho(T_\eta),\rho(T_\theta)$ until they coincide, yielding three time the matrix with diagonal entries $1,e^{-2i\pi t/3}$. This gives $x-y=1$ hence $(x,y)=(1,0)$. 
Denote by $(p_\Sigma,q_\Sigma)$ the signature of the space $\V(S\setminus \Sigma,2^4)$. The inclusion $U(1,1)\to U(p_\Sigma+q_\Sigma,p_\Sigma+q_\Sigma)$ maps $(1,0)$ to $(p_\Sigma,q_\Sigma)$. Pushing it to $\U(p,q)$ gives 
$$\tau(L_\Sigma)=\frac{2}{p+q}(p_\Sigma q-q_\Sigma p)$$

\subsubsection{Solving the linear system}

Let $\mu=(\mu_i)_{i\in \{1,\ldots,n\}\amalg I}$ be a collection of rational numbers indexed by the topological types of simple curves in $S_{g,n}$. We can think of $\mu$ as a $\Mod(S_{g,n})$-invariant linear map $H_1(\Gamma,\Z)\to \Q$, and composing with the natural map $R/[\Gamma,R]\to H_1(\Gamma,\Z)$ as a map $\mu:R/[\Gamma,R]\to \Q$. This map sends a relation $r$ to the sum of the signed values of $\mu$ on the Dehn twists contained in $r$. 
In order to compute the cohomology class $\tau$ in terms of the standard generators of $\Mb_{g,n}$, we need to find $a,b,c,\mu$ such that 

$$\tau = a \lambda_1+\sum_{i\in I} b_i \delta_i+\sum_{j=1}^n c_j \psi_j + \mu.$$
For symmetry reasons, $c_j$ is independent of $j$ so that we write it $c_j=c$.
We first observe that all classes vanish on the relations $D_{\alpha,\beta}$ and $B_{\alpha,\beta}$ so that they give no information. Next, as the classes $\lambda_1,\delta_i,\psi_x$ vanish on the lantern relation, we get $\tau(L_\Sigma)=\mu(L_\Sigma)$. This gives a linear system allowing to compute $\mu$.

\begin{Lemma}\label{lem:crible}
Let $\sigma,d:I\to \Z$ be defined respectively by

$$\sigma_i=\sign \V(S\setminus\gamma_i,2,2)\text{ and }d_i=\dim \V(S\setminus\gamma_i,2,2)$$
where $\gamma_i$ is a simple curve representing the topological type $i\in I$. For $j\in \{1,\ldots,n\}$ we set $\sigma_j=-\sigma_{g,n}$ and $d_j=d_{g,n}$. Then, $$\sigma(L_\Sigma)=3\sign \V(S\setminus\Sigma,P,2^4)\text{ and }d(L_\Sigma)=-5 \dim \V(S\setminus \Sigma,P,2^4).$$
\end{Lemma}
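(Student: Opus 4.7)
Plan: The strategy is to reduce both identities to a purely local computation inside the 4-holed sphere $\Sigma$, using the gluing axiom MF2 to express every space $\V(S\setminus c, 2, 2)$ in terms of ``bystander'' pieces $\V(W, 2^n, \epsilon)$ where $W = \overline{S\setminus\Sigma}$.

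Step 1 (global to local decomposition). Iterating MF2 along the four components of $\partial\Sigma = \alpha\cup\beta\cup\gamma\cup\delta$ gives the orthogonal decomposition
\[
\V(S, 2^n) \simeq \bigoplus_{\epsilon \in \{0,2\}^4} \V(\Sigma, \epsilon) \otimes \V(W, 2^n, \epsilon) \otimes \bigotimes_{c \in \partial\Sigma} \V(S^2, \epsilon_c, \epsilon_c),
\]
where each $\V(S^2, \epsilon_c, \epsilon_c)$ is one-dimensional of sign $-1$ if $\epsilon_c = 2$ and $+1$ otherwise. For each of the seven lantern curves $c$, a parallel decomposition of $\V(S\setminus c, 2, 2)$ is obtained: cutting $c \in \partial\Sigma$ removes the factor $\V(S^2, \epsilon_c, \epsilon_c)$ and restricts the sum to $\epsilon_c = 2$; cutting $c \in \{\zeta, \eta, \theta\}$ replaces $\V(\Sigma, \epsilon)$ by the tensor product $\V(S^2, \epsilon_a, \epsilon_b, 2) \otimes \V(S^2, 2, \epsilon_{a'}, \epsilon_{b'})$ corresponding to the partition of $\partial\Sigma$ induced by $c$.

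Step 2 (bookkeeping). Writing $a_\epsilon, A_\epsilon$ for the dimension and signature of $\V(\Sigma, \epsilon)$, $b_\epsilon, B_\epsilon$ for those of $\V(W, 2^n, \epsilon)$, and $s_{xyz}$ for the sign of $\V(S^2, x, y, z) \in \{-1, 0, 1\}$, the alternating sums dictated by the lantern word become
\[
d(L_\Sigma) = \sum_\epsilon K(\epsilon)\, b_\epsilon, \qquad \sigma(L_\Sigma) = \sum_\epsilon K'(\epsilon)\, B_\epsilon,
\]
for coefficients $K(\epsilon), K'(\epsilon)$ depending only on the combinatorics of $\Sigma$ and on the sign factor $(-1)^{n_2(\epsilon)}$, where $n_2(\epsilon)$ is the number of $2$'s in $\epsilon$. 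The lemma thus reduces to checking that $K(\epsilon)$ and $K'(\epsilon)$ vanish unless $\epsilon = (2,2,2,2)$, with values $-5$ and $3$ there.

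Step 3 (enumeration). Only five orbits need to be checked, indexed by $n_2(\epsilon) \in \{0, 1, 2, 3, 4\}$, using the Fibonacci values $a_\epsilon = 1, 0, 1, 1, 2$ and $A_\epsilon = 1, 0, -1, -1, 0$ respectively, together with $s_{000} = 1$, $s_{022} = s_{222} = -1$. For example, at $\epsilon = (2,2,2,2)$ each split coefficient equals $s_{222}^2 = 1$ while $n_2(\epsilon) a_\epsilon = 4 \cdot 2 = 8$, giving $K = 3 - 8 = -5$; at $\epsilon = (2,2,0,0)$ the three split contributions yield $s_{222} s_{002} + s_{202}^2 + s_{202}^2 = 2$, which exactly cancels $n_2(\epsilon) a_\epsilon = 2$.

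Main obstacle: The only delicate point is the sign bookkeeping in Step 2. Cutting a boundary curve $c \in \partial\Sigma$ removes an auxiliary $\V(S^2, 2, 2)$, shifting the global sign by $\epsilon_2 = -1$, so that $\sigma_c$ carries a $(-1)^{n_2(\epsilon) - 1}$ factor; for an interior curve $c \in \{\zeta, \eta, \theta\}$ no such removal occurs and the sign factor is $(-1)^{n_2(\epsilon)}$. It is precisely this mismatch between the two types of curves that makes the alternating sum $\sigma(L_\Sigma)$ collapse to a single multiple of $B_{(2,2,2,2)}$. The analogous (sign-free) computation for dimensions is easier.
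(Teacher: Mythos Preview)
Your proposal is correct and is precisely the kind of brute-force verification the paper alludes to but omits (the authors write only ``we skip the proof of this lemma that we obtained by brute force''). Your reduction via iterated MF2 along $\partial\Sigma$ to a five-case check on $n_2(\epsilon)$ is the natural organization; the sign bookkeeping you flag in the obstacle paragraph is exactly the subtle point, and your computations at $n_2=2,4$ (and implicitly the others) are right. In short, you have supplied the missing argument; the paper's ``proof'' contains no competing approach to compare against.
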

\begin{proof}
We skip the proof of this lemma that we obtained by brute force. It would be interesting to interpret the coefficients $3$ and $-5$ in terms of the Frobenius algebra structure. 
\end{proof}

We write $d_\Sigma=p_\Sigma+q_\Sigma$ and $\sigma_\Sigma=p_\Sigma-q_\Sigma$ so that $\tau(L_\Sigma)= \frac{q-p}{q+p}d_\Sigma+\sigma_\Sigma$. The solution for $\mu$ is then 
$$\mu=\frac{\sigma}{3}+\frac{d}{5}\cdot\frac{p-q}{p+q}.$$

This already gives the coefficient of the $\psi$ classes because $\tau(\partial_x)=0=-c_x+\mu_x$. As $\sigma_x=q-p$ and $d_x=p+q$ this gives 

$$c=-\frac{2}{15}\sigma_{g,n}.$$

Next, we compute the boundary divisors: $\tau(\frac{1}{5}R_i)=\frac{1}{5}\frac{p-q}{q+p}d_i+\frac{\sigma_i}{5}=b_i+\mu_i$ hence 

$$b_i=-\frac{2}{15}\sigma_i$$

Finally we compute the coefficient of $\lambda_1$ by evaluating the relation $C_2$: 
$$\tau(C_2)=2\sigma_{g-1,n}-2d_{g-1,n}\frac{\sigma_{g,n}}{d_{g,n}}=-a+12\mu_{\irr}-\mu_{1,\emptyset}$$
from which we get 
$$a=\big( -2\sigma_{g-1,n}+4\sigma_{\irr}-\frac{1}{3}\sigma_{1,\emptyset}\big)+\frac{\sigma_{g,n}}{d_{g,n}}\big(2d_{g-1,n}+\frac{12}{5}d_{\irr}-\frac{1}{5}d_{1\emptyset}\big)$$

Using the axiom MF2 we get the following relations removing $n$ from the notation:
$$\begin{cases} d_{\irr}+d_{g-1}=d_g\\ d_{1\emptyset}+2d_{g-1}=d_g\end{cases}\quad
\begin{cases} -\sigma_{\irr}+\sigma_{g-1}=\sigma_g\\-\sigma_{1\emptyset}+2\sigma_{g-1}=\sigma_g\end{cases}\quad \sigma_{g,n+1}=\sigma_{g,n}-3\sigma_{g-1,n}$$

This gives finally 
$$a=\frac{92}{15}\sigma_{g,n}+\frac{4}{3}\sigma_{g-1,n}=-\frac{46}{45}\sigma_{g,n}-\frac{4}{9}\sigma_{g,n+1}$$

We invite the reader to check that these formulas are compatible with the ones given in Section \ref{ss: R-matrix level 5} using the formula $\tilde{\kappa}_1=12\lambda_1-\delta$, see Section \ref{canonical_bundle}.

\Addresses
\end{document}